\documentclass
 {amsart}
\usepackage{newunicodechar}
\newunicodechar{，}{,}

\usepackage{amsmath,cite,amsfonts,amssymb,amsthm,amscd,latexsym,tikz-cd}
\usetikzlibrary{arrows.meta}

\usepackage{array}
\usepackage{mathrsfs}
\usepackage{color}
\usepackage{comment}
 \usepackage{lmodern}
\usepackage[T1]{fontenc}
\usepackage{a4wide}
\usepackage{amsfonts}
\usepackage{amssymb,dsfont}
\usepackage{xcolor}
\usepackage{tasks}
\usepackage{longtable}

\newtheorem{thm}{Theorem}[section]
\newtheorem{lem}[thm]{Lemma}
\newtheorem{cor}[thm]{Corollary}
\newtheorem{pro}[thm]{Proposition}

\theoremstyle{definition}
\newtheorem{defn}{Definition}[section]

\newtheorem{example}{Example}[section]
\theoremstyle{remark}
\newtheorem{rmk}{Remark}[section]

\begin{document}

\title{Poisson $n$-Lie algebras:  constructions \\ and the structure of solvable algebras}

\author[Xinru Cao]{Xinru Cao}
\address{Xinru Cao \newline \indent School of Mathematics and Statistics, 
Northeast Normal University, Changchun,  130024, China.}
\email{{\tt caoxinru@nenu.edu.cn}}

\author[Zafar Normatov]{Zafar Normatov}
\address{Zafar Normatov \newline \indent School of Mathematics, Jilin University, Changchun, 130012, China,\newline \indent
Institute of Mathematics, Uzbekistan Academy of Sciences, Tashkent, 100174, Uzbekistan} \email{z.normatov@inbox.ru}

\author[Bakhrom Omirov]{Bakhrom Omirov}
\address{Bakhrom A. Omirov. \newline \indent 
Institute for Advanced Study in Mathematics, Harbin Institute of Technology, Harbin 150001,\newline \indent 
Suzhou Research Institute, Harbin Institute of Technology, Suzhou 215104, China.}
\email{{\tt omirovb@mail.ru}}

\begin{abstract} 
In this paper, we develop a construction of Poisson $n$-Lie algebras that generalizes the Jacobian $n$-Lie construction. Using the Grassmann--Pl\"ucker relations, we derive necessary and sufficient conditions under which the resulting bracket defines a Poisson $n$-Lie algebra. We also prove that suitable quotients of tensor products of Poisson algebras carry natural Poisson $n$-Lie structures. Conversely, we give a tensor-type procedure that associates a Poisson algebra to a given Poisson $n$-Lie algebra. The quotient and converse constructions thus provide two systematic methods for relating Poisson algebras to Poisson $n$-Lie algebras. 

We further establish analogues of Engel's theorem and Lie's theorem and characterize solvability and nilpotency of Poisson $n$-Lie algebras in terms of their underlying associative and $n$-Lie structures. We introduce hypo-nilpotent ideals and investigate maximal such ideals in finite-dimensional solvable Poisson $n$-Lie algebras. Finally, we prove that the generalized eigenspaces of multiplication operators are ideals.

\end{abstract}

\subjclass[2020]{17A42, 17B30, 17B63.}

\keywords{Poisson $n$-Lie algebra, $n$-Lie algebra, $n$-Lie algebra of Jacobians, solvable algebra, nilpotent algebra.}

\maketitle

\numberwithin{equation}{section}


\section{Introduction}

Motivated by Kurosh's general notion of an $\Omega$-algebra~\cite{Kurosh}, Filippov introduced $n$-Lie algebras as higher-arity generalizations of Lie algebras~\cite{Filippov1}; these structures are also commonly referred to as Filippov algebras. In the infinite-dimensional setting, he constructed fundamental examples whose brackets are defined by Jacobian determinants~\cite{Filippov2}.

A different motivation for the theory originates in Nambu's pioneering work~\cite{Nambu} on a generalization of classical Hamiltonian mechanics. Motivated by Liouville's theorem, Nambu extended Hamiltonian dynamics to a three-dimensional phase space in which the equations of motion are governed by two Hamiltonian functions and a ternary bracket involving three canonical variables.

Takhtajan subsequently developed the foundations of a canonical formalism for Nambu mechanics~\cite{Takh}, thereby placing Nambu's generalization of Hamiltonian mechanics on a firm geometric and algebraic footing. In this framework, the Nambu bracket leads naturally to the notion of a Nambu--Poisson manifold: the classical binary Poisson bracket on observables is replaced by an $n$-ary multilinear bracket, where $n\geq 3$. The resulting notion of a Nambu algebra of order $n$ provides a natural higher-arity generalization of a Lie algebra and is equivalent to that of an $n$-Lie algebra. More generally, the notion of a Nambu--Leibniz algebra of order $n$ was introduced by relaxing skew-symmetry. The same structure was subsequently studied in~\cite{Casas} under the name Leibniz $n$-algebra, reflecting the fact that, when $n=2$, it reduces to a Leibniz algebra in the sense of Loday~\cite{Loday}.

Since their introduction, $n$-Lie algebras and related higher-arity algebraic structures have attracted considerable attention. Their theory has been developed extensively in, among others,~\cite{Goze,Kasymov,Ling,Makhlouf,Pozhidaev1,Pozhidaev2} and the references therein.

A fundamental example of an $n$-Lie algebra is the Jacobian $n$-Lie algebra, which naturally carries a compatible Poisson $n$-Lie algebra structure. In fact, Cantarini and Kac~\cite{CK2} proved that, for $n>2$, every simple linearly compact Poisson $n$-Lie algebra is isomorphic to $\mathbb{F}[[x_1,\ldots,x_n]]$ equipped with the $n$-ary bracket defined by the Jacobian determinant. When $n=2$, the same construction yields an ordinary Poisson algebra.

A comprehensive overview of various $n$-ary generalizations of Lie algebras, including $n$-Lie algebras, $n$-Leibniz algebras, and generalized Poisson $n$-Lie structures, together with their applications to mathematical physics and cohomology, is presented in~\cite{De}. We also refer to~\cite{Mashurov} for notable results concerning simple Poisson $n$-Lie algebras.

In the present paper, we generalize the Jacobian construction by adjoining an arbitrary number $m\geq 1$ of additional columns whose entries are fixed elements of a commutative associative algebra. Using the Grassmann--Pl\"ucker relations for the complementary minors, we derive necessary and sufficient algebraic conditions under which the resulting determinant bracket defines a Poisson $n$-Lie algebra. This construction provides a systematic method for producing new examples of Poisson $n$-Lie algebras.

We further prove that, if $P$ is a Poisson $n$-Lie algebra and $B$ is a commutative associative algebra, then the tensor product $P\otimes B$ carries a natural Poisson $n$-Lie algebra structure.

We next consider the tensor product of two Poisson algebras equipped with an $n$-ary bracket obtained by iterating their underlying binary Poisson brackets in the manner introduced in~\cite{Xu}. By factoring out the ideal generated by the elements measuring the failure of skew-symmetry, we obtain a quotient carrying a Poisson $n$-Lie algebra structure. In particular, given a Poisson algebra, the corresponding quotient of its tensor product with an isomorphic copy is a Poisson $n$-Lie algebra. Conversely, we give a tensor-type procedure for constructing a Poisson algebra from a given Poisson $n$-Lie algebra. Taken together, these constructions provide two systematic methods for relating Poisson algebras to Poisson $n$-Lie algebras.

We also investigate the structure of solvable Poisson $n$-Lie algebras. In particular, we establish analogues of Engel's theorem and Lie's theorem in this setting and derive several consequences. We further characterize solvability in terms of the underlying $n$-Lie and commutative associative structures. In addition, we introduce hypo-nilpotent ideals and study maximal hypo-nilpotent ideals in finite-dimensional solvable Poisson $n$-Lie algebras.

Finally, we prove that the generalized eigenspaces of multiplication operators in commutative associative algebras are ideals. We also obtain results concerning idempotent elements that extend naturally from ordinary Poisson algebras to the $n$-ary setting.
\section{Preliminaries}

In this section, we recall the definitions and examples that will be used throughout the paper.

A Poisson $n$-Lie algebra carries both  an associative commutative product and an $n$-Lie bracket. We therefore begin by recalling Filippov's definition of an $n$-Lie algebra~\cite{Filippov1}.

\begin{defn}
Let $\mathbb{F}$ be a field. An $\mathbb{F}$-vector space $\mathcal{L}$ equipped with an $n$-linear skew-symmetric bracket $[-,\ldots,-]$ is called an \emph{$n$-Lie algebra} if it satisfies the fundamental identity
\begin{equation}\label{eq2.1}
[x_1,\ldots,x_{n-1},[y_1,\ldots,y_n]]=\sum_{i=1}^{n}[y_1,\ldots,y_{i-1},[x_1,\ldots,x_{n-1},y_i],y_{i+1},\ldots,y_n]
\end{equation}
for all $x_1,\ldots,x_{n-1},y_1,\ldots,y_n\in\mathcal{L}$.
\end{defn}

For examples of $n$-Lie algebras, we refer the reader to~\cite{CK1,Dj,Filippov2,newexample}. Among the standard examples, the following construction is particularly relevant to the present paper.

\begin{example}\label{exam2.1} 
Let $(\mathcal{A}, \cdot)$ be a commutative associative algebra, and let $d_{1}, \dots, d_{n}$ be pairwise commuting derivations of $\mathcal{A}$. We define an $n$-ary bracket on $\mathcal{A}$ by
$$[x_{1}, \ldots, x_{n}]:= \operatorname{Jac}(x_{1}, \ldots, x_{n})=\left|
\begin{array}{ccc}
d_{1}(x_{1}) & \cdots & d_{1}(x_{n}) \\
\vdots & \ddots & \vdots \\
d_{n}(x_{1}) & \cdots & d_{n}(x_{n})
\end{array}
\right| \quad \mbox{for all} \quad x_{1},\ldots,x_{n} \in \mathcal{A}.$$
Then $(\mathcal{A},[-,\ldots,-])$ is an $n$-Lie algebra, called the \emph{Jacobian $n$-Lie algebra} associated with $d_1,\ldots,d_n$. Moreover, its bracket satisfies the Leibniz rule
\begin{equation}\label{eq2.2}
[y\cdot z,x_2,\ldots,x_n]=y\cdot[z,x_2,\ldots,x_n]+z\cdot[y,x_2,\ldots,x_n]
\end{equation}
for all $y,z,x_2,\ldots,x_n\in\mathcal{A}$.
\end{example}

We now recall the definition of a Poisson $n$-Lie algebra.

\begin{defn}
An algebra $(\mathcal{P},\cdot,[-,\ldots,-])$ is called a \emph{Poisson $n$-Lie algebra} if $(\mathcal{P},\cdot)$ is a commutative associative algebra, $(\mathcal{P},[-,\ldots,-])$ is an $n$-Lie algebra, and the bracket satisfies the Leibniz rule \eqref{eq2.2} with respect to the associative product.
\end{defn}

Consequently, Example~\ref{exam2.1} defines a Poisson $n$-Lie algebra. The Jacobian construction was generalized in~\cite{newexample} to produce further examples. More precisely, one starts with  a commutative associative algebra $\mathcal{A}$ equipped with $n+1$ pairwise commuting derivations and defines a skew-symmetric $n$-ary bracket by adjoining to the corresponding Jacobian matrix a fixed additional column whose entries belong to $\mathcal{A}$. Necessary and sufficient conditions on the entries of this column ensuring that the resulting bracket satisfies the fundamental identity \eqref{eq2.1} were established. Under these conditions, the construction yields new $n$-Lie algebras that are naturally endowed with compatible Poisson $n$-Lie algebra structures.

We conclude the preliminaries with a Grassmann--Pl\"ucker relation that will be used in the determinant construction below.

Assume that \(E\) is a spanning subset of the vector space
\(\mathbb{F}^{n-1}\), and regard the elements of \(E\) as row vectors. Then the \emph{Grassmann--Plücker relation} takes the form 
\begin{equation}\label{eq2.3}
\sum_{k=1}^{n}(-1)^{k-1}
\det\bigl(c_{1},\ldots,\widehat{c_{k}},\ldots,c_{n}\bigr)
\det\bigl(c_{k},d_{3},\ldots,d_{n}\bigr)=0
\end{equation}
for all $c_{1},\ldots,c_{n},d_{3},\ldots,d_{n}\in E.$ 
See \cite{Plucker1,Plucker2} for further background on the
Grassmann--Plücker relations.

\section{Determinant-type Construction of Poisson $n$-Lie Algebras}\label{sec:determinantal-construction}

Let $n\geq 2$ and $m\geq 1$, and let $(\mathcal A,\cdot)$ be a commutative associative algebra over a field $\mathbb F$ of characteristic zero. Suppose that $\mathcal A$ is equipped with pairwise commuting $\mathbb F$-linear derivations 
$$d_1,\ldots,d_{n+m}\colon\mathcal A\longrightarrow\mathcal A.$$
Fix a matrix $A=(a_{r,s})\in M_{n+m,m}(\mathcal A)$. For
$x_1,\ldots,x_n\in\mathcal A$, define
\begin{equation}\label{eq:determinant-bracket}
[x_1,\ldots,x_n]=
\det
\begin{pmatrix}
d_1(x_1) & \cdots & d_1(x_n) & a_{1,1} & \cdots & a_{1,m}\\
d_2(x_1) & \cdots & d_2(x_n) & a_{2,1} & \cdots & a_{2,m}\\
\vdots & \vdots & \vdots & \vdots & \ddots & \vdots\\
d_{n+m}(x_1) & \cdots & d_{n+m}(x_n)
& a_{n+m,1} & \cdots & a_{n+m,m}
\end{pmatrix}.
\end{equation}

Write $N_{n+m}=\{1,\ldots,n+m\}$. For an $n$-element subset $I=\{i_1<\cdots<i_n\}\subseteq N_{n+m}$, let $I^c=N_{n+m}\setminus I$ and define 
$$\pi^I:=(-1)^{i_1+\cdots+i_n+\frac{n(n+1)}2}\det(A_{I^c}),$$
where $A_{I^c}$ is the $m\times m$ submatrix of $A$ whose rows are indexed by $I^c$ and written in increasing order. We extend this notation to ordered indices by alternation. Thus, for every $\sigma\in S_n$, set
$$\pi^{(i_{\sigma(1)},\ldots,i_{\sigma(n)})}:=\operatorname{sgn}(\sigma)\pi^I,$$
and set $\pi^{(i_1,\ldots,i_n)}=0$ whenever two indices coincide. For an ordered $n$-tuple
$\mathbf i=(i_1,\ldots,i_n)\in N_{n+m}^n$, we abbreviate $\pi^{(i_1,\ldots,i_n)}$ to $\pi^{\mathbf i}$.

The definition of $\pi$ includes all signs from expanding the determinant and changing the order of the indices. If $I=\{i_1 < \cdots < i_n\}$, then
\begin{align*}
[x_1,\ldots,x_n]
&=\sum_{|I|=n}\pi^I\det\bigl(d_{i_p}(x_q)\bigr)_{1\le p,q\le n}\\
&=\sum_{|I|=n}\sum_{\sigma\in S_n}\operatorname{sgn}(\sigma)\pi^I\prod_{p=1}^{n}d_{i_{\sigma(p)}}(x_p).
\end{align*}
For a fixed set $I$, the ordered $n$-tuples $(i_{\sigma(1)},\ldots, i_{\sigma(n)})$, with $\sigma\in S_n$, are exactly the orderings of the elements of $I$. 
Since each ordered tuple with distinct entries occurs exactly once and tuples with repeated entries may be added without changing the sum because their coordinates are zero, we therefore obtain the single formula
\begin{equation}\label{expand}
[x_1,\ldots,x_n]=\sum_{\mathbf i\in N_{n+m}^n}\pi^{\mathbf i}\prod_{p=1}^{n}d_{i_p}(x_p).
\end{equation}

The following identity for the signed complementary minors is a consequence of the Grassmann--Pl\"ucker relations. 

\begin{pro} 
For all $\mathbf i=(i_1,\ldots,i_n)$ and $\mathbf j=(j_1,\ldots,j_n)$ in $N_{n+m}^n$, one has
\begin{equation}\label{eq:plucker-exchange}
\sum_{k=1}^{n}\pi^{(j_1,\ldots,j_{k-1},i_1,j_{k+1},\ldots,j_n)}\pi^{(j_k,i_2,\ldots,i_n)}=\pi^{\mathbf i}\pi^{\mathbf j}.
\end{equation}
\end{pro}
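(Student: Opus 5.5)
The idea is to read $\pi$ as an alternating $n$-form, pass to a quotient space where it becomes an ordinary $n\times n$ determinant, and then apply Cramer's rule. Concretely, for column vectors $v_1,\ldots,v_n\in\mathcal A^{n+m}$ set
$$\pi(v_1,\ldots,v_n):=\det\bigl(v_1,\ldots,v_n\mid A\bigr).$$
This is multilinear and alternating in the $v$'s. Laplace expansion along the first $n$ columns gives $\pi(e_{i_1},\ldots,e_{i_n})=\pi^{\mathbf i}$ for every ordered tuple $\mathbf i$ (repeated indices give $0$). This is exactly the sign bookkeeping already used to derive \eqref{expand}. The claim \eqref{eq:plucker-exchange} is therefore the identity
$$\sum_{k=1}^n \pi(e_{j_1},\ldots,\underset{k}{e_{i_1}},\ldots,e_{j_n})\,\pi(e_{j_k},e_{i_2},\ldots,e_{i_n})=\pi(e_{i_1},\ldots,e_{i_n})\,\pi(e_{j_1},\ldots,e_{j_n}).$$
Both sides are integer polynomials in the entries $a_{r,s}$. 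So it suffices to prove the identity in the universal case: the $a_{r,s}$ are independent indeterminates and we work over $K=\mathbb Q(a_{r,s})$. The identity then specializes to any commutative algebra $\mathcal A$.

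In the universal case the columns of $A$ span an $m$-dimensional subspace $W\subseteq K^{n+m}$. Choose an isomorphism $K^{n+m}/W\cong K^n$. Then there is a nonzero constant $c\in K$ with $\pi(v_1,\ldots,v_n)=c\,\det(\bar v_1,\ldots,\bar v_n)$, because the left-hand side is an alternating $n$-form vanishing whenever some $v_p\in W$, hence it factors through $\Lambda^n(K^{n+m}/W)$, which is one-dimensional. Dividing by $c^2$, the claim becomes the following statement for vectors $\bar u,\bar w_1,\ldots,\bar w_n,\bar y_2,\ldots,\bar y_n\in K^n$:
$$\sum_k \det(\bar w_1,\ldots,\underset{k}{\bar u},\ldots,\bar w_n)\det(\bar w_k,\bar y_2,\ldots,\bar y_n)=\det(\bar u,\bar y_2,\ldots,\bar y_n)\det(\bar w_1,\ldots,\bar w_n).$$
This is a form of the Grassmann--Pl\"ucker relation \eqref{eq2.3}.

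The key step is to verify this identity directly. If $\det(\bar w)\neq 0$, Cramer's rule gives
$$\bar u=\sum_k \frac{\det(\bar w_1,\ldots,\bar u,\ldots,\bar w_n)}{\det(\bar w)}\,\bar w_k,$$
with $\bar u$ in slot $k$ of the numerator. Substituting this into the first slot of $\det(\,\cdot\,,\bar y_2,\ldots,\bar y_n)$ and using linearity gives the identity at once. If $\det(\bar w)=0$, both sides are polynomial in the coordinates of the $\bar w$'s, and the identity already holds on the Zariski-dense set $\det(\bar w)\neq0$. Hence it holds everywhere, by the standard density argument for polynomial identities over an infinite field.

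The main obstacle is making the reduction in the second paragraph rigorous. One must check that $\pi$ really factors through $K^{n+m}/W$ with a nonzero scalar, and one must keep track of the sign convention $(-1)^{i_1+\cdots+i_n+n(n+1)/2}$ so that the $\pi^{\mathbf i}$ coincide exactly with $\pi(e_{i_1},\ldots,e_{i_n})$ rather than differing by a tuple-dependent sign. I would settle the sign by comparing with the Laplace expansion written before \eqref{expand}.

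As a sanity check independent of the generic argument, handle the case of a repeated index $j_a=j_b$ directly. On the left, only the terms $k=a$ and $k=b$ survive, and these cancel by antisymmetry, matching $\pi^{\mathbf j}=0$ on the right.
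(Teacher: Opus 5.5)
Your argument is correct. After the same reduction to the generic matrix, it takes a different route from the paper.

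The paper proceeds through matroid-theoretic machinery:
it regards the $m\times m$ minors of the rows of $X$ as a rank-$m$ Grassmann--Pl\"ucker function $\varphi$;
it identifies $\pi^I$ with the dual function $\varphi^*$ by counting inversions of $(i_1,\ldots,i_n,r_1,\ldots,r_m)$;
it imports from Baker--Bowler the fact that $\varphi^*$ satisfies the Grassmann--Pl\"ucker relations;
it then specializes that relation, which takes some sign manipulation.

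You instead realize $\pi$ as the alternating form $\det(\,\cdot\,\mid A)$ and factor it through $K^{n+m}/W$. This quotient is exactly the linear-algebraic content of that duality, so Cramer's rule replaces the citation and no separate sign count is needed. Your approach is self-contained and elementary. The paper's places the identity inside the general theory of Grassmann--Pl\"ucker functions.

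The sign issue you list as the main obstacle is already settled by the Laplace expansion written before \eqref{expand}. That expansion holds for an arbitrary $(n+m)\times n$ block $D$. For $D=(e_{i_{\sigma(1)}},\ldots,e_{i_{\sigma(n)}})$ with $i_1<\cdots<i_n$, every $D_J$ has a zero row except $D_I$, which is the permutation matrix of $\sigma$. This gives exactly $\operatorname{sgn}(\sigma)\pi^I$, matching the paper's alternation convention.

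Two smaller simplifications are available. First, you do not need $c\neq0$: the identity is homogeneous of degree two in $\pi$, so you only multiply the $n\times n$ identity by $c^2$. Second, the density step can be dropped, because $\sum_k\det(\bar w_1,\ldots,\bar u,\ldots,\bar w_n)\,\bar w_k=\det(\bar w)\,\bar u$ is just $W\operatorname{adj}(W)=\det(W)I$, which holds over any commutative ring.

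Pushing this one step further removes the quotient as well. Apply the same adjugate identity to the full matrix $(e_{j_1},\ldots,e_{j_n}\mid A)$ with $u=e_{i_1}$. Then apply the functional $\det(\,\cdot\,,e_{i_2},\ldots,e_{i_n}\mid A)$, which kills every column of $A$. This yields \eqref{eq:plucker-exchange} directly over $\mathbb Z[a_{r,s}]$, with no field, quotient, or density argument.
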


\begin{proof}
Both sides of \eqref{eq:plucker-exchange} are polynomial expressions with integer coefficients in the entries of $A$. It is therefore enough to prove the identity for the generic $(n+m)\times m$ matrix $X=(x_{rs})$ over the field
\[K=\mathbb{Q}\bigl(x_{rs}\mid 1\leq r\leq n+m,\ 1\leq s\leq m\bigr),\]
where the elements \(x_{rs}\) are algebraically independent over
\(\mathbb{Q}\).

The submatrix of $X$ formed by the first \(m\) rows is a generic
\(m\times m\) matrix. Its determinant is a nonzero polynomial in the
variables \(x_{rs}\), and hence it is a nonzero, therefore invertible,
element of \(K\). Thus \(X\) has an invertible \(m\times m\) minor.
Since \(X\) has exactly \(m\) columns, it follows that
\[\operatorname{rank}_{K}X=\operatorname{rank}_{K}X^{\mathsf{T}}=m.\]
Write the columns of $X^{\mathsf T}$ as $v_1,\ldots,v_{n+m}\in K^m$ and define 
$$\varphi(q_1,\ldots,q_m):=\det(v_{q_1},\ldots,v_{q_m}),\qquad q_1,\ldots,q_m\in N_{n+m}.$$
Since $\operatorname{rank}X^{\mathsf T}=m$, at least one $m\times m$ minor of $X^{\mathsf T}$ is nonzero, so $\varphi$ is not identically zero. Moreover, the determinant is alternating, and hence
$$\varphi(\ldots,q_i,\ldots,q_j,\ldots)=-\varphi(\ldots,q_j,\ldots,q_i,\ldots),$$
and $\varphi(q_1,\ldots,q_m)=0$ whenever two indices coincide and the maximal minors of a full-rank matrix satisfy \eqref{eq2.3}. Thus, $\varphi$ is a rank-$m$ Grassmann--Pl\"ucker function in the sense of \cite[Definition~3.15]{BakerBowler2019}.

Let
$$I=\{i_1<\cdots<i_n\},\qquad I^c=\{r_1<\cdots<r_m\}.$$
The columns $r_1,\ldots,r_m$ of $X^{\mathsf T}$ form the matrix $(X_{I^c})^{\mathsf T}$, and therefore
$$\varphi(r_1,\ldots,r_m)=\det\bigl((X_{I^c})^{\mathsf T}\bigr)=\det(X_{I^c}).$$

Consider the dual weak Grassmann--Plücker function on \(I\), defined by
\[\varphi^{*}(i_{1},\ldots,i_{n})=\operatorname{sgn}(i_{1},\ldots,i_{n},r_{1},\ldots,r_{m})
\varphi(r_{1},\ldots,r_{m}),\]
where \(r_{1},\ldots,r_{m}\) are the elements of
\(I\setminus\{i_{1},\ldots,i_{n}\}\) (see for the details in \cite[Definition~4.1]{BakerBowler2019}).

Since both $i_1<\cdots<i_n$ and $r_1<\cdots<r_m$ are increasing, the inversions in the permutation
$$(i_1,\ldots,i_n,r_1,\ldots,r_m)$$
arise precisely from pairs $(i_q,r_p)$ with $r_p<i_q$. For each $i_q$, there are exactly $i_q-q$ such elements of $I^c$. Hence
\begin{align*}
\operatorname{sgn}(i_1,\ldots,i_n,r_1,\ldots,r_m)
&=(-1)^{\sum_{q=1}^{n}(i_q-q)}\\
&=(-1)^{i_1+\cdots+i_n-\frac{n(n+1)}2}.
\end{align*}
Consequently, we get 
\begin{align*}
\varphi^{*}(i_{1},\ldots,i_{n})&=(-1)^{i_1+\cdots+i_n+\frac{n(n+1)}2}\det(X_{I^c})\\
&=\pi^I.
\end{align*}
By \cite[Lemma~4.2]{BakerBowler2019} $\varphi^{*}$ satisfies 
the Grassmann--Pl\"ucker relation given in \cite[Definition~3.15]{BakerBowler2019}. Therefore, for $\pi$ the relation has the following form 
\begin{equation}\label{eq3.4.1}
\sum_{k=1}^{n+1}(-1)^k\pi^{(c_1,\ldots,\widehat{c_k},\ldots,c_{n+1})}\pi^{(c_k,d_1,\ldots,d_{n-1})}=0.
\end{equation}

Assuming 
$$j_p=c_p, \quad 1\leq p \leq n, \quad i_1=c_{n+1}, \quad 
i_{q+1}=d_q, \quad 1\leq q \leq n-1,$$
then \eqref{eq3.4.1} takes the form
$$0=\sum_{k=1}^{n}(-1)^k\pi^{(j_1,\ldots,\widehat{j_k},\ldots,j_n,i_1)}\pi^{(j_k,i_2,\ldots,i_n)}+(-1)^{n+1}\pi^{\mathbf j}\pi^{\mathbf i}.$$
For each $k\in\{1,\ldots,n\}$, moving $i_1$ from the last position to the $k$-th position requires $n-k$ transpositions. Therefore, by alternating property, we get 
$$\pi^{(j_1,\ldots,\widehat{j_k},\ldots,j_n,i_1)}=(-1)^{n-k}\pi^{(j_1,\ldots,j_{k-1},i_1,j_{k+1},\ldots,j_n)}.$$
Substituting this equality gives
\begin{align*}
0=&\sum_{k=1}^{n}(-1)^k(-1)^{n-k}\pi^{(j_1,\ldots,j_{k-1},i_1,j_{k+1},\ldots,j_n)}\pi^{(j_k,i_2,\ldots,i_n)}+(-1)^{n+1}\pi^{\mathbf j}\pi^{\mathbf i}\\
=&(-1)^n\Bigg(\sum_{k=1}^{n}\pi^{(j_1,\ldots,j_{k-1},i_1,j_{k+1},\ldots,j_n)}\pi^{(j_k,i_2,\ldots,i_n)}-\pi^{\mathbf i}\pi^{\mathbf j}\Bigg).
\end{align*}
Hence
$$\sum_{k=1}^{n}\pi^{(j_1,\ldots,j_{k-1},i_1,j_{k+1},\ldots,j_n)}\pi^{(j_k,i_2,\ldots,i_n)}=\pi^{\mathbf i}\pi^{\mathbf j}.$$
The same equality holds for arbitrary ordered tuples $\mathbf i,\mathbf j\in N_{n+m}^n$.

Thus \eqref{eq:plucker-exchange} holds for the generic matrix $X$. Let the difference between the two sides of \eqref{eq:plucker-exchange} be $F(x_{rs})\in\mathbb Z[x_{rs}]$. The preceding argument gives
$$F(x_{rs})=0\quad\text{in }K.$$
Since the natural inclusion $\mathbb Z[x_{rs}]\hookrightarrow K$ is injective, $F$ is the zero polynomial. Substituting $x_{rs}=a_{rs}\in\mathcal A$ therefore proves \eqref{eq:plucker-exchange} for the original matrix $A$.
\end{proof}

\begin{cor}\label{s_2(y_t)}
For all $\mathbf i,\mathbf j\in N_{n+m}^n$ and every $t\in\{2,\ldots,n\}$, one has
\begin{equation}\label{eq3.4}
\begin{aligned}
\sum_{k=1}^{n}\Bigg(&\pi^{(j_1,\ldots,j_{k-1},i_1,j_{k+1},\ldots,j_n)}\pi^{(j_k,i_2,\ldots,i_n)}\\
+&\pi^{(j_1,\ldots,j_{k-1},i_t,j_{k+1},\ldots,j_n)}\pi^{(j_k,i_2,\ldots,i_{t-1},i_1,i_{t+1},\ldots,i_n)}\Bigg) =0
\end{aligned}
\end{equation}
\end{cor}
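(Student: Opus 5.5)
Apply the preceding Proposition twice, once to $\mathbf i$ and once to the tuple $\mathbf i'$ obtained from $\mathbf i$ by swapping its first and $t$-th entries, and then use that $\pi$ is alternating. Concretely, for $t\in\{2,\ldots,n\}$ set
\[
\mathbf i'=(i_t,i_2,\ldots,i_{t-1},i_1,i_{t+1},\ldots,i_n)\in N_{n+m}^n .
\]
Its first entry is $i_t$, and its entries in positions $2,\ldots,n$ are $(i_2,\ldots,i_{t-1},i_1,i_{t+1},\ldots,i_n)$.

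\textbf{Step 1.} Apply \eqref{eq:plucker-exchange} to the pair $(\mathbf i',\mathbf j)$. This gives
\[
\sum_{k=1}^{n}\pi^{(j_1,\ldots,j_{k-1},i_t,j_{k+1},\ldots,j_n)}\pi^{(j_k,i_2,\ldots,i_{t-1},i_1,i_{t+1},\ldots,i_n)}=\pi^{\mathbf i'}\pi^{\mathbf j}.
\]
The left-hand side is exactly the second group of summands in \eqref{eq3.4}.

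\textbf{Step 2.} Apply \eqref{eq:plucker-exchange} to the pair $(\mathbf i,\mathbf j)$. The sum of the first group of summands in \eqref{eq3.4} equals $\pi^{\mathbf i}\pi^{\mathbf j}$.

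\textbf{Step 3.} Add the two identities. The left-hand side of \eqref{eq3.4} becomes $(\pi^{\mathbf i}+\pi^{\mathbf i'})\pi^{\mathbf j}$. Since $\mathbf i'$ differs from $\mathbf i$ by the transposition of positions $1$ and $t$, the alternation convention for $\pi$ gives $\pi^{\mathbf i'}=-\pi^{\mathbf i}$. This also covers the degenerate case of repeated indices, where both sides are $0$. Hence the sum vanishes.

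The only point needing care is the bookkeeping in Step 1: one must check that the tuple appearing in the second factor of the Proposition, namely $(j_k,i'_2,\ldots,i'_n)$, really is $(j_k,i_2,\ldots,i_{t-1},i_1,i_{t+1},\ldots,i_n)$. When $t=2$ this reads $(j_k,i_1,i_3,\ldots,i_n)$. One must also confirm that the Proposition holds for arbitrary ordered tuples, possibly with repeats, which was noted at the end of its proof. No further obstacle is expected.
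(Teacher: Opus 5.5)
Your proposal is correct and is the paper's own argument: apply \eqref{eq:plucker-exchange} to $(\mathbf i,\mathbf j)$ and to $(\tau_t(\mathbf i),\mathbf j)$ with $\tau_t(\mathbf i)=(i_t,i_2,\ldots,i_{t-1},i_1,i_{t+1},\ldots,i_n)$, add the two identities, and use $\pi^{\tau_t(\mathbf i)}=-\pi^{\mathbf i}$ to get cancellation. Your bookkeeping of the second factor $(j_k,i'_2,\ldots,i'_n)$ and your treatment of repeated indices are both accurate.
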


\begin{proof}
Fix $t\in\{2,\ldots,n\}$ and set $\tau_t(\mathbf i)=(i_t,i_2,\ldots,i_{t-1},i_1,i_{t+1},\ldots,i_n).$ By \eqref{eq:plucker-exchange},
$$\sum_{k=1}^{n}\pi^{(j_1,\ldots,j_{k-1},i_1,j_{k+1},\ldots,j_n)}\pi^{(j_k,i_2,\ldots,i_n)}=\pi^{\mathbf i}\pi^{\mathbf j}.$$
Applying the same identity to $\tau_t(\mathbf i)$ gives
$$\sum_{k=1}^{n}\pi^{(j_1,\ldots,j_{k-1},i_t,j_{k+1},\ldots,j_n)}\pi^{(j_k,i_2,\ldots,i_{t-1},i_1,i_{t+1},\ldots,i_n)}=\pi^{\tau_t(\mathbf i)}\pi^{\mathbf j}.$$
Since $\tau_t$ interchanges the first and the $t$-th entries of $\mathbf i$ and $\pi$ is alternating, $\pi^{\tau_t(\mathbf i)}=-\pi^{\mathbf i}.$
Hence the two sums cancel, and \eqref{eq3.4} follows.
\end{proof}

The following results establish necessary and sufficient polynomial conditions for \((\mathcal A,\cdot,[-,\ldots,-]_A)\) to be a Poisson \(n\)-Lie algebra.

\begin{thm}[Determinantal criterion]\label{thm3.2}
The bracket in \eqref{eq:determinant-bracket} defines a Poisson $n$-Lie algebra structure on $\mathcal A$ if and only if
\begin{equation*}
\sum_{\mathbf i,\mathbf j\in N_{n+m}^n}\Bigg(\pi^{\mathbf i}d_{i_1}\bigl(\pi^{\mathbf j}\bigr)-\sum_{k=1}^{n}\pi^{(j_1,\ldots,j_{k-1},i_1,j_{k+1},\ldots,j_n)}d_{i_1}\bigl(\pi^{(j_k,i_2,\ldots,i_n)}\bigr)\Bigg)\prod_{q=2}^{n}d_{i_q}(y_q)\prod_{p=1}^{n}d_{j_p}(x_p)=0
\end{equation*}
for all $x_1,\ldots,x_n,y_2,\ldots,y_n\in\mathcal A$.
\end{thm}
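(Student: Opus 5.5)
Skew-symmetry of the bracket \eqref{eq:determinant-bracket} and the Leibniz rule \eqref{eq2.2} hold automatically, since the bracket is multilinear alternating in the first $n$ columns and each $d_i$ is a derivation, so $[x_1,\ldots,x_n]$ is a derivation in each argument. Hence the only thing to check is the fundamental identity \eqref{eq2.1}, and the plan is to compute both sides with the expansion \eqref{expand}. I show that, after the cancellations coming from the derivation property and from Corollary~\ref{s_2(y_t)}, the difference of the two sides is exactly the displayed expression (up to renaming the variables: the $x$'s of the statement play the role of $y_1,\ldots,y_n$ and $y_2,\ldots,y_n$ play the role of $x_1,\ldots,x_{n-1}$).

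For the left-hand side I write $[z_2,\ldots,z_n,[x_1,\ldots,x_n]]$. By skew-symmetry this equals $(-1)^{n-1}[[x_1,\ldots,x_n],z_2,\ldots,z_n]$, so it is convenient to place the inner bracket in the first slot throughout; I will then simply prove $[[x_1,\ldots,x_n],y_2,\ldots,y_n]=\sum_k[x_1,\ldots,[x_k,y_2,\ldots,y_n],\ldots,x_n]$. Expanding with \eqref{expand}, the left side is $\sum_{\mathbf i}\pi^{\mathbf i} d_{i_1}\bigl(\sum_{\mathbf j}\pi^{\mathbf j}\prod_p d_{j_p}(x_p)\bigr)\prod_{q\ge2}d_{i_q}(y_q)$. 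Applying the Leibniz rule for $d_{i_1}$ splits this into a term in which $d_{i_1}$ hits $\pi^{\mathbf j}$, which gives the first term of the criterion, and terms in which $d_{i_1}$ hits a single factor $d_{j_k}(x_k)$. In the latter, commutativity of the derivations lets me write $d_{i_1}d_{j_k}(x_k)=d_{j_k}d_{i_1}(x_k)$.

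On the right, the $k$-th summand is $\sum_{\mathbf j}\pi^{\mathbf j}\prod_{p\ne k}d_{j_p}(x_p)\,d_{j_k}\bigl(\sum_{\mathbf i'}\pi^{\mathbf i'}d_{i'_1}(x_k)\prod_{q\ge2}d_{i'_q}(y_q)\bigr)$. Again $d_{j_k}$ distributes over three kinds of factor. When it hits $\pi^{\mathbf i'}$, I relabel indices, swapping the roles of $j_k$ and $i_1$, to obtain $\sum\pi^{(j_1,\ldots,i_1,\ldots,j_n)}d_{i_1}(\pi^{(j_k,i_2,\ldots,i_n)})\cdots$; this produces the second term of the criterion. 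When it hits $d_{i'_1}(x_k)$, the resulting second-order terms should match those from the left side through the exchange identity \eqref{eq:plucker-exchange}, which says $\sum_k\pi^{(\ldots i_1\ldots)}\pi^{(j_k,i_2,\ldots)}=\pi^{\mathbf i}\pi^{\mathbf j}$. When it hits some $d_{i'_t}(y_t)$, the second-order terms in $y_t$ have no counterpart on the left, and they must cancel among themselves. Here I use the symmetry $d_{j_k}d_{i_t}=d_{i_t}d_{j_k}$ to pair the term for $(i_t,j_k)$ with the term where the roles are interchanged, and the cancellation is precisely Corollary~\ref{s_2(y_t)}.

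Once all second-derivative terms are cancelled, the difference of the two sides is the displayed sum, which proves sufficiency. For necessity, the identity must hold for all arguments, so the fundamental identity is equivalent to the vanishing of that expression; no independence argument is needed because the difference is literally the expression. The main obstacle is the bookkeeping of the index relabelings, so that the second-order terms are in exactly the form to which \eqref{eq:plucker-exchange} and \eqref{eq3.4} apply. To handle it I would first rename summation indices so that the derivation landing on $x_k$ or $y_t$ is always written as $d_{i_1}$ composed with another, and only then invoke the identities.
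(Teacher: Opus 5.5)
Your plan is the paper's proof. You reduce \eqref{eq2.1} to \eqref{eq3.6} by skew-symmetry and expand both sides with \eqref{expand}. You then sort the terms into derivatives of the coefficients, second derivatives of the $x_k$, and second derivatives of the $y_t$, and kill the $y_t$-terms with Corollary~\ref{s_2(y_t)}. Finally, you observe that the remaining defect is literally the displayed expression, so necessity needs no independence argument. For the $y_t$-terms, your pairing of $j_k$ with $i_t$ is the paper's interchange $\tau_t\colon i_1\leftrightarrow i_t$ written in the original indices. It produces \eqref{eq3.4} only after the renaming $i_1\leftrightarrow j_k$, which makes the $x$-monomial equal to $\prod_p d_{j_p}(x_p)$ for every $k$, so the sum over $k$ in \eqref{eq3.4} can be formed. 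Your last paragraph supplies that renaming, and the final averaging uses $\operatorname{char}\mathbb F\neq 2$.

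One step is misdescribed, however. The second-order terms in the $x_k$ are not matched through \eqref{eq:plucker-exchange}, and that identity cannot do it. It relates a sum over $k$ taken against a single common differential monomial, whereas here the second derivative falls on a different variable $x_k$ for each $k$, so the families for different $k$ must be matched separately. In fact no relation among the $\pi$'s is needed. For each fixed $t$, the left-hand side contributes
\[
\sum_{\mathbf i,\mathbf j\in N_{n+m}^n}\pi^{\mathbf i}\pi^{\mathbf j}\,d_{i_1}\bigl(d_{j_t}(x_t)\bigr)\prod_{p\neq t}d_{j_p}(x_p)\prod_{q=2}^{n}d_{i_q}(y_q),
\]
and the $k=t$ summand on the right contributes exactly the same sum with $d_{j_t}\bigl(d_{i_1}(x_t)\bigr)$ in its place, without any relabelling. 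The two cancel term by term by the commutativity $d_{i_1}d_{j_t}=d_{j_t}d_{i_1}$ that you already recorded. If you first apply the swap $i_1\leftrightarrow j_k$ to these terms, as your last paragraph suggests, you must undo it to see this. The Pl\"ucker input enters the proof only through \eqref{eq3.4}, for the $y_t$-terms.
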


\begin{proof}
The determinant in \eqref{eq:determinant-bracket} is alternating in its first $n$ columns, and the derivation property of the $d_r$ implies the Leibniz rule in each argument. It remains to determine exactly when the fundamental identity holds. By skew-symmetry, \eqref{eq2.1} is equivalent to
\begin{equation}\label{eq3.6}
[[x_1,\ldots,x_n],y_2,\ldots,y_n]=\sum_{k=1}^{n}[x_1,\ldots,x_{k-1},[x_k,y_2,\ldots,y_n],x_{k+1},\ldots,x_n].
\end{equation}
Denote the two sides of \eqref{eq3.6} by $\mathrm{LHS}$ and $\mathrm{RHS}$, respectively.

Applying \eqref{expand} first to the outer bracket and then to the inner bracket gives
$$\mathrm{LHS}=\sum_{\mathbf i,\mathbf j\in N_{n+m}^n}\pi^{\mathbf i}d_{i_1}\Bigg(\pi^{\mathbf j}\prod_{p=1}^{n}d_{j_p}(x_p)\Bigg)\prod_{q=2}^{n}d_{i_q}(y_q).$$
The Leibniz rule yields
$$d_{i_1}\Bigg(\pi^{\mathbf j}\prod_{p=1}^{n}d_{j_p}(x_p)\Bigg)=d_{i_1}\bigl(\pi^{\mathbf j}\bigr)\prod_{p=1}^{n}d_{j_p}(x_p)+\pi^{\mathbf j}\sum_{t=1}^{n}d_{i_1}\bigl(d_{j_t}(x_t)\bigr)\prod_{\substack{1\leq p\leq n\\p\ne t}}d_{j_p}(x_p).$$
Consequently,
\begin{equation}\label{eq3.7}
\begin{aligned}
\mathrm{LHS}={}&\sum_{\mathbf i,\mathbf j\in N_{n+m}^n}\pi^{\mathbf i}d_{i_1}\bigl(\pi^{\mathbf j}\bigr)\prod_{p=1}^{n}d_{j_p}(x_p)\prod_{q=2}^{n}d_{i_q}(y_q)\\
&+\sum_{t=1}^{n}\sum_{\mathbf i,\mathbf j\in N_{n+m}^n}\pi^{\mathbf i}\pi^{\mathbf j}d_{i_1}\bigl(d_{j_t}(x_t)\bigr)\prod_{\substack{1\leq p\leq n\\p\ne t}}d_{j_p}(x_p)\prod_{q=2}^{n}d_{i_q}(y_q).
\end{aligned}
\end{equation}
In \eqref{eq3.7}, the first sum contains the derivatives of the coefficients $\pi^{\mathbf j}$, while the second contains all second derivatives of the $x_t$. No second derivatives of the $y_q$ occur on the left-hand side.

We now expand the right-hand side. For a fixed $k\in\{1,\ldots,n\}$, formula \eqref{expand} gives
$$
\begin{aligned}
&[x_1,\ldots,x_{k-1},[x_k,y_2,\ldots,y_n],
x_{k+1},\ldots,x_n]=
\sum_{\mathbf j\in N_{n+m}^n}
\pi^{\mathbf j}
\prod_{\substack{1\leq p\leq n\\p\ne k}}d_{j_p}(x_p)
d_{j_k}\bigl([x_k,y_2,\ldots,y_n]\bigr),
\end{aligned}
$$
where
$$[x_k,y_2,\ldots,y_n]=\sum_{\mathbf i\in N_{n+m}^n}\pi^{\mathbf i}d_{i_1}(x_k)\prod_{q=2}^{n}d_{i_q}(y_q).$$
When $d_{j_k}$ is applied to this last expression, it acts on the coefficient $\pi^{\mathbf i}$, on $d_{i_1}(x_k)$, or on one of the factors $d_{i_t}(y_t)$. Hence
\begin{align*}
d_{j_k}\bigl([x_k,y_2,\ldots,y_n]\bigr)=& \quad \sum_{\mathbf i\in N_{n+m}^n}d_{j_k}\bigl(\pi^{\mathbf i}\bigr)d_{i_1}(x_k)\prod_{q=2}^{n}d_{i_q}(y_q)\\
&+\sum_{\mathbf i\in N_{n+m}^n}\pi^{\mathbf i}d_{j_k}\bigl(d_{i_1}(x_k)\bigr)\prod_{q=2}^{n}d_{i_q}(y_q)\\
&+\sum_{t=2}^{n}\sum_{\mathbf i\in N_{n+m}^n}\pi^{\mathbf i}d_{i_1}(x_k)d_{j_k}\bigl(d_{i_t}(y_t)\bigr)\prod_{\substack{2\leq q\leq n\\q\ne t}}d_{i_q}(y_q).
\end{align*}
Substituting this formula and summing over $k$ gives the complete
expansion
\begin{equation}\label{eq:FI-right-full-expansion}
\begin{aligned}
\mathrm{RHS}=
&\quad\sum_{k=1}^{n}\sum_{\mathbf i,\mathbf j\in N_{n+m}^n}\pi^{\mathbf j}d_{j_k}\bigl(\pi^{\mathbf i}\bigr)d_{i_1}(x_k)\prod_{\substack{1\leq p\leq n\\p\ne k}}d_{j_p}(x_p)\prod_{q=2}^{n}d_{i_q}(y_q)\\
&+\sum_{k=1}^{n}\sum_{\mathbf i,\mathbf j\in N_{n+m}^n}\pi^{\mathbf j}\pi^{\mathbf i}d_{j_k}\bigl(d_{i_1}(x_k)\bigr)\prod_{\substack{1\leq p\leq n\\p\ne k}}d_{j_p}(x_p)\prod_{q=2}^{n}d_{i_q}(y_q)\\
&+\sum_{k=1}^{n}\sum_{t=2}^{n}\sum_{\mathbf i,\mathbf j\in N_{n+m}^n}\pi^{\mathbf j}\pi^{\mathbf i}d_{i_1}(x_k)\prod_{\substack{1\leq p\leq n\\p\ne k}}d_{j_p}(x_p)d_{j_k}\bigl(d_{i_t}(y_t)\bigr)\prod_{\substack{2\leq q\leq n\\q\ne t}}d_{i_q}(y_q).
\end{aligned}
\end{equation}
The three sums correspond, respectively, to derivatives of the coefficients, second derivatives of the $x_k$, and second derivatives of the $y_t$.

Thus, we obtain $\mathrm{LHS}-\mathrm{RHS}=S_1+S_2$, where $S_1$ collects the terms involving derivatives of the coefficients $\pi$ and $S_2$ collects terms involving second derivatives of $x_k$ and $y_k$.

Fix $t\in\{1,\ldots,n\}$. In \eqref{eq3.7}, the contribution containing a second derivative of $x_t$ is
$$
\begin{aligned}
&\sum_{\mathbf i,\mathbf j\in N_{n+m}^n}
\pi^{\mathbf i}\pi^{\mathbf j}
\prod_{q=2}^{n}d_{i_q}(y_q)
\prod_{\substack{1\leq p\leq n\\p\ne t}}d_{j_p}(x_p)
d_{i_1}\bigl(d_{j_t}(x_t)\bigr).
\end{aligned}
$$
On the right-hand side, a second derivative of $x_t$ can occur only
in the second sum of \eqref{eq:FI-right-full-expansion} with $k=t$. Therefore
\[
\begin{aligned}
S_2(x_t)
={}&
\sum_{\mathbf i,\mathbf j\in N_{n+m}^n}
\pi^{\mathbf i}\pi^{\mathbf j}
\prod_{q=2}^{n}d_{i_q}(y_q)
\prod_{\substack{1\leq p\leq n\\p\ne t}}d_{j_p}(x_p)
\Bigl(
d_{i_1}d_{j_t}-d_{j_t}d_{i_1}
\Bigr)(x_t).
\end{aligned}
\]
Since the derivations commute pairwise, the operator in parentheses is zero. It follows that
\[
S_2(x_t)=0
\qquad\text{for every }t\in\{1,\ldots,n\}.
\]

Now fix $t\in\{2,\ldots,n\}$. The terms containing a second derivative of $y_t$ occur only in the third sum of \eqref{eq:FI-right-full-expansion}. Hence
$$S_2(y_t)=-\sum_{k=1}^{n}\sum_{\mathbf i,\mathbf j\in N_{n+m}^n}\pi^{\mathbf j}\pi^{\mathbf i}d_{i_1}(x_k)\prod_{\substack{1\leq p\leq n\\p\ne k}}d_{j_p}(x_p)\prod_{\substack{2\leq q\leq n\\q\ne t}}d_{i_q}(y_q)d_{j_k}\bigl(d_{i_t}(y_t)\bigr).$$

For each fixed $k$, reindex the sum by interchanging $i_1$ and $j_k$, namely,
$$(i_1,i_2,\ldots,i_n;\,j_1,\ldots,j_k,\ldots,j_n)\longmapsto(j_k,i_2,\ldots,i_n;\,j_1,\ldots,i_1,\ldots,j_n).$$
This is a bijection of $N_{n+m}^n\times N_{n+m}^n$. After reindexing and renaming the indices, we obtain
$$S_2(y_t)=-\sum_{k=1}^{n}\sum_{\mathbf i,\mathbf j\in N_{n+m}^n}\pi^{(j_1,\ldots,j_{k-1},i_1,j_{k+1},\ldots,j_n)}\pi^{(j_k,i_2,\ldots,i_n)} 
\prod_{p=1}^{n}d_{j_p}(x_p)
\prod_{\substack{2\leq q\leq n\\q\ne t}}d_{i_q}(y_q)
d_{i_1}\bigl(d_{i_t}(y_t)\bigr).
$$ 

Let us reindex the sum by $\mathbf i\mapsto\tau_t(\mathbf i)=(i_t,i_2,\ldots,i_{t-1},i_1,i_{t+1},\ldots,i_n).$
Since $\tau_t$ is a bijection and the derivations commute, we also have
\[
\begin{aligned}
S_2(y_t)=-\sum_{k=1}^{n}\sum_{\mathbf i,\mathbf j\in N_{n+m}^n}&\pi^{(j_1,\ldots,j_{k-1},i_t,j_{k+1},\ldots,j_n)}\pi^{(j_k,i_2,\ldots,i_{t-1},i_1,i_{t+1},\ldots,i_n)}\\
&\times\prod_{p=1}^{n}d_{j_p}(x_p)\prod_{\substack{2\leq q\leq n\\q\ne t}}d_{i_q}(y_q)d_{i_1}\bigl(d_{i_t}(y_t)\bigr).
\end{aligned}
\]
Averaging this expression with the preceding one gives
$$\begin{aligned}
S_2(y_t)={}&-\frac{1}{2}\sum_{\mathbf i,\mathbf j\in N_{n+m}^n}\Bigg[\sum_{k=1}^{n}\pi^{(j_1,\ldots,j_{k-1},i_1,j_{k+1},\ldots,j_n)}\pi^{(j_k,i_2,\ldots,i_n)}\\
&\qquad\qquad+\sum_{k=1}^{n}\pi^{(j_1,\ldots,j_{k-1},i_t,j_{k+1},\ldots,j_n)}\pi^{(j_k,i_2,\ldots,i_{t-1},i_1,i_{t+1},\ldots,i_n)}\Bigg]\\
&\qquad\times\prod_{p=1}^{n}d_{j_p}(x_p)\prod_{\substack{2\leq q\leq n\\q\ne t}}d_{i_q}(y_q)d_{i_1}\bigl(d_{i_t}(y_t)\bigr).
\end{aligned}$$
By Corollary~\ref{s_2(y_t)}, the expression in brackets is zero. Hence $S_2(y_t)=0.$

Consequently, $$S_2=\sum_{t=1}^{n}S_2(x_t)+\sum_{t=2}^{n}S_2(y_t)
=0.$$

It remains to compute $S_1$. The terms arising from differentiating the coefficient $\pi^{\mathbf j}$ on the left-hand side are
\begin{equation}\label{eq:S1-left-coefficient}
\sum_{\mathbf i,\mathbf j\in N_{n+m}^n}
\pi^{\mathbf i}d_{i_1}\bigl(\pi^{\mathbf j}\bigr)
\prod_{q=2}^{n}d_{i_q}(y_q)
\prod_{p=1}^{n}d_{j_p}(x_p).
\end{equation}
For fixed $k$, the corresponding contribution from the right-hand
side, before it is subtracted, is
\begin{equation}\label{eq:S1-right-coefficient}
\begin{aligned}
\sum_{\mathbf i,\mathbf j\in N_{n+m}^n}
&\pi^{\mathbf j}d_{j_k}\bigl(\pi^{\mathbf i}\bigr)
d_{i_1}(x_k)
\prod_{\substack{1\leq p\leq n\\p\ne k}}d_{j_p}(x_p)
\prod_{q=2}^{n}d_{i_q}(y_q).
\end{aligned}
\end{equation}
To put \eqref{eq:S1-right-coefficient} in the same differential order as \eqref{eq:S1-left-coefficient}, interchange $i_1$ and $j_k$ as before, we obtain that \eqref{eq:S1-right-coefficient} becomes
 
\[
\begin{aligned}
\sum_{\mathbf i,\mathbf j\in N_{n+m}^n}
&\pi^{(j_1,\ldots,j_{k-1},i_1,j_{k+1},\ldots,j_n)}
d_{i_1}\bigl(\pi^{(j_k,i_2,\ldots,i_n)}\bigr)
\prod_{q=2}^{n}d_{i_q}(y_q)
\prod_{p=1}^{n}d_{j_p}(x_p).
\end{aligned}
\]
After summing over $k$ and remembering that all right-hand-side terms occur with a minus sign in $\mathrm{LHS}-\mathrm{RHS}$, we obtain
\[
\begin{aligned}
S_1=
\sum_{\mathbf i,\mathbf j\in N_{n+m}^n}
\Bigg(
\pi^{\mathbf i}d_{i_1}\bigl(\pi^{\mathbf j}\bigr)-
\sum_{k=1}^{n}
\pi^{(j_1,\ldots,j_{k-1},i_1,j_{k+1},\ldots,j_n)}
d_{i_1}\bigl(\pi^{(j_k,i_2,\ldots,i_n)}\bigr)
\Bigg)\prod_{q=2}^{n}d_{i_q}(y_q)
\prod_{p=1}^{n}d_{j_p}(x_p).
\end{aligned}
\]
This is exactly the left-hand side of Theorem~\ref{thm3.2}. Since $S_2=0$, the defect of the
fundamental identity is equal to this expression. It follows that
\eqref{eq3.6} holds for all
$x_1,\ldots,x_n,y_2,\ldots,y_n\in\mathcal A$ if and only if
equality in Theorem~\ref{thm3.2} holds for all such elements. The
alternation and Leibniz properties were already established, so this is equivalent that
$(\mathcal A,\cdot,[-,\ldots,-]_A)$ is a Poisson $n$-Lie algebra.
\end{proof}

The following additional assumption allows the differential products in Theorem~\ref{thm3.2} to be isolated, yielding a necessary and sufficient coefficient criterion.

\begin{equation*}
\sum_{\mathbf i,\mathbf j\in N_{n+m}^n}\Bigg(\pi^{\mathbf i}d_{i_1}\bigl(\pi^{\mathbf j}\bigr)-\sum_{k=1}^{n}\pi^{(j_1,\ldots,j_{k-1},i_1,j_{k+1},\ldots,j_n)}d_{i_1}\bigl(\pi^{(j_k,i_2,\ldots,i_n)}\bigr)\Bigg)\prod_{q=2}^{n}d_{i_q}(y_q)\prod_{p=1}^{n}d_{j_p}(x_p)=0
\end{equation*}

\textbf{Assumption.}
For every
$$(u_2,\ldots,u_n)\in N_{n+m}^{n-1},\qquad (v_1,\ldots,v_n)\in N_{n+m}^{n},$$
there exist $x_1,\ldots,x_n,y_2,\ldots,y_n\in\mathcal A$ such that
$$P_{u,v}:=\prod_{q=2}^{n}d_{u_q}(y_q)\prod_{p=1}^{n}d_{v_p}(x_p)$$
is a nonzero element of $\mathcal A$ satisfying 
$$zP_{u,v}=0\Longrightarrow z=0\qquad\text{for every }z\in\mathcal A,$$
while
$$\prod_{q=2}^{n}d_{u'_q}(y_q)\prod_{p=1}^{n}d_{v'_p}(x_p)=0$$
for every pair $(u'_2,\ldots,u'_n)\in N_{n+m}^{n-1}$ and $(v'_1,\ldots,v'_n)\in N_{n+m}^{n}$ distinct from the prescribed pair.

\begin{cor}[Coefficient criterion]
\label{crite}
Under the above assumption, the bracket in
\eqref{eq:determinant-bracket} defines a Poisson $n$-Lie algebra if
and only if
\begin{equation}\label{eq5.5.1}
\begin{aligned}\sum_{a=1}^{n+m}\Bigg(\pi^{(a,i_2,\ldots,i_n)}d_a\bigl(\pi^{(j_1,\ldots,j_n)}\bigr)-\sum_{k=1}^{n}\pi^{(j_1,\ldots,j_{k-1},a,j_{k+1},\ldots,j_n)}d_a\bigl(\pi^{(j_k,i_2,\ldots,i_n)}\bigr)\Bigg)=0
\end{aligned}
\end{equation}
for all $(i_2,\ldots,i_n)\in N_{n+m}^{n-1}$ and
$(j_1,\ldots,j_n)\in N_{n+m}^n$.
\end{cor}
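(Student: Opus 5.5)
Our starting point is Theorem~\ref{thm3.2}: the bracket defines a Poisson $n$-Lie algebra if and only if the polynomial expression there vanishes for all $x_1,\ldots,x_n,y_2,\ldots,y_n$. To get a coefficient criterion, we will regroup that expression according to the monomials $P_{u,v}$. Write the sum over $\mathbf i,\mathbf j$ as a sum over the ``position'' indices $(i_2,\ldots,i_n)$ and $\mathbf j$, which occur in the differential product, and over the free index $a=i_1$, which occurs only in the coefficient. The expression then becomes
\[
\sum_{(i_2,\ldots,i_n)\in N_{n+m}^{n-1}}\ \sum_{\mathbf j\in N_{n+m}^n} C_{(i_2,\ldots,i_n),\mathbf j}\prod_{q=2}^{n}d_{i_q}(y_q)\prod_{p=1}^{n}d_{j_p}(x_p),
\]
where $C_{(i_2,\ldots,i_n),\mathbf j}$ is exactly the left-hand side of \eqref{eq5.5.1}. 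This step is only a bookkeeping rearrangement of a finite sum, using that $i_1$ does not appear in the product.

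\emph{Sufficiency.} If every $C_{u,v}$ vanishes, then the regrouped expression is zero for all arguments. Theorem~\ref{thm3.2} then gives the Poisson $n$-Lie structure. The Assumption is not needed for this direction.

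\emph{Necessity.} Suppose the bracket is Poisson $n$-Lie. Fix $u=(i_2,\ldots,i_n)$ and $v=\mathbf j$, and use the Assumption to choose $x_1,\ldots,x_n,y_2,\ldots,y_n$ such that the product indexed by $(u,v)$ is $P_{u,v}$ and every other product indexed by $(u',v')$ vanishes. Substituting these elements, Theorem~\ref{thm3.2} reduces to $C_{u,v}P_{u,v}=0$. The cancellation property of $P_{u,v}$ then gives $C_{u,v}=0$. Since $u$ and $v$ are arbitrary, \eqref{eq5.5.1} holds for all index tuples.

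The main obstacle is the regrouping. Everything hinges on matching the Theorem's index structure with the summation over $a$ in \eqref{eq5.5.1}: renaming $i_1$ as $a$ must turn the coefficient term $\pi^{\mathbf i}d_{i_1}(\pi^{\mathbf j})$ into $\pi^{(a,i_2,\ldots,i_n)}d_a(\pi^{\mathbf j})$. It must likewise turn the $k$-th exchanged term into $\pi^{(j_1,\ldots,a,\ldots,j_n)}d_a(\pi^{(j_k,i_2,\ldots,i_n)})$. The point to check is that the tuples containing $i_1$ and the derivation $d_{i_1}$ contribute nothing to the monomial indices. Once that is verified, both directions follow directly.
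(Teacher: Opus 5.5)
Your proposal is correct and matches the paper's proof essentially step for step. Sufficiency comes from regrouping the sum in Theorem~\ref{thm3.2} by $(i_2,\ldots,i_n)$ and $\mathbf j$, with $i_1=a$ summed out; necessity comes from choosing elements via the Assumption to isolate $P_{u,v}$ and then cancelling it. The ``obstacle'' you flag is immediate, because the differential product in Theorem~\ref{thm3.2} involves only $i_2,\ldots,i_n$ and $\mathbf j$.
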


\begin{proof}
The sufficiency follows immediately from Theorem~\ref{thm3.2}, by grouping the terms according to $(i_2,\ldots,i_n)$ and $(j_1,\ldots,j_n)$.

Conversely, suppose that the bracket defines a Poisson $n$-Lie algebra. Fix $(u_2,\ldots,u_n)\in N_{n+m}^{n-1}$ and $(v_1,\ldots,v_n)\in N_{n+m}^n$, and choose $x_1,\ldots,x_n,y_2,\ldots,y_n$ as in the assumption. In the identity of Theorem~\ref{thm3.2}, the differential product vanishes unless
$$(i_2,\ldots,i_n)=(u_2,\ldots,u_n),\qquad (j_1,\ldots,j_n)=(v_1,\ldots,v_n).$$
Summing the remaining terms over $i_1=a$ therefore gives
$$\left(\sum_{a=1}^{n+m}\Bigg(\pi^{(a,u_2,\ldots,u_n)}d_a\bigl(\pi^{(v_1,\ldots,v_n)}\bigr)-\sum_{k=1}^{n}\pi^{(v_1,\ldots,v_{k-1},a,v_{k+1},\ldots,v_n)}d_a\bigl(\pi^{(v_k,u_2,\ldots,u_n)}\bigr)\Bigg)\right)P_{u,v}=0.$$
Since multiplication by $P_{u,v}$ is injective, the coefficient in parentheses is zero. As the prescribed pair was arbitrary, this proves \eqref{eq5.5.1}.
\end{proof}

Returning to the general setting of Theorem~\ref{thm3.2}, we obtain a sufficient condition by requiring each summand in its criterion to vanish separately.

\begin{cor}[Pointwise sufficient criterion]\label{cor:pointwise-sufficient}
The bracket in \eqref{eq:determinant-bracket} defines a Poisson $n$-Lie algebra whenever
\begin{equation}\label{eq:pointwise-differential}
\begin{aligned}
\pi^{\mathbf i}d_{i_1}\bigl(\pi^{\mathbf j}\bigr)=\sum_{k=1}^{n}\pi^{(j_1,\ldots,j_{k-1},i_1,j_{k+1},\ldots,j_n)}d_{i_1}\bigl(\pi^{(j_k,i_2,\ldots,i_n)}\bigr)
\end{aligned}
\end{equation}
for all $\mathbf i,\mathbf j\in N_{n+m}^n$.
\end{cor}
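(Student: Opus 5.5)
This follows directly from Theorem~\ref{thm3.2}. Since \eqref{eq:determinant-bracket} is alternating in its first $n$ columns and the $d_r$ are derivations, the bracket is skew-symmetric and satisfies the Leibniz rule. So it suffices to show that the expression in Theorem~\ref{thm3.2} vanishes for all $x_1,\ldots,x_n,y_2,\ldots,y_n\in\mathcal A$.

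I will write that expression as a finite sum over pairs $(\mathbf i,\mathbf j)\in N_{n+m}^n\times N_{n+m}^n$. Each summand is the coefficient
\[
C_{\mathbf i,\mathbf j}:=\pi^{\mathbf i}d_{i_1}\bigl(\pi^{\mathbf j}\bigr)-\sum_{k=1}^{n}\pi^{(j_1,\ldots,j_{k-1},i_1,j_{k+1},\ldots,j_n)}d_{i_1}\bigl(\pi^{(j_k,i_2,\ldots,i_n)}\bigr)
\]
multiplied by the differential product $\prod_{q=2}^{n}d_{i_q}(y_q)\prod_{p=1}^{n}d_{j_p}(x_p)$. Hypothesis \eqref{eq:pointwise-differential}, assumed for every pair $(\mathbf i,\mathbf j)$ including the full index $\mathbf i$ with its first entry $i_1$, says precisely that $C_{\mathbf i,\mathbf j}=0$ for each such pair. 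Hence every summand vanishes, and so does the whole sum, whatever the elements $x_p,y_q$ are.

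Theorem~\ref{thm3.2} then gives the fundamental identity \eqref{eq2.1}, and $(\mathcal A,\cdot,[-,\ldots,-])$ is a Poisson $n$-Lie algebra. The only thing to check is that the coefficient in \eqref{eq:pointwise-differential} matches the one in Theorem~\ref{thm3.2} term by term, with the same index placement; it does. Unlike Corollary~\ref{crite}, this direction needs no separation of differential products, so the Assumption is not used.
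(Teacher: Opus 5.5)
Your proposal is correct and follows the paper's own proof: the hypothesis says exactly that each coefficient $C_{\mathbf i,\mathbf j}$ in the criterion of Theorem~\ref{thm3.2} vanishes, so every summand is zero and the theorem gives the Poisson $n$-Lie structure. Your remark that, unlike Corollary~\ref{crite}, this direction does not need the Assumption is also correct.
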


\begin{proof}
For each fixed pair $\mathbf i,\mathbf j$, identity
\eqref{eq:pointwise-differential} implies that the corresponding expression in Theorem~\ref{thm3.2} vanishes. Hence every summand in the criterion vanishes separately.
\end{proof}

\begin{cor}\label{cor:constant-matrix}
Suppose that
$$d_r(a_{p,s})=0\qquad(1\leq r,p\leq n+m,\ 1\leq s\leq m).$$
Then \eqref{eq:determinant-bracket} defines a Poisson $n$-Lie bracket. In particular, this holds when $\mathcal A$ is unital and
$a_{p,s}\in\mathbb F1_{\mathcal A}$ for all $p,s$.
\end{cor}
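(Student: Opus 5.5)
The plan is to reduce Corollary~\ref{cor:constant-matrix} to Corollary~\ref{cor:pointwise-sufficient}, or equally to Theorem~\ref{thm3.2}. The key observation is that every coefficient $\pi^{\mathbf i}$ is annihilated by every derivation $d_r$.

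First, for an $n$-element subset $I\subseteq N_{n+m}$, the element $\pi^I$ is a sign times $\det(A_{I^c})$. By the Leibniz expansion of the determinant, this is an integer linear combination of products $a_{p_1,1}a_{p_2,2}\cdots a_{p_m,m}$ with entries taken from $A$. Since each $d_r$ is a derivation, the Leibniz rule gives
\[
d_r\bigl(a_{p_1,1}\cdots a_{p_m,m}\bigr)=\sum_{s=1}^{m}a_{p_1,1}\cdots d_r(a_{p_s,s})\cdots a_{p_m,m}.
\]
By hypothesis each summand contains a factor $d_r(a_{p_s,s})=0$, so this expression vanishes. By $\mathbb F$-linearity of $d_r$, we get $d_r(\pi^I)=0$. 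The extended symbols $\pi^{\mathbf i}$ for ordered tuples are either $\pm\pi^I$ or $0$, so $d_r(\pi^{\mathbf i})=0$ for all $r$ and all $\mathbf i\in N_{n+m}^n$.

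Consequently both sides of \eqref{eq:pointwise-differential} are zero for every pair $\mathbf i,\mathbf j$. Corollary~\ref{cor:pointwise-sufficient} then shows that \eqref{eq:determinant-bracket} is a Poisson $n$-Lie bracket. Equivalently, the whole expression in Theorem~\ref{thm3.2} vanishes term by term.

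For the final clause, note that when $\mathcal A$ is unital any derivation kills $1_{\mathcal A}$, since $d(1)=d(1\cdot1)=2d(1)$. By linearity $d_r$ then kills $\mathbb F1_{\mathcal A}$, so the hypothesis holds whenever all $a_{p,s}\in\mathbb F1_{\mathcal A}$.

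No step here is a real obstacle. The only point needing care is the observation that the coefficients $\pi^{\mathbf i}$ are polynomial in the entries of $A$, which is immediate from their definition as signed minors.
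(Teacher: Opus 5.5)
Your proof is correct and follows the paper's own argument. You show that every $\pi^{\mathbf i}$, being a signed sum of products of entries of $A$, is annihilated by each $d_r$, so both sides of \eqref{eq:pointwise-differential} vanish and Corollary~\ref{cor:pointwise-sufficient} applies; you then handle the unital clause through $d_r(1_{\mathcal A})=0$, exactly as the paper does.
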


\begin{proof}
Each complementary minor is a signed sum of products of entries of $A$. The Leibniz rule and the hypothesis therefore give
$$d_r\bigl(\pi^{\mathbf i}\bigr)=0\qquad(r\in N_{n+m},\ \mathbf i\in N_{n+m}^n).$$
Thus \eqref{eq:pointwise-differential} holds with both sides equal to zero. For the last assertion, the Leibniz rule gives $d_r(1_{\mathcal A})=0$. Hence $d_r(\lambda1_{\mathcal A})=\lambda d_r(1_{\mathcal A})=0$ for every $\lambda\in\mathbb F$.
\end{proof}

The assumption of Corollary~\ref{crite} is satisfied by both
$$\mathbb F[t_1,\ldots,t_{n+m}]\qquad\text{and}\qquad\mathbb F[t_1^{\pm1},\ldots,t_{n+m}^{\pm1}]$$
with the Euler derivations
$$d_r=t_r\frac{\partial}{\partial t_r},\qquad 1\leq r\leq n+m.$$
Indeed,
\begin{equation}\label{eq:euler-coordinate}
d_r(t_s)=\delta_{r,s}t_s.
\end{equation}
For prescribed ordered tuples $(u_2,\ldots,u_n)$ and $(v_1,\ldots,v_n)$, take
$$y_q=t_{u_q}\quad (2\leq q\leq n),\qquad x_p=t_{v_p}\quad (1\leq p\leq n).$$
By \eqref{eq:euler-coordinate}, the selected product is
$$\prod_{q=2}^{n}t_{u_q}\prod_{p=1}^{n}t_{v_p}.$$
It is a nonzero monomial and hence a regular element of either integral domain. A different pair of ordered tuples has at least one entry in a different position. Its differential product then contains a factor $d_r(t_s)$ with $r\ne s$, which is zero by \eqref{eq:euler-coordinate}. This verifies the assumption.

\begin{example}\label{ex:determinantal-examples}
The following choices illustrate the construction.
\begin{itemize}

\item[(i)]
Let $\mathcal A$ be a commutative associative algebra with mutually commuting derivations $d_1,\ldots,$ $d_{n+m}$. For fixed elements $y_j$, $1\leq j\leq m$, of $\mathcal A$, set $a_{i,j}=d_i(y_j)$. Then, by \cite{Filippov2}, $(\mathcal A,\cdot,[-,\ldots,-])$ is a Poisson $n$-Lie algebra.

\item[(ii)] Let $\mathcal A$ be a unital commutative associative algebra. Suppose that one column of $A$ is arbitrary, while each of the remaining columns has exactly one nonzero scalar entry, with these entries lying in pairwise distinct rows. Expanding the determinant along these scalar columns reduces the construction to the case in which $A$ has a single column. Hence the examples in \cite{newexample} yield Poisson $n$-Lie algebras of the present form.

\item[(iii)]
Suppose that $\mathcal A$ is unital, take $f\in\mathcal A$, and let $B\in M_{n+m,m}(\mathbb F)$. Set $A=fB$, and write $\pi_B^{\mathbf i}$ for the alternating complementary coordinate of $B$ corresponding to $\mathbf i$. Since every column is multiplied by $f$, multilinearity of the determinant gives $\pi^{\mathbf i}=f^m\pi_B^{\mathbf i}.$ Since every $\pi_B^{\mathbf i}$ is a scalar, one has $d_r\bigl(\pi^{\mathbf i}\bigr)=d_r(f^m)\pi_B^{\mathbf i}.$ Therefore the left-hand side of \eqref{eq:pointwise-differential} is $f^m d_{i_1}(f^m)\pi_B^{\mathbf i}\pi_B^{\mathbf j},$ while the right-hand side is
$$f^m d_{i_1}(f^m)\sum_{k=1}^{n}\pi_B^{(j_1,\ldots,j_{k-1},i_1,j_{k+1},\ldots,j_n)}\pi_B^{(j_k,i_2,\ldots,i_n)}=f^m d_{i_1}(f^m)\pi_B^{\mathbf i}\pi_B^{\mathbf j},$$
where the equality follows from \eqref{eq:plucker-exchange} applied to $B$. Thus  Corollary~\ref{cor:pointwise-sufficient} applies for every $f$ and every constant matrix $B$.

\item[(iv)]
Let $(\mathcal A,\cdot)$ be a unital commutative associative algebra equipped with mutually commuting derivations $d_1,\ldots,d_{n+m}$, and let $A\in M_{n+m,m}(\mathbb F)$. Then $(\mathcal A,\cdot,[-,\ldots,-])$ is a Poisson $n$-Lie algebra.
\end{itemize}
\end{example}

\section{Constructions between Poisson algebras and Poisson $n$-Lie algebras}

Now we start with a tensor-product construction that produces a Poisson $n$-Lie algebra from a given Poisson $n$-Lie algebra and a commutative associative algebra.

\begin{pro}\label{prop4.1}
Let $(\mathcal P,\cdot,[-,\ldots,-])$ be a Poisson $n$-Lie algebra, and let $(\mathcal B,\circ)$ be a commutative associative algebra. The product $\cdot_{\otimes}$ is associative and commutative since the associative products on $\mathcal P$ and $\mathcal B$ are both associative and commutative. Define operations $\cdot_{\otimes}$ and $[-,\ldots,-]_{\otimes}$ on $\mathcal{P}\otimes\mathcal{B}$ by
$$(x\otimes a)\cdot_{\otimes}(y\otimes b)=(x\cdot y)\otimes(a\circ b)$$
and
$$[x_1\otimes a_1,\ldots,x_n\otimes a_n]_{\otimes}=[x_1,\ldots,x_n]\otimes(a_1\circ\cdots\circ a_n).$$
Then $(\mathcal{P}\otimes\mathcal{B},\cdot_{\otimes},[-,\ldots,-]_{\otimes})$ is a Poisson $n$-Lie algebra.
\end{pro}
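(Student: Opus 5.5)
The plan is to check the three defining properties directly on pure tensors and extend by multilinearity. The first step is well-definedness. The map
$$(x_1,a_1,\ldots,x_n,a_n)\mapsto [x_1,\ldots,x_n]\otimes(a_1\circ\cdots\circ a_n)$$
is multilinear in each pair $(x_i,a_i)$, that is, bilinear in each slot. By the universal property of the tensor product, it therefore descends to an $n$-linear map on $\mathcal P\otimes\mathcal B$; the same reasoning applies to $\cdot_\otimes$. Commutativity and associativity of $\cdot_\otimes$ are immediate on pure tensors from those of $\cdot$ and $\circ$, as already noted in the statement.

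Next come skew-symmetry and the Leibniz rule, both checked on pure tensors. Permuting the arguments $x_i\otimes a_i$ multiplies $[x_1,\ldots,x_n]$ by the sign of the permutation and leaves $a_1\circ\cdots\circ a_n$ unchanged, because $\circ$ is commutative. For the Leibniz rule, take $y\otimes b$, $z\otimes c$ and $x_i\otimes a_i$. The left side is
$$[y\cdot z,x_2,\ldots,x_n]\otimes(b\circ c\circ a_2\circ\cdots\circ a_n).$$
Applying \eqref{eq2.2} in $\mathcal P$ splits it into
$$y\cdot[z,x_2,\ldots]\otimes(\cdots)+z\cdot[y,x_2,\ldots]\otimes(\cdots),$$
and these two terms are exactly $(y\otimes b)\cdot_\otimes[z\otimes c,\ldots]_\otimes+(z\otimes c)\cdot_\otimes[y\otimes b,\ldots]_\otimes$, using associativity and commutativity of $\circ$ to rearrange the $\mathcal B$-factors.

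The main step is the fundamental identity \eqref{eq2.1}, again on pure tensors $x_i\otimes a_i$ and $y_j\otimes b_j$. Write $\alpha=a_1\circ\cdots\circ a_{n-1}$ and $\beta=b_1\circ\cdots\circ b_n$. The left side equals $[x_1,\ldots,x_{n-1},[y_1,\ldots,y_n]]\otimes(\alpha\circ\beta)$. In the $i$-th summand on the right, the inner bracket contributes $\alpha\circ b_i$ and the outer one contributes the remaining $b_j$ with $j\neq i$. Using associativity and commutativity of $\circ$, every summand therefore carries the same $\mathcal B$-factor $\alpha\circ\beta$. Hence the right side is
$$\Bigl(\sum_i[y_1,\ldots,[x_1,\ldots,x_{n-1},y_i],\ldots,y_n]\Bigr)\otimes(\alpha\circ\beta),$$
and the fundamental identity in $\mathcal P$ gives equality.

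The only genuine care needed is this bookkeeping: all $\mathcal B$-factors must collapse to the single product $\alpha\circ\beta$ so that the common tensor factor can be pulled out. Once that is done, linearity extends all identities from pure tensors to all of $\mathcal P\otimes\mathcal B$. Non-unitality of $\mathcal B$ causes no problem, since no unit is used anywhere in the argument.
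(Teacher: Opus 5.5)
Your proposal is correct and follows the same route as the paper. Both check skew-symmetry, the fundamental identity and the Leibniz rule on pure tensors, using the corresponding identities in $\mathcal P$ together with associativity and commutativity of $\circ$, and then extend by multilinearity. Your write-up supplies the bookkeeping (collapsing every $\mathcal B$-factor to $\alpha\circ\beta$) that the paper leaves as a ``direct computation.''
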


\begin{proof}
The skew-symmetry and fundamental identity for $[-,\ldots,-]_{\otimes}$ follow directly from the corresponding properties of the bracket on $\mathcal{P}$ and the associativity  and commutativity of $\mathcal{B}$. Finally, a direct computation verifies the Leibniz rule.
\end{proof}

A useful link between Lie algebras and $n$-Lie algebras is provided by Leibniz $n$-algebras. Recall that a vector space $\mathcal{L}$ over a field $\mathbb{F}$, equipped with an $n$-linear bracket $[-,\ldots,-]\colon\mathcal{L}^{\otimes n}\to\mathcal{L}$, is called a \emph{Leibniz $n$-algebra} if it satisfies the fundamental identity \eqref{eq2.1}, without any skew-symmetry assumption (see~\cite{Casas}). Thus Leibniz $n$-algebras generalize $n$-Lie algebras. For $n=2$, one recovers Leibniz algebras, which in turn generalize Lie algebras (see~\cite{Loday}).

The following result recalls a construction from~\cite{Casas} that associates a Leibniz $n$-algebra with every Leibniz algebra.

\begin{pro}\label{prop4.2}
Let $(\mathcal{L},[-,-])$ be a Leibniz algebra. Then $\mathcal{L}$ becomes a Leibniz $n$-algebra under the $n$-ary bracket
$$[x_1,\ldots,x_n]:=[x_1,[x_2,\ldots,[x_{n-1},x_n]\ldots]],\qquad x_1,\ldots,x_n\in\mathcal{L}.$$
\end{pro}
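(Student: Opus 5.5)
The plan is to reduce the fundamental identity \eqref{eq2.1} for the $n$-ary bracket to the single defining identity of a Leibniz algebra. The tool is the fact that in a Leibniz algebra multiplication by one fixed element, on the appropriate side, is a derivation. The Leibniz $n$-algebra structure of \cite{Casas} has its conventions for the binary Leibniz identity and for the $n$-ary fundamental identity chosen compatibly. The first step is to fix these conventions so that the $n-1$ "acting" arguments $x_1,\ldots,x_{n-1}$ are absorbed into one element
\[
w=[x_1,[x_2,\ldots,[x_{n-2},x_{n-1}]\ldots]]\in\mathcal L
\]
(for $n=2$, simply $w=x_1$), and so that acting by $(x_1,\ldots,x_{n-1})$ on any $z$ is the single operator $\delta_w$, multiplication by $w$:
\[
[z,x_1,\ldots,x_{n-1}]=\delta_w(z).
\]
The Leibniz identity then says exactly that $\delta_w$ is a derivation of $(\mathcal L,[-,-])$:
\[
\delta_w[a,b]=[\delta_w a,b]+[a,\delta_w b].
\]

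This first step is also where I expect the main obstacle. With the naive left-normed reading, in which $x_1,\ldots,x_{n-1}$ act through the composite $L_{x_1}\cdots L_{x_{n-1}}$, the composite is not a derivation. Expanding it produces cross terms in which $x_1$ hits one $y_i$ and $x_{n-1}$ hits a different $y_j$. These cross terms do not appear on the right-hand side of \eqref{eq2.1}, and they do not cancel in a free Lie algebra. So I would check carefully against \cite{Casas} that the nesting and the side of the Leibniz identity are matched as described above. If they are not, I would reorder the slots of the bracket (equivalently, pass to the opposite Leibniz algebra) so that the matching holds. The auxiliary identity to verify is that multiplication by an element of the form $[a,b]$ equals the commutator of the multiplications by $a$ and $b$, in the appropriate order. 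In particular, multiplication by $[a,a]$, and by $[a,b]+[b,a]$, is zero. This is what lets the nested element $w$ behave as one operator.

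Once the conventions are fixed, the main computation is an induction on the nesting depth. Apply the derivation $\delta_w$ to the nested word $[y_1,[y_2,\ldots,[y_{n-1},y_n]\ldots]]$ and peel off one bracket at a time. Each application of the derivation rule splits off one term in which $\delta_w$ hits the outer letter $y_i$, and passes $\delta_w$ inward to the remaining word. After $n-1$ steps this gives
\[
\delta_w[y_1,[y_2,\ldots,y_n]]=\sum_{i=1}^{n}[y_1,[\ldots,[\delta_w(y_i),\ldots,y_n]\ldots]].
\]
Rewriting $\delta_w(y_i)$ back as the $n$-ary bracket of $y_i$ with $x_1,\ldots,x_{n-1}$, and $\delta_w$ of the nested word as the $n$-ary bracket of $[y_1,\ldots,y_n]$ with $x_1,\ldots,x_{n-1}$, turns this display into precisely the fundamental identity. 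No skew-symmetry is used anywhere, which is consistent with the conclusion being only a Leibniz $n$-algebra. The induction itself is routine; the only care needed is at the innermost step $[y_{n-1},y_n]$, where both letters receive $\delta_w$.
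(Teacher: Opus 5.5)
The paper gives no proof of this proposition; it only recalls the result from \cite{Casas}. Your argument is the standard one behind that citation, and it is correct with the conventions you actually use. By definition of the nested bracket, $[z,x_1,\ldots,x_{n-1}]=[z,w]$ with $w=[x_1,[x_2,\ldots,[x_{n-2},x_{n-1}]\ldots]]$. Right multiplications are derivations exactly when $\mathcal L$ satisfies the right Leibniz identity $[[a,b],c]=[[a,c],b]+[a,[b,c]]$. Peeling the nested word then gives
\[
[[y_1,\ldots,y_n],x_1,\ldots,x_{n-1}]=\sum_{i=1}^{n}[y_1,\ldots,[y_i,x_1,\ldots,x_{n-1}],\ldots,y_n].
\]

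State explicitly that this is the right-handed fundamental identity used in \cite{Casas}, not \eqref{eq2.1}. Your closing sentence (``precisely the fundamental identity'') blurs the very point you flagged earlier.

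Your diagnosis is right: the ``naive reading'' you describe is exactly what \eqref{eq2.1} gives for the stated bracket, and under it the statement is false even for Lie algebras. Take $\mathfrak{sl}_2$ with $[h,e]=2e$, $[h,f]=-2f$, $[e,f]=h$, and let $n=3$, $x_1=x_2=h$, $(y_1,y_2,y_3)=(e,e,f)$. Then the left side of \eqref{eq2.1} is $-8e$, while each of the three terms on the right is $-8e$, giving $-24e$.

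So do not leave the remedy conditional. State and prove the right-handed version for a right Leibniz algebra. Reordering slots to reach \eqref{eq2.1} would replace the bracket by $[x_n,[x_{n-1},\ldots,[x_2,x_1]\ldots]]$, which is no longer the bracket in the statement. The right-handed version is also all that Proposition~\ref{prop4.3} needs: there the induced bracket on the quotient is skew-symmetric, and for skew-symmetric brackets the two forms of the identity coincide.

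Finally, the auxiliary identity about multiplication by $[a,b]$ plays no role in your argument. The element $w$ acts as a single operator by definition, and no commutator relation enters.
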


We next recall the standard Poisson structure on the tensor product of two Poisson algebras (see~\cite{Xu}). Let $(\mathcal{P}_1,\cdot_1,[-,-]_1)$ and $(\mathcal{P}_2,\cdot_2,[-,-]_2)$ be Poisson algebras. Their tensor product $\mathcal{P}=\mathcal{P}_1\otimes\mathcal{P}_2$ is a Poisson algebra with the componentwise associative product and the bracket
$$[x_1\otimes y_1,x_2\otimes y_2]_{\otimes}=[x_1,x_2]_1\otimes(y_1\cdot_2y_2)+(x_1\cdot_1x_2)\otimes[y_1,y_2]_2.$$

For $x_1,\ldots,x_n\in\mathcal{P}$, define an $n$-ary bracket on $\mathcal{P}$ by
\begin{equation}\label{eq3.1}
[x_1,\ldots,x_n]:=[x_1,[x_2,\ldots,[x_{n-1},x_n]_{\otimes}\ldots]_{\otimes}]_{\otimes}.
\end{equation}
Let $\mathcal{I}$ be the Poisson ideal of $\mathcal{P}$ generated by the elements
$$[x_1,\ldots,x_i,\ldots,x_j,\ldots,x_n]+[x_1,\ldots,x_j,\ldots,x_i,\ldots,x_n],\qquad 1\leq i<j\leq n,$$
where $x_1,\ldots,x_n\in\mathcal{P}$.

\begin{pro}\label{prop4.3}
With the notation above, the bracket
$[\overline{x}_1,\ldots,\overline{x}_n]:=\overline{[x_1,\ldots,x_n]}$
is well defined on $\mathcal{P}/\mathcal{I}$ and makes it into a Poisson $n$-Lie algebra.
\end{pro}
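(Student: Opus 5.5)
We will verify in turn that the bracket descends to $\mathcal P/\mathcal I$, that the quotient is commutative associative, that the induced bracket is skew-symmetric, and that it satisfies the Leibniz rule and the fundamental identity.

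\textbf{Step 1: well-definedness and the associative part.} By the formula above, $\mathcal P=\mathcal P_1\otimes\mathcal P_2$ is a Poisson algebra. Since $\mathcal I$ is a Poisson ideal, it is closed under $\cdot$ and under $[\mathcal P,-]_\otimes$; by skew-symmetry of the binary bracket it is also closed under $[-,\mathcal P]_\otimes$. The $n$-ary bracket \eqref{eq3.1} is an iterated binary bracket. Hence if one argument $x_k$ is replaced by $x_k+u$ with $u\in\mathcal I$, the value changes by an element of $\mathcal I$: the element $u$ sits inside nested binary brackets, each of which preserves $\mathcal I$. So the bracket on $\mathcal P/\mathcal I$ is well defined. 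The quotient $(\mathcal P/\mathcal I,\cdot)$ is commutative associative, because $\mathcal I$ is an associative ideal.

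\textbf{Step 2: skew-symmetry and Leibniz rule.} The generators of $\mathcal I$ make $\overline{[x_1,\ldots,x_i,\ldots,x_j,\ldots,x_n]}=-\overline{[x_1,\ldots,x_j,\ldots,x_i,\ldots,x_n]}$. Thus the induced bracket is skew-symmetric under all transpositions, hence alternating, since $\operatorname{char}$ is not $2$ is implicit; we will note that the ground field has characteristic zero as in Section~\ref{sec:determinantal-construction}. For the Leibniz rule, it is easiest to work in the last slot. The map $x_n\mapsto[x_1,\ldots,x_{n-1},x_n]$ is the composite $\operatorname{ad}_{x_1}\circ\cdots\circ\operatorname{ad}_{x_{n-1}}$, but a composite of derivations is not a derivation, so this must be handled carefully. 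Instead we use the \emph{first} slot together with skew-symmetry in the quotient. Note that $[y\cdot z,w]_\otimes=y[z,w]_\otimes+z[y,w]_\otimes$ with $w=[x_2,\ldots,x_n]$, where $w$ does not depend on $y,z$. Hence the Leibniz rule \eqref{eq2.2} already holds in $\mathcal P$ for the first argument, and therefore in the quotient.

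\textbf{Step 3: fundamental identity.} Proposition~\ref{prop4.2} applies, because a Lie algebra is a Leibniz algebra. So $(\mathcal P,[-,\ldots,-])$ is a Leibniz $n$-algebra, that is, \eqref{eq2.1} holds in $\mathcal P$. Projecting to $\mathcal P/\mathcal I$ then gives \eqref{eq2.1} there. We must check that the ordering convention of Proposition~\ref{prop4.2}, with the outer arguments on the left, matches \eqref{eq2.1}. If it corresponds to the right-handed version instead, we will use skew-symmetry in the quotient, established in Step 2, to convert it. The conversion moves the inner bracket from the last to the first position, exactly as in \eqref{eq3.6}.

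The main obstacle is Step 3's bookkeeping: confirming that the identity from \cite{Casas} for the left-nested bracket is exactly \eqref{eq2.1} in the form stated. If it is not, a direct verification is needed. In that case we write $[x_1,\ldots,x_{n-1},w]=D(w)$ with $D=\operatorname{ad}_{x_1}\cdots\operatorname{ad}_{x_{n-1}}$. We then show that $D$ acts as a derivation of the iterated bracket modulo $\mathcal I$, using the binary Jacobi identity and the skew-symmetry available in the quotient.
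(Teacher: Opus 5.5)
Your proposal is correct and follows the paper's proof step for step. The Poisson ideal makes the nested bracket descend, the generators of $\mathcal I$ give skew-symmetry, and the binary Leibniz rule with $X=[x_2,[\ldots,[x_{n-1},x_n]_\otimes\ldots]_\otimes]_\otimes$ held fixed gives \eqref{eq2.2}. Proposition~\ref{prop4.2} then supplies the fundamental identity.

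Your hedge in Step 3 resolves to the right-handed case. The map $z\mapsto[z,w]_\otimes$, with $w=[y_2,[\ldots,[y_{n-1},y_n]_\otimes\ldots]_\otimes]_\otimes$, is a derivation of the Lie bracket, so \eqref{eq3.6} already holds in $\mathcal P$. By contrast, \eqref{eq2.1} need not hold in $\mathcal P$, because $\operatorname{ad}_{x_1}\cdots\operatorname{ad}_{x_{n-1}}$ is not a derivation. So the conversion via skew-symmetry in $\mathcal P/\mathcal I$ that you planned is genuinely needed; the paper's proof passes over this point.
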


\begin{proof}
Since $(\mathcal{P},[-,-]_{\otimes})$ is a Lie algebra, Proposition~\ref{prop4.2} implies that the bracket defined by~\eqref{eq3.1} satisfies the fundamental identity, although it need not be skew-symmetric. Because $\mathcal{I}$ is a Poisson ideal, the nested bracket descends to $\mathcal{P}/\mathcal{I}$, by the definition of $\mathcal{I}$, the induced $n$-ary bracket is skew-symmetric. Hence it defines an $n$-Lie algebra structure on the quotient.

It remains to verify compatibility with the associative product. Put
$$X=[x_2,[x_3,\ldots,[x_{n-1},x_n]_{\otimes}\ldots]_{\otimes}]_{\otimes}.$$
The Leibniz rule for the binary Poisson bracket gives
\begin{align*}
[yz,x_2,\ldots,x_n]
&=[yz,X]_{\otimes}\\
&=y[z,X]_{\otimes}+z[y,X]_{\otimes}\\
&=y[z,x_2,\ldots,x_n]+z[y,x_2,\ldots,x_n].
\end{align*}
Thus the $n$-ary bracket satisfies the Leibniz rule in its first argument. Since the induced bracket on $\mathcal{P}/\mathcal{I}$ is skew-symmetric, it satisfies the Leibniz rule in every argument. Therefore $\mathcal{P}/\mathcal{I}$ is a Poisson $n$-Lie algebra.
\end{proof}

In particular, taking $\mathcal P_1=\mathcal P$ and $\mathcal P_2=\mathcal P^c$, where $\mathcal P^c$ denotes an isomorphic copy of $\mathcal P$, Proposition~\ref{prop4.3} yields a Poisson $n$-Lie algebra structure on $(\mathcal P\otimes\mathcal P^c)/\mathcal I$.

We now turn to the construction of ordinary Poisson algebras from Poisson $n$-Lie algebras. For this purpose, we recall the tensor construction that associates a Leibniz algebra with an $n$-Lie algebra~\cite{Daletskii}.

Let $(\mathcal{L},[-,\ldots,-])$ be an $n$-Lie algebra. For pure
tensors
$$x=x_1\otimes\cdots\otimes x_{n-1},\qquad y=y_1\otimes\cdots\otimes y_{n-1}$$
in $\mathcal{L}^{\otimes(n-1)}$, define
$$[x,y]_{\otimes}:=\sum_{i=1}^{n-1}y_1\otimes\cdots\otimes[x_1,\ldots,x_{n-1},y_i]\otimes\cdots\otimes y_{n-1},$$
and extend this operation bilinearly to $\mathcal{L}^{\otimes(n-1)}$.

\begin{pro}\label{prop3.4}
The bracket $[-,-]_{\otimes}$ makes $\mathcal{L}^{\otimes(n-1)}$ into a Leibniz algebra.
\end{pro}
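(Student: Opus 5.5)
We have to check the left Leibniz identity
\[
[x,[y,z]_{\otimes}]_{\otimes}=[[x,y]_{\otimes},z]_{\otimes}+[y,[x,z]_{\otimes}]_{\otimes}
\]
for all $x,y,z\in\mathcal L^{\otimes(n-1)}$. The key observation is that $[x,-]_{\otimes}$ is the natural extension of the inner derivation $D_x:=[x_1,\ldots,x_{n-1},-]$ of $\mathcal L$ to the tensor power. Both sides are trilinear, so it suffices to take pure tensors $x,y,z$.

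\textbf{Step 1: derivations of the tensor power.} For any linear map $D\colon\mathcal L\to\mathcal L$, set
\[
\widehat D:=\sum_{i=1}^{n-1}\mathrm{id}\otimes\cdots\otimes D\otimes\cdots\otimes\mathrm{id}
\]
on $\mathcal L^{\otimes(n-1)}$, with $D$ in the $i$-th position. Then $[x,y]_{\otimes}=\widehat{D_x}(y)$. The map $D\mapsto\widehat D$ is linear. It is also a Lie algebra homomorphism from $\mathfrak{gl}(\mathcal L)$ to $\mathfrak{gl}(\mathcal L^{\otimes(n-1)})$:
\[
[\widehat D,\widehat E]=\widehat{[D,E]}.
\]
This holds because the cross terms with $D$ and $E$ in different slots commute and cancel in the commutator. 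Only the same-slot terms survive, and they give $[D,E]$ in that slot.

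\textbf{Step 2: use the fundamental identity.} The fundamental identity \eqref{eq2.1} says exactly that each $D_x$ is a derivation of the $n$-ary bracket. Applying it with $y_n$ replaced by an arbitrary element $w$ gives
\[
[D_x,D_y](w)=D_x D_y(w)-D_y D_x(w)=\sum_{i=1}^{n-1}[y_1,\ldots,D_x(y_i),\ldots,y_{n-1},w].
\]
The right-hand side is $\sum_i D_{y^{(i)}}(w)$, where $y^{(i)}$ is $y$ with $D_x$ applied in slot $i$. By linearity of $u\mapsto D_u$ in $u\in\mathcal L^{\otimes(n-1)}$, this sum equals $D_{[x,y]_{\otimes}}(w)$, since $[x,y]_{\otimes}=\sum_i y^{(i)}$. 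Hence
\[
D_{[x,y]_{\otimes}}=[D_x,D_y].
\]

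\textbf{Step 3: conclude.} Applying the left Leibniz identity to $z$ and combining Steps 1 and 2:
\[
[x,[y,z]_{\otimes}]_{\otimes}-[y,[x,z]_{\otimes}]_{\otimes}=[\widehat{D_x},\widehat{D_y}](z)=\widehat{[D_x,D_y]}(z)=\widehat{D_{[x,y]_{\otimes}}}(z)=[[x,y]_{\otimes},z]_{\otimes}.
\]
This is exactly the left Leibniz identity, so $\mathcal L^{\otimes(n-1)}$ is a (left) Leibniz algebra.

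The main care point is Step 2: the map $u\mapsto D_u$ must be defined on all of $\mathcal L^{\otimes(n-1)}$ via the universal property of the tensor product. This is legitimate because the bracket is multilinear. Skew-symmetry of the $n$-Lie bracket is not needed.
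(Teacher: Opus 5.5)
Your proof is correct and complete. You identify $[x,-]_{\otimes}$ with the tensor extension $\widehat{\operatorname{ad}_x}$ and note that $D\mapsto\widehat D$ is a Lie homomorphism $\mathfrak{gl}(\mathcal L)\to\mathfrak{gl}(\mathcal L^{\otimes(n-1)})$. You then read the fundamental identity as $\operatorname{ad}_{[x,y]_{\otimes}}=[\operatorname{ad}_x,\operatorname{ad}_y]$. Together these give exactly the left Leibniz identity, which matches the paper's convention (\eqref{eq2.1} with $n=2$).

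The paper gives no proof of this proposition; it is simply quoted from Daletskii's tensor construction. So there is no argument of the paper's to compare yours against. Your route is the standard conceptual one, and it works with the same map $\operatorname{ad}$ that the paper introduces immediately afterward.

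Your approach also has a bonus. The identity $\operatorname{ad}_{[x,y]_{\otimes}}=[\operatorname{ad}_x,\operatorname{ad}_y]$ immediately yields the paper's next, unproved assertion that $\ker(\operatorname{ad})$ is a two-sided ideal of $\widetilde{\mathcal L}$. It also shows that $\ker(\operatorname{ad})$ contains every $[x,y]_{\otimes}+[y,x]_{\otimes}$, and hence contains $\mathcal J$.
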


We denote the Leibniz algebra in Proposition~\ref{prop3.4} by $\widetilde{\mathcal{L}}$. Define
$$\operatorname{ad}\colon\mathcal{L}^{\otimes(n-1)}\longrightarrow\operatorname{End}(\mathcal{L})$$
by $\operatorname{ad}_{x}(y)=[x_1,\ldots,x_{n-1},y],$ with $x=x_1\otimes\cdots\otimes x_{n-1}$ and $y\in\mathcal{L}.$ Then
$$\ker(\operatorname{ad})=\{x\in\mathcal{L}^{\otimes(n-1)}\mid\operatorname{ad}_{x}(y)=0\text{ for all }y\in\mathcal{L}\}$$
is an ideal of $\widetilde{\mathcal{L}}$. Moreover,
$$\mathcal{J}=\operatorname{span}\{[x,y]_{\otimes}+[y,x]_{\otimes}\mid x,y\in\mathcal{L}^{\otimes(n-1)}\}$$
is an ideal of $\widetilde{\mathcal{L}}$, and $\widetilde{\mathcal{L}}/\mathcal{J}$ is a Lie algebra. 

Let now $\mathcal{P}$ be a Poisson $n$-Lie algebra. The tensor power $\widetilde{\mathcal{P}}=\mathcal{P}^{\otimes(n-1)}$ is a commutative associative algebra under the componentwise product
$$
(x_1\otimes\cdots\otimes x_{n-1})\cdot(y_1\otimes\cdots\otimes y_{n-1})
:=x_1y_1\otimes\cdots\otimes x_{n-1}y_{n-1}.
$$
We next adapt the preceding construction to the Poisson setting.

\begin{pro}
Let $\mathcal{P}$ be a Poisson $n$-Lie algebra, and write  $\widetilde{\mathcal{P}}=\mathcal{P}^{\otimes(n-1)}.$ Let $\mathcal{J}$ be the ideal of $\widetilde{\mathcal{P}}$ with respect to both the associative product $\cdot$ and the Leibniz bracket $[-,-]_{\otimes}$ generated by all elements of the form $[x,y]_{\otimes}+[y,x]_{\otimes},$ $x,y\in\widetilde{\mathcal{P}}.$ Then the operations induced by $\cdot$ and $[-,-]_{\otimes}$ make $\widetilde{\mathcal{P}}/\mathcal{J}$ into a Poisson algebra.
\end{pro}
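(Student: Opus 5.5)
Our plan is to mirror the proof of Proposition~\ref{prop4.3}, with the roles of the binary and $n$-ary structures exchanged. We verify separately the associative part, the Lie part, and the compatibility (Leibniz) rule on the quotient. Throughout we use that $\mathcal J$ is, by definition, a two-sided ideal for both operations. Hence both $\cdot$ and $[-,-]_{\otimes}$ descend to well-defined operations on $\widetilde{\mathcal P}/\mathcal J$.

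\textbf{Step 1 (associative part).} The componentwise product on $\widetilde{\mathcal P}=\mathcal P^{\otimes(n-1)}$ is commutative and associative, because each factor is. Since $\mathcal J$ is an ideal for $\cdot$, the quotient $(\widetilde{\mathcal P}/\mathcal J,\cdot)$ is a commutative associative algebra.

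\textbf{Step 2 (Lie part).} By Proposition~\ref{prop3.4}, $(\widetilde{\mathcal P},[-,-]_{\otimes})$ is a Leibniz algebra. The ideal $\mathcal J$ contains all symmetrizers $[x,y]_{\otimes}+[y,x]_{\otimes}$, so the induced bracket on the quotient is skew-symmetric. A skew-symmetric Leibniz algebra satisfies the Jacobi identity, so $(\widetilde{\mathcal P}/\mathcal J,[-,-]_{\otimes})$ is a Lie algebra. This is the same argument that shows $\widetilde{\mathcal L}/\mathcal J$ is Lie, now applied with the larger ideal.

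\textbf{Step 3 (Leibniz rule).} This is the main computation. It suffices to check on pure tensors that
\[
[x,\,y\cdot z]_{\otimes}=[x,y]_{\otimes}\cdot z+y\cdot[x,z]_{\otimes},
\]
that is, that $[x,-]_{\otimes}$ is a derivation of the componentwise product. The rule in the other argument then follows modulo $\mathcal J$ by skew-symmetry. Write $D=\operatorname{ad}_{x_1\otimes\cdots\otimes x_{n-1}}$, which is a derivation of $(\mathcal P,\cdot)$ by the Leibniz rule \eqref{eq2.2} and skew-symmetry. Then
\[
[x,y]_{\otimes}=\sum_{i}y_1\otimes\cdots\otimes D(y_i)\otimes\cdots\otimes y_{n-1},
\]
so $[x,-]_{\otimes}$ is the operator $\sum_i 1\otimes\cdots\otimes D\otimes\cdots\otimes 1$. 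Expanding $D(y_iz_i)=D(y_i)z_i+y_iD(z_i)$ in each slot and regrouping gives exactly $[x,y]_{\otimes}\cdot z+y\cdot[x,z]_{\otimes}$. Since a sum of derivations acting on separate tensor factors is a derivation of the tensor product algebra, this identity holds on $\widetilde{\mathcal P}$ itself, before passing to the quotient.

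\textbf{Main obstacle.} The delicate point is the derivation property in the \emph{first} argument, $[y\cdot z,w]_{\otimes}$. Here the map $y\mapsto\operatorname{ad}_y$ is not a derivation on the nose: $\operatorname{ad}_{y\cdot z}$ involves $[y_1z_1,\ldots,y_{n-1}z_{n-1},w_i]$, which expands via \eqref{eq2.2} into $2^{n-1}$ mixed terms. We will not verify this directly. Instead we obtain it from the second-argument rule of Step~3 together with skew-symmetry in the quotient:
\[
[y z,w]_{\otimes}\equiv-[w,yz]_{\otimes}=-[w,y]_{\otimes}z-y[w,z]_{\otimes}\equiv[y,w]_{\otimes}z+y[z,w]_{\otimes}\pmod{\mathcal J}.
\]
This uses only that $\mathcal J$ is an ideal for the product. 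We will double-check that the cosets used here are compatible, i.e.\ that multiplying an element of $\mathcal J$ by $z$ stays in $\mathcal J$, which holds by the definition of $\mathcal J$. Combining Steps 1–3 shows that $\widetilde{\mathcal P}/\mathcal J$ is a Poisson algebra.
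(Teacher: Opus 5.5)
Your proposal is correct and follows essentially the same route as the paper. Both arguments descend the two operations to the quotient and obtain the Lie structure from Proposition~\ref{prop3.4} together with skew-symmetry modulo $\mathcal J$. Both then prove that $[w,-]_{\otimes}$ is a derivation of the componentwise product by expanding the Leibniz rule of $\mathcal P$ slot by slot, and transfer this to the first argument via skew-symmetry. Your observation is a slightly cleaner packaging of the paper's computation: the second-argument identity already holds in $\widetilde{\mathcal P}$, as a sum of slotwise derivations, whereas the first-argument rule holds only modulo $\mathcal J$.
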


\begin{proof}
Since $\mathcal{J}$ is an ideal with respect to both operations, the product and the bracket descend to $\widetilde{\mathcal{P}}/\mathcal{J}$. The induced product is commutative and associative. By Proposition~\ref{prop3.4}, the bracket $[-,-]_{\otimes}$ satisfies the Leibniz identity. Moreover, the definition of $\mathcal{J}$ implies that the induced bracket is skew-symmetric. A skew-symmetric Leibniz bracket satisfies the Jacobi identity. Hence the induced bracket makes $\widetilde{\mathcal{P}}/\mathcal{J}$ into a Lie algebra.

It remains to verify the Leibniz rule. By bilinearity, it suffices to consider pure tensors
$$
x=x_1\otimes\cdots\otimes x_{n-1},\qquad
y=y_1\otimes\cdots\otimes y_{n-1},\qquad
z=z_1\otimes\cdots\otimes z_{n-1}.
$$
In the following computation, all equalities are understood in $\widetilde{\mathcal{P}}/\mathcal{J}$. Using the skew-symmetry of the induced bracket and the Leibniz rule in $\mathcal{P}$, we obtain
\begin{align*}
[x\cdot y,z]_{\otimes}
&=-[z,x\cdot y]_{\otimes}\\
&=-\sum_{i=1}^{n-1}x_1y_1\otimes\cdots\otimes[z_1,\ldots,z_{n-1},x_i y_i]\otimes\cdots\otimes x_{n-1}y_{n-1}\\
&=-\sum_{i=1}^{n-1}x_1y_1\otimes\cdots\otimes x_i[z_1,\ldots,z_{n-1},y_i]\otimes\cdots\otimes x_{n-1}y_{n-1}\\
&\quad-\sum_{i=1}^{n-1}x_1y_1\otimes\cdots\otimes y_i[z_1,\ldots,z_{n-1},x_i]\otimes\cdots\otimes x_{n-1}y_{n-1}.
\end{align*}
By the commutativity of the product in $\mathcal{P}$, the last expression can be rewritten as
$$-x\cdot[z,y]_{\otimes}-y\cdot[z,x]_{\otimes}.$$
Applying skew-symmetry once more yields
$$[x\cdot y,z]_{\otimes}=x\cdot[y,z]_{\otimes}+y\cdot[x,z]_{\otimes}.$$
Thus the induced bracket satisfies the Leibniz rule. Therefore, $\widetilde{\mathcal{P}}/\mathcal{J}$ is a Poisson algebra.
\end{proof}

\section{Solvable Poisson $n$-Lie Algebras}

In this section, we investigate solvability and nilpotency for Poisson $n$-Lie algebras. We establish analogues of Engel's theorem and Lie's theorem, characterize solvability in terms of the square ideal, and study solvable extensions determined by maximal hypo-nilpotent ideals. The latter play, for $n>2$, a role analogous to that of the nilradical in the structure theory of solvable Poisson algebras.

Let $\mathcal P=(\mathcal P,\cdot,[-,\ldots,-])$ be a Poisson $n$-Lie algebra. We write
$$
\mathcal P_A=(\mathcal P,\cdot)
\qquad\text{and}\qquad
\mathcal P_L=(\mathcal P,[-,\ldots,-])
$$
for its underlying commutative associative algebra and its underlying $n$-Lie algebra, respectively. For $x\in\mathcal P$, let $P_x\in\operatorname{End}(\mathcal P)$ denote left multiplication by $x$. For
$X=x_1\wedge\cdots\wedge x_{n-1}\in\bigwedge^{n-1}\mathcal P$, set
$$
Q_X(z)=[x_1,\ldots,x_{n-1},z],
\qquad z\in\mathcal P.
$$
Thus $Q_X$ is the inner derivation associated with $X$.

A subspace of $\mathcal P$ is a subalgebra, respectively an ideal, of the Poisson $n$-Lie algebra if it is simultaneously a subalgebra, respectively an ideal, of both $\mathcal P_A$ and $\mathcal P_L$.

Let $\mathcal I$ be an ideal of $\mathcal P$. Its derived series and lower central series are defined by
\begin{alignat}{2}
\mathcal I^{(1)}&=\mathcal I,
&\qquad
\mathcal I^{(k+1)}
&=[\mathcal I^{(k)},\mathcal I^{(k)},\mathcal P,\ldots,\mathcal P]
+\mathcal I^{(k)}\cdot\mathcal I^{(k)},
\label{eq5.1}\\
\mathcal I^1&=\mathcal I,
&
\mathcal I^{k+1}
&=[\mathcal I^k,\mathcal I,\mathcal P,\ldots,\mathcal P]
+\mathcal I^k\cdot\mathcal I,
\qquad k\geq1.
\label{eq5.2}
\end{alignat}
Here and below, the unfilled entries of an $n$-fold bracket are occupied by copies of $\mathcal P$. Each term in these two series is an ideal of $\mathcal P$.

\begin{defn}\label{defn5.3}
An ideal $\mathcal I$ of $\mathcal P$ is called \emph{solvable} if $\mathcal I^{(s)}=0$ for some $s\geq1$, and \emph{nilpotent} if $\mathcal I^s=0$ for some $s\geq1$. The least such $s$ is called the \emph{solvability index}, respectively the \emph{nilpotency index}, of $\mathcal I$.
\end{defn}

If $\mathcal P$ is solvable, then $\mathcal P_L$ is solvable and $\mathcal P_A$ is nilpotent. Solvability and nilpotency are inherited by subalgebras and quotient algebras.

We first record several elementary relations between the two series.

\begin{lem}\label{lem5.1}
Let $\mathcal P$ be a Poisson $n$-Lie algebra. For $i,j\geq1$ and
$i_1,\ldots,i_k\geq1$, where $1\leq k\leq n$, one has
\begin{equation}\label{eq5.3}
\begin{aligned}
\mathcal P^i\cdot\mathcal P^j
&\subseteq\mathcal P^{i+j},\\
[\mathcal P^{i_1},\ldots,\mathcal P^{i_k},
  \mathcal P,\ldots,\mathcal P]
&\subseteq
\mathcal P^{i_1+\cdots+i_k-k+2},\\
\mathcal P^{(i)}
&\subseteq\mathcal P^{2^{i-1}},\\
(\mathcal P^{(i)})^{(j)}
&=\mathcal P^{(i+j-1)}.
\end{aligned}
\end{equation}
\end{lem}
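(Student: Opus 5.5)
We prove the four relations of \eqref{eq5.3} in order; each one uses only the definitions \eqref{eq5.1}, \eqref{eq5.2} and the relations already proved. Two trivial facts are used throughout.

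First, each $\mathcal P^k$ is an ideal, and $\mathcal P^{k+1}\subseteq\mathcal P^k$ by induction on $k$.

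Second, directly from \eqref{eq5.2},
$$[\mathcal P^k,\mathcal P,\ldots,\mathcal P]\subseteq\mathcal P^{k+1}\quad\text{and}\quad\mathcal P^k\cdot\mathcal P\subseteq\mathcal P^{k+1},$$
because $\mathcal P^1=\mathcal P$.

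\emph{Step 1: $\mathcal P^i\cdot\mathcal P^j\subseteq\mathcal P^{i+j}$.} We induct on $j$, with the statement held for all $i$ simultaneously. The case $j=1$ is the second trivial fact. For the step, $\mathcal P^{j+1}$ is spanned by elements $y\cdot p$ and $[y,p_2,\ldots,p_n]$ with $y\in\mathcal P^j$. Let $x\in\mathcal P^i$.

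For the product generator, associativity gives $x\cdot(y\cdot p)=(x\cdot y)\cdot p\in\mathcal P^{i+j}\cdot\mathcal P\subseteq\mathcal P^{i+j+1}$.

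For the bracket generator, the Leibniz rule \eqref{eq2.2} gives
$$x\cdot[y,p_2,\ldots,p_n]=[x\cdot y,p_2,\ldots,p_n]-y\cdot[x,p_2,\ldots,p_n].$$
The first term lies in $[\mathcal P^{i+j},\mathcal P,\ldots]\subseteq\mathcal P^{i+j+1}$. The second lies in $\mathcal P^j\cdot\mathcal P^{i+1}$. Since the inductive hypothesis for this fixed $j$ holds for every $i$, in particular for $i+1$, this is contained in $\mathcal P^{i+j+1}$.

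\emph{Step 2: the bracket inclusion.} This is the main obstacle. We first treat $k=2$, namely $[\mathcal P^i,\mathcal P^j,\mathcal P,\ldots,\mathcal P]\subseteq\mathcal P^{i+j}$, by induction on $j$ for all $i$; the case $j=1$ is the first trivial fact. Let $x\in\mathcal P^i$ and $X=x\wedge p_3\wedge\cdots\wedge p_n$. By skew-symmetry it suffices to show $Q_X(w)\in\mathcal P^{i+j+1}$ for each generator $w$ of $\mathcal P^{j+1}$.

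If $w=[y,q_2,\ldots,q_n]$ with $y\in\mathcal P^j$, the fundamental identity \eqref{eq2.1} says that $Q_X$ is a derivation of the bracket:
$$Q_X(w)=[Q_X(y),q_2,\ldots,q_n]+\sum_{t}[y,\ldots,Q_X(q_t),\ldots].$$
By induction $Q_X(y)\in\mathcal P^{i+j}$, so the first term lies in $\mathcal P^{i+j+1}$. Each remaining term lies in $[\mathcal P^j,\mathcal P^{i+1},\mathcal P,\ldots]$, which is again covered by the hypothesis at level $j$ (applied with $i+1$ in place of $i$).

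If $w=y\cdot z$, the Leibniz rule gives $Q_X(y\cdot z)=y\cdot Q_X(z)+z\cdot Q_X(y)$. The first term lies in $\mathcal P^j\cdot\mathcal P^{i+1}$ and the second in $\mathcal P\cdot\mathcal P^{i+j}$, and both are contained in $\mathcal P^{i+j+1}$ by Step 1.

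For general $k$, we induct on $k$ and, inside that, on $i_k$. When $i_k=1$ the statement reduces to the case $k-1$, whose bound is exactly one larger, and the containment $\mathcal P^{s+1}\subseteq\mathcal P^s$ absorbs the difference. The inductive step in $i_k$ repeats the argument above: the derivation property of $Q_X$ handles bracket generators and the Leibniz rule together with Step 1 handles product generators. The only delicate point is tracking the exponents, each application raising the index by exactly one.

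\emph{Steps 3 and 4.} For $\mathcal P^{(i)}\subseteq\mathcal P^{2^{i-1}}$ we induct on $i$. Since
$$\mathcal P^{(i+1)}=[\mathcal P^{(i)},\mathcal P^{(i)},\mathcal P,\ldots,\mathcal P]+\mathcal P^{(i)}\cdot\mathcal P^{(i)},$$
Steps 1 and 2 (the latter with $k=2$) place this in $\mathcal P^{2^{i-1}+2^{i-1}}=\mathcal P^{2^i}$.

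Finally, $(\mathcal P^{(i)})^{(j)}=\mathcal P^{(i+j-1)}$ follows by induction on $j$. The case $j=1$ holds by definition, and both sides obey the same recursion \eqref{eq5.1}, so the equality at level $j$ gives the equality at level $j+1$.
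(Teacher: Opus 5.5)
Your proof is correct and follows essentially the same route as the paper. The product inclusion uses the same induction on $j$ via the Leibniz rule. The bracket inclusion uses the same double induction on $k$ and on the last exponent: the fundamental identity handles bracket generators, and the Leibniz rule together with the first inclusion handles product generators. The last two relations are proved by the same direct arguments. Your version is more explicit than the paper's sketch in carrying each inductive hypothesis for all values of the other exponents.

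One small slip: when $i_k=1$, the bound $i_1+\cdots+i_{k-1}+1-k+2$ is exactly equal to the $(k-1)$-bound $i_1+\cdots+i_{k-1}-(k-1)+2$, not one larger. So the containment $\mathcal P^{s+1}\subseteq\mathcal P^s$ is not needed at that point.
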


\begin{proof}
We prove the first inclusion by induction on $j$. The case $j=1$ follows from \eqref{eq5.2}. Assuming the assertion for $j$, the Leibniz identity gives
\begin{align*}
\mathcal P^i\mathcal P^{j+1}&=\mathcal P^i\bigl([\mathcal P^j,\mathcal P,\ldots,\mathcal P]      +\mathcal P^j\mathcal P\bigr)\\
&\overset{\eqref{eq2.2}}{\subseteq}[\mathcal P^i\mathcal P^j,\mathcal P,\ldots,\mathcal P]+\mathcal P^j[\mathcal P^i,\mathcal P,\ldots,\mathcal P]+(\mathcal P^i\mathcal P^j)\mathcal P\\[2mm]
&\subseteq[\mathcal P^{i+j},\mathcal P,\ldots,\mathcal P]+\mathcal P^j\mathcal P^{i+1}+\mathcal P^{i+j}\mathcal P\subseteq\mathcal P^{i+j+1}.
\end{align*}

The second inclusion follows by induction on the number $k$ of nontrivial entries, combined with an induction on the last exponent. The case $k=1$ is precisely \eqref{eq5.2}. In the induction step, write
$$
\mathcal P^{i_{k+1}+1}
=[\mathcal P^{i_{k+1}},\mathcal P,\ldots,\mathcal P]
+\mathcal P^{i_{k+1}}\mathcal P.
$$
The Filippov identity applied to the first summand and the Leibniz identity applied to the second reduce the required inclusion to the induction hypotheses and the first inclusion already proved. This yields
\begin{equation}\label{eq5.4}
[\mathcal P^{i_1},\ldots,\mathcal P^{i_{k+1}},
  \mathcal P,\ldots,\mathcal P]
\subseteq
\mathcal P^{i_1+\cdots+i_{k+1}-(k+1)+2}.
\end{equation}

For the third inclusion, assume inductively that
$\mathcal P^{(i)}\subseteq\mathcal P^{2^{i-1}}$. The first two inclusions imply
\begin{align*}
\mathcal P^{(i+1)}
&=[\mathcal P^{(i)},\mathcal P^{(i)},
   \mathcal P,\ldots,\mathcal P]
  +\mathcal P^{(i)}\mathcal P^{(i)}\subseteq\mathcal P^{2^i}.
\end{align*}
The final equality follows directly by iterating the definition of the derived series.
\end{proof}

In particular, every nilpotent Poisson $n$-Lie algebra is solvable. We shall also use the standard fact that the sum of two solvable ideals is solvable and that the sum of two nilpotent ideals is nilpotent. For the latter assertion, a mixed-term induction gives
$$
(\mathcal I+\mathcal J)^r
\subseteq
\mathcal I^{\lfloor r/2\rfloor}
+\mathcal J^{\lfloor r/2\rfloor},
\qquad r\geq2,
$$
and the corresponding argument for the derived series proves the solvable case.

To compare nilpotency in $\mathcal P$ with nilpotency in its two underlying algebras, define
\begin{align*}
\mathcal I_A^1=\mathcal I_L^1=\mathcal I, \qquad \mathcal I_A^{r+1}=\mathcal I_A^r\cdot\mathcal I, \qquad \mathcal I_L^{r+1}=[\mathcal I_L^r,\mathcal I, \mathcal P,\ldots,\mathcal P].\end{align*}
Thus $\mathcal I_A^r$ is the $r$th associative power of $\mathcal I$, whereas $\mathcal I_L^r$ is the $r$th term of its lower central series as an ideal of $\mathcal P_L$.

\begin{lem}\label{lem5.2}
For every $k\geq1$,
\begin{equation}\label{eq5.5}
\mathcal I^k
\subseteq
\sum_{t=0}^{k}
\ \sum_{\substack{r_0\geq0,\ r_1,\ldots,r_t\geq1\\
                   r_0+r_1+\cdots+r_t=k}}
\mathcal I_A^{r_0}
\cdot\prod_{j=1}^{t}\mathcal I_L^{r_j}.
\end{equation}
When $r_0=0$, the factor $\mathcal I_A^{r_0}$ is omitted; when $t=0$, the product over $j$ is omitted. Thus the $t=0$ summand is $\mathcal I_A^k$.
\end{lem}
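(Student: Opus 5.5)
We argue by induction on $k$. Write $S_k$ for the right-hand side of \eqref{eq5.5}, so that the claim is $\mathcal I^k\subseteq S_k$. For $k=1$ we have $\mathcal I^1=\mathcal I$. This equals the summand with $t=0$, $r_0=1$, namely $\mathcal I_A^1$; it is also the summand with $t=1$, $r_0=0$, $r_1=1$. So the base case holds.

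Before the inductive step we record three facts.

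(a) Each $\mathcal I_A^r$ and each $\mathcal I_L^r$ is an ideal of $\mathcal P_A$. For $\mathcal I_L^r$ this follows by induction from the Leibniz rule: $z[u,v,p_3,\ldots,p_n]=[zu,v,\ldots]-u[z,v,\ldots]$, and the second term lies in $\mathcal I\cdot\mathcal I_L^{r}$-type pieces. More precisely, one shows $\mathcal P\cdot\mathcal I_L^{r}\subseteq\mathcal I_L^{r}$ together with the chain $\mathcal I_L^{r+1}\subseteq\mathcal I_L^r$.

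(b) $\mathcal I_A^a\cdot\mathcal I_A^b=\mathcal I_A^{a+b}$.

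(c) Consequently, multiplying any summand of $S_k$ by $\mathcal I=\mathcal I_A^1$ raises $r_0$ by one. Hence $S_k\cdot\mathcal I\subseteq S_{k+1}$, which handles the associative part $\mathcal I^k\cdot\mathcal I$ of $\mathcal I^{k+1}$.

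The main step is the bracket part $[\mathcal I^k,\mathcal I,\mathcal P,\ldots,\mathcal P]\subseteq[S_k,\mathcal I,\mathcal P,\ldots,\mathcal P]$. Take a generator $u=a\,b_1\cdots b_t$ with $a\in\mathcal I_A^{r_0}$ and $b_j\in\mathcal I_L^{r_j}$. Here $a$ is itself a product of $r_0$ elements of $\mathcal I$. Apply the Leibniz rule \eqref{eq2.2} in the first argument repeatedly to distribute the bracket over all the factors of $u$: the bracket lands either on one factor $c\in\mathcal I$ of $a$, or on one factor $b_j$.

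If it lands on $b_j$, then $[b_j,\mathcal I,\mathcal P,\ldots]\subseteq\mathcal I_L^{r_j+1}$. The term becomes $\mathcal I_A^{r_0}\prod_{i\ne j}\mathcal I_L^{r_i}\cdot\mathcal I_L^{r_j+1}$, which has total degree $k+1$ and lies in $S_{k+1}$.

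If it lands on a factor $c$ of $a$, then $[c,\mathcal I,\mathcal P,\ldots]\subseteq\mathcal I_L^2$. This term has $r_0-1$ associative factors and a new $\mathcal I_L$-factor with exponent $2$, so its total degree is $(r_0-1)+2+\sum r_j=k+1$. Again it lies in $S_{k+1}$; commutativity lets us reorder the factors.

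The hardest part is only the bookkeeping. Degrees must add correctly, which the computations above confirm. The empty cases also need care: when $r_0=0$ there is no associative factor to differentiate, and when $t=0$ everything comes from the Leibniz expansion of $\mathcal I_A^k$. I would also state explicitly that $\mathcal I_L^r$ is closed under multiplication by $\mathcal P$, as in (a), since that is what allows the product of a bracket with the remaining factors to remain inside the indicated product of subspaces. Combining the bracket part with (c) gives $\mathcal I^{k+1}\subseteq S_{k+1}$, which completes the induction.
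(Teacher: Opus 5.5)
Your argument is correct and is essentially the paper's. Both proceed by induction on $k$: the product $\mathcal I^k\cdot\mathcal I$ raises $r_0$ by one, and the bracket part is handled by distributing the Leibniz rule over the factors of a generator $c_1\cdots c_{r_0}b_1\cdots b_t$. This is exactly what the paper's two auxiliary inclusions encode: a bracket landing on an associative factor trades it for an $\mathcal I_L^2$, and a bracket landing on $b_j$ raises $r_j$.

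You should, however, delete fact (a) for $\mathcal I_L^r$. It is never used, because a product of subspaces is by definition the span of products of their elements. It is also false in general. Take Example~\ref{ex:determinantal-examples}(i) with $\mathcal A=\mathbb F[x_1,\ldots,x_{n+1}]$, $d_i=\partial/\partial x_i$, $m=1$, $a_{i,1}=x_i$, and $\mathcal I=\mathcal P$. Then $x_1=\pm[x_2,\ldots,x_{n+1}]\in\mathcal I_L^2$. On the other hand, every homogeneous quadratic element of $[\mathcal P,\ldots,\mathcal P]$ is a combination of terms $x_a\partial_b g-x_b\partial_a g$ with $g$ quadratic, and is therefore killed by $\sum_i\partial^2/\partial x_i^2$. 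Since $x_1\cdot x_1$ is not killed by this operator, $x_1\cdot x_1\notin\mathcal I_L^2$.
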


\begin{proof}
Repeated use of the Leibniz identity gives the following two inclusions:
\begin{equation}\label{eq5.6}
\left\{
\begin{array}{lllll}
\relax[\mathcal I_A^r,\mathcal I,\mathcal P,\ldots,\mathcal P]
&\subseteq
\mathcal I_A^{r-1}\mathcal I_L^2,
\qquad r\geq1,\\[3mm]
\left[
\prod_{j=1}^{t}\mathcal I_L^{r_j},
\mathcal I,\mathcal P,\ldots,\mathcal P
\right]
&\subseteq
\sum_{j=1}^{t}
\left(
\prod_{\substack{\ell=1\\ \ell\neq j}}^{t}
\mathcal I_L^{r_\ell}
\right)
\mathcal I_L^{r_j+1}.
\end{array}
\right.
\end{equation}
In the first line, the factor $\mathcal I_A^0$ is again omitted.

We now argue by induction on $k$. The assertion is immediate for $k=1$. Suppose it holds for $k$. Since
$$
\mathcal I^{k+1}
=[\mathcal I^k,\mathcal I,\mathcal P,\ldots,\mathcal P]
+\mathcal I^k\mathcal I,
$$
the second summand increases $r_0$ by one. By \eqref{eq5.6}, the first summand either replaces $r_0$ by $r_0-1$ and introduces a factor $\mathcal I_L^2$, or replaces one $r_j$ by $r_j+1$. In every case, the total weight increases from $k$ to $k+1$, which proves \eqref{eq5.5}.
\end{proof}

Applied to $\mathcal I=\mathcal P$, the preceding estimate yields the following characterization of nilpotency.

\begin{pro}\label{prop5.3}
A Poisson $n$-Lie algebra $\mathcal P$ is nilpotent if and only if both $\mathcal P_A$ and $\mathcal P_L$ are nilpotent.
\end{pro}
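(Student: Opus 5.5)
The plan is to prove the two implications separately, and to rely on Lemma~\ref{lem5.2} with $\mathcal I=\mathcal P$ for the nontrivial one.

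\emph{Necessity.} First observe from \eqref{eq5.2} that $\mathcal P_A^r\subseteq\mathcal P^r$ and $\mathcal P_L^r\subseteq\mathcal P^r$ for every $r\geq1$, where $\mathcal P_A^r$ and $\mathcal P_L^r$ denote the series $\mathcal I_A^r$ and $\mathcal I_L^r$ with $\mathcal I=\mathcal P$. Both inclusions follow by a one-line induction: each recursive step of the associative (respectively $n$-Lie) series is one of the two summands defining $\mathcal P^{r+1}$. Hence $\mathcal P^s=0$ forces $\mathcal P_A^s=0$ and $\mathcal P_L^s=0$.

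\emph{Sufficiency.} Suppose $\mathcal P_A^a=0$ and $\mathcal P_L^b=0$ with $a,b\geq1$. We may assume $a,b\geq2$; otherwise $\mathcal P=0$ and there is nothing to prove.

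1. Apply \eqref{eq5.5} with $\mathcal I=\mathcal P$ and $k$ large. Each summand is $\mathcal P_A^{r_0}\prod_{j=1}^t\mathcal P_L^{r_j}$ with $r_0+\sum r_j=k$.

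2. The factor $\prod_{j=1}^t\mathcal P_L^{r_j}$ is an associative product of $t$ elements of $\mathcal P$. Together with the $r_0$ further factors from $\mathcal P_A^{r_0}$, each summand lies in $\mathcal P_A^{r_0+t}$ (reading $\mathcal P_A^0\cdot X$ as $X$). Therefore the summand vanishes whenever $r_0+t\geq a$.

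3. If instead $r_0+t\leq a-1$, then $t\leq a-1$ and $\sum_j r_j\geq k-(a-1)$. Some $r_j$ must then satisfy $r_j\geq (k-a+1)/(a-1)$. Choose $k\geq (a-1)b+a-1$, so that this $r_j\geq b$ and the factor $\mathcal P_L^{r_j}$ vanishes. (When $t=0$ we have $r_0=k\geq a$, which is already covered by the previous step.)

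4. Consequently every summand of \eqref{eq5.5} is zero, so $\mathcal P^k=0$ for $k=(a-1)(b+1)$, and $\mathcal P$ is nilpotent.

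The main point to verify is that each factor $\mathcal P_L^{r}$ in Lemma~\ref{lem5.2} really is a subspace of $\mathcal P$, so that the associative power count in step 2 is legitimate. It is, since $\mathcal I_L^r\subseteq\mathcal P$.

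A second check is that the products in \eqref{eq5.5} are spans of products of elements. This ensures that a vanishing factor kills the whole summand, and that the reduction to associative powers in step 2 is valid. Beyond these, the argument is a pigeonhole count on $(r_0,t,r_j)$.
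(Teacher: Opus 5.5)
Your proposal is correct and follows the paper's own argument. Necessity comes from the inclusions $\mathcal P_A^r,\mathcal P_L^r\subseteq\mathcal P^r$, and sufficiency comes from Lemma~\ref{lem5.2} with $\mathcal I=\mathcal P$, noting that a nonzero summand needs $r_0+t<a$ and $r_j<b$; your explicit choice $k=(a-1)(b+1)$ simply makes quantitative the paper's remark that these inequalities bound the total weight $r_0+r_1+\cdots+r_t$.
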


\begin{proof}
The forward implication is immediate. Conversely, suppose that
$\mathcal P_A^a=0$ and $\mathcal P_L^b=0$. In every nonzero summand on the right-hand side of \eqref{eq5.5}, one must have $r_0<a$, $r_j<b$, and $r_0+t<a$, because the displayed product is an associative product of $r_0+t$ factors from $\mathcal P$. These inequalities bound the total weight $r_0+r_1+\cdots+r_t$. Hence $\mathcal P^k=0$ for all sufficiently large $k$.
\end{proof}
We next obtain an Engel-type criterion.

\begin{thm}[Engel criterion]
Let $\mathcal P$ be a finite-dimensional Poisson $n$-Lie algebra over a field of characteristic zero. Then $\mathcal P$ is nilpotent if and only if $P_x$ and $Q_X$ are nilpotent for every $x\in\mathcal P$ and every $X\in\bigwedge^{n-1}\mathcal P$.
\end{thm}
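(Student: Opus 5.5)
The plan is to reduce the Engel criterion to Proposition~\ref{prop5.3}. By that proposition, $\mathcal P$ is nilpotent if and only if both $\mathcal P_A$ and $\mathcal P_L$ are nilpotent, so it suffices to establish two separate Engel-type statements. The first is for the commutative associative algebra $\mathcal P_A$, in terms of the operators $P_x$. The second is for the $n$-Lie algebra $\mathcal P_L$, in terms of the inner derivations $Q_X$.

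For the forward implication, assume $\mathcal P^s=0$. Directly from \eqref{eq5.2} we get $P_x(\mathcal P^k)\subseteq\mathcal P^{k+1}$ and $Q_X(\mathcal P^k)\subseteq[\mathcal P^k,\mathcal P,\ldots,\mathcal P]\subseteq\mathcal P^{k+1}$. Hence $P_x^{s}=0$ and $Q_X^{s}=0$, since each application of $P_x$ or $Q_X$ moves one step down the filtration $\mathcal P=\mathcal P^1\supseteq\mathcal P^2\supseteq\cdots$.

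For the converse, the $n$-Lie part is handled by the linear span $\mathfrak g=\operatorname{span}\{Q_X\}\subseteq\operatorname{End}(\mathcal P)$ of the inner derivations. The fundamental identity \eqref{eq2.1} gives
\[
[Q_X,Q_Y]=\sum_{i}Q_{y_1\wedge\cdots\wedge Q_X(y_i)\wedge\cdots\wedge y_{n-1}},
\]
so $\mathfrak g$ is a Lie subalgebra of $\mathfrak{gl}(\mathcal P)$.

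The main obstacle is that Engel's theorem requires every element of $\mathfrak g$ to be nilpotent, while the hypothesis only covers the generators $Q_X$. I would get around this with the Lie-algebra version of Jacobson's theorem: a Lie subalgebra of $\mathfrak{gl}(V)$ spanned by a Lie-closed set of nilpotent operators is nilpotent. This applies because the set $\{Q_X\}$ is closed under commutators up to linear combinations of elements of the same form. Alternatively, I would run the usual Engel induction directly on the weakly closed set $\{Q_X\}$.

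Either route shows that $\mathfrak g$ acts nilpotently, and so yields a flag of subspaces $0=V_0\subset V_1\subset\cdots\subset V_d=\mathcal P$ with $Q_X(V_i)\subseteq V_{i-1}$ for all $X$. Then $\mathcal P_L^{k}=[\mathcal P_L^{k-1},\mathcal P,\ldots,\mathcal P]$ is spanned by values $Q_X(\mathcal P_L^{k-1})$, so induction gives $\mathcal P_L^{k}\subseteq V_{d-k+1}$, and therefore $\mathcal P_L^{d+1}=0$.

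The associative part is easier. The operators $P_x$ commute pairwise, by commutativity and associativity of the product, and each is nilpotent by hypothesis. Their span is therefore a commutative space of nilpotent operators, indeed a commuting family of nilpotent operators, and products of commuting nilpotent operators remain nilpotent. Finite dimension then produces a common flag on which all $P_x$ act strictly triangularly. As in the $n$-Lie case, this gives $\mathcal P_A^{d+1}=\mathcal P\cdot\mathcal P\cdots\mathcal P=0$. Combining this with the nilpotency of $\mathcal P_L$, Proposition~\ref{prop5.3} finishes the proof. Characteristic zero is used only to invoke the classical Engel/Jacobson theorems safely.
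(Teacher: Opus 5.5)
Your reduction to Proposition~\ref{prop5.3}, your filtration argument for the forward direction, and your treatment of $\mathcal P_A$ (a common strictly triangular flag for the commuting nilpotent operators $P_x$) are all fine and match the paper. The gap is in the $n$-Lie step.

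Jacobson's theorem on weakly closed sets requires the set $W$ itself to satisfy $[a,b]\in W$ for $a,b\in W$ (more generally, $ab+\gamma(a,b)ba\in W$). It is not enough that $[a,b]$ lie in the linear span of $W$. That weaker condition only says $\operatorname{span}W$ is a Lie algebra, and a Lie algebra spanned by nilpotent operators need not act nilpotently. For example, $\mathfrak{sl}_2(\mathbb F)$ acting on $\mathbb F^2$ is spanned by the nilpotent matrices $e$, $f$ and $h+e-f=\begin{pmatrix}1&1\\-1&-1\end{pmatrix}$. Your own formula writes $[Q_X,Q_Y]$, for decomposable $X,Y$, as a sum of $n-1$ decomposable inner derivations. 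Such a sum need not equal $Q_Z$ for any decomposable $Z$. So the set of decomposable inner derivations is not weakly closed, and both of your routes (Jacobson's theorem, or an Engel induction on a ``weakly closed'' set) fail at exactly the step you identified as the main obstacle. What is missing is precisely the Engel theorem for $n$-Lie algebras, which the paper does not reprove but cites from \cite{Kasymov}.

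There are two ways to close the gap. The first is to invoke that theorem, as the paper does. The second is to read the hypothesis literally: if $Q_X$ must be nilpotent for every $X\in\bigwedge^{n-1}\mathcal P$, with $X\mapsto Q_X$ extended linearly, then $\{Q_X\}$ is all of $\mathfrak g$. Every element of $\mathfrak g$ is then nilpotent, and the classical Engel theorem gives your flag directly, so your obstacle never arises. If only decomposable $X$ are allowed, the conclusion is still true. However, it needs more of the $n$-Lie structure than the commutator formula alone, and that additional input is what \cite{Kasymov} supplies.
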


\begin{proof}
If $\mathcal P$ is nilpotent, then Proposition~\ref{prop5.3} shows that both $\mathcal P_A$ and $\mathcal P_L$ are nilpotent. It follows that all operators $P_x$ and $Q_X$ are nilpotent.

Conversely, suppose that all these operators are nilpotent. Since $\mathcal P_A$ is commutative and associative, the multiplication
operators $P_x$, $x\in\mathcal P$, commute. By assumption, they can be simultaneously represented by strictly
upper triangular matrices. Writing $d=\dim\mathcal P$, we obtain
\[
x_1\cdots x_{d+1}
=P_{x_1}\cdots P_{x_d}(x_{d+1})=0
\]
for all $x_1,\ldots,x_{d+1}\in\mathcal P$. Thus
$\mathcal P_A^{d+1}=0$, and hence $\mathcal P_A$ is nilpotent. By the Engel theorem for $n$-Lie algebras \cite{Kasymov}, the nilpotency of every $Q_X$ implies that $\mathcal P_L$ is nilpotent. Proposition~\ref{prop5.3} now yields the nilpotency of $\mathcal P$.
\end{proof}

For the Lie-type result, we use the following analogues of Lie's theorem for $n$-Lie algebras.

\begin{thm}\cite{Kasymov}\label{thm5.4}
Let $\mathcal L$ be a finite-dimensional solvable $n$-Lie algebra over an algebraically closed field of characteristic zero and $V$ be a finite-dimensional $\mathcal L$-module. Then the representation $\rho$ of $\mathcal L$ in the space $V$ is triangularizable, i.e.,
$V$ contains an $\mathcal L$-invariant flag of subspaces.
\end{thm}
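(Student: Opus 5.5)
The plan is to reduce the statement to the classical Lie theorem for the Lie algebra of operators that $\mathcal L$ induces on $V$. For $X=x_1\wedge\cdots\wedge x_{n-1}\in\bigwedge^{n-1}\mathcal L$, write $\rho(X)\in\operatorname{End}(V)$ for the action given by the module structure. The module axioms (the representation-theoretic form of \eqref{eq2.1}) give
\[
[\rho(X),\rho(Y)]=\sum_{i=1}^{n-1}\rho\bigl(y_1\wedge\cdots\wedge Q_X(y_i)\wedge\cdots\wedge y_{n-1}\bigr),
\]
where $Q_X(y)=[x_1,\ldots,x_{n-1},y]$ as in the notation above. So $\mathfrak g:=\operatorname{span}\{\rho(X)\mid X\in\bigwedge^{n-1}\mathcal L\}$ is a Lie subalgebra of $\mathfrak{gl}(V)$, and it is finite-dimensional. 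A flag of subspaces is $\mathcal L$-invariant exactly when it is $\mathfrak g$-invariant. Over an algebraically closed field of characteristic zero, the classical Lie theorem then gives the flag as soon as $\mathfrak g$ is solvable. Everything therefore rests on proving that $\mathfrak g$ is solvable.

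For the solvability I would filter $\mathfrak g$ by the derived series of $\mathcal L$. Set
\[
H_k=\operatorname{span}\{\rho(x_1\wedge\cdots\wedge x_{n-1})\mid x_1\in\mathcal L^{(k)}\},
\]
so that at least one entry lies in $\mathcal L^{(k)}$. I would carry out two steps.
\begin{enumerate}
\item[(a)] Show that each $H_k$ is an ideal of $\mathfrak g$. The displayed bracket formula, together with skew-symmetry, writes $[\rho(X),\rho(Y)]$ with $Y\in H_k$ as a combination of $\rho$'s each of which keeps an entry in the ideal $\mathcal L^{(k)}$.
\item[(b)] Show that $[H_k,H_k]\subseteq H_{k+1}$, possibly only modulo an intermediate term. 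In the bracket formula, the entry $Q_X(y_i)$ with $y_i\in\mathcal L^{(k)}$ and $x_1\in\mathcal L^{(k)}$ lies in $[\mathcal L^{(k)},\mathcal L^{(k)},\mathcal L,\ldots,\mathcal L]$. The other terms, where the differentiated entry is not the one from $\mathcal L^{(k)}$, should be rewritten with the representation identity expressing $\rho$ of a bracket in terms of commutators. The aim is to move a second $\mathcal L^{(k)}$-entry into the bracket.
\end{enumerate}
Since $\mathcal L^{(s)}=0$ for some $s$, we get $H_s=0$, and the chain $\mathfrak g=H_1\supseteq H_2\supseteq\cdots\supseteq H_s=0$ has abelian successive quotients. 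Hence $\mathfrak g$ is solvable.

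The main obstacle is step (b), and it depends on which derived series Kasymov uses. With the ``two slots'' series \eqref{eq5.1} of this paper the inclusion is essentially immediate. With Filippov's series $\mathcal L^{(k+1)}=[\mathcal L^{(k)},\ldots,\mathcal L^{(k)}]$, a commutator of two elements of $H_k$ need not have all its entries in $\mathcal L^{(k)}$. In that case I would first refine the filtration to $H_{k,j}$, meaning at least $j$ entries lie in $\mathcal L^{(k)}$, and check that commutators raise $j$ until $j=n-1$ forces an entry into $\mathcal L^{(k+1)}$.

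If this bookkeeping fails, the fallback is to mimic the classical proof directly. By induction on $\dim\mathcal L$, choose a codimension-one ideal containing $[\mathcal L,\ldots,\mathcal L]\neq\mathcal L$ and take a common eigenvector for it. Then prove the invariance lemma, that weight spaces are stable under $\rho(X)$, using the trace argument in characteristic zero. Finally extend the eigenvector to all of $\mathcal L$ and pass to $V/\langle v\rangle$ to build the flag.
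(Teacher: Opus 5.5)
\textbf{Where the proposal is sound.} Your reduction is the right one. $\mathfrak g=\operatorname{span}\rho(\bigwedge^{n-1}\mathcal L)$ is a Lie subalgebra of $\mathfrak{gl}(V)$, $\mathcal L$-invariant and $\mathfrak g$-invariant flags coincide, and everything comes down to the solvability of $\mathfrak g$. This is the standard route, and the paper uses the same reduction for inner derivations in the proof of Theorem~\ref{thm5.13}. Step (a) is also fine.

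\textbf{The gap: step (b) fails as stated.} This step carries all the content. For the two-slot series \eqref{eq5.1}, the inclusion $[H_k,H_k]\subseteq H_{k+1}$ is not immediate, and in fact it is false. Take $[\rho(X),\rho(Y)]=\sum_i\rho(y_1,\ldots,Q_X(y_i),\ldots,y_{n-1})$ with $x_1,y_1\in\mathcal L^{(k)}$. Only the term $i=1$ is forced into $H_{k+1}$. The terms with $i\geq2$ have two entries in $\mathcal L^{(k)}$ and none in $\mathcal L^{(k+1)}$.

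Concretely, let $\mathcal L=\langle e_1,e_2,u_1,u_2\rangle$ be the $3$-Lie algebra with $[e_1,e_2,u_1]=u_1$ and $[e_1,e_2,u_2]=-u_2$. Then $\mathcal L^{(2)}=\langle u_1,u_2\rangle$ and $\mathcal L^{(3)}=0$ for either series, so $H_3=0$. Use matrix units $E_{ij}\in M_4(\mathbb F)$ and set $\rho(e_1\wedge e_2)=\operatorname{diag}(-1,-2,-2,-1)$, $\rho(e_1\wedge u_1)=E_{12}$, $\rho(e_2\wedge u_2)=E_{24}$, $\rho(e_2\wedge u_1)=E_{13}$, $\rho(e_1\wedge u_2)=-E_{34}$, $\rho(u_1\wedge u_2)=-E_{14}$. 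This assignment satisfies both module axioms. Yet $[\rho(e_1\wedge u_1),\rho(e_2\wedge u_2)]=E_{14}\neq0$ lies in $[H_2,H_2]$.

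Nor can the offending terms be ``rewritten in terms of commutators''. The second module axiom expresses $\rho$ of a bracket through products $\rho(\cdot)\rho(\cdot)$, and these leave $\mathfrak g$.

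\textbf{How to repair it.} The refinement you reserved for Filippov's series is exactly what repairs the two-slot case. Put $H_{k,j}=\operatorname{span}\{\rho(x_1\wedge\cdots\wedge x_{n-1})\mid x_1,\ldots,x_j\in\mathcal L^{(k)}\}+H_{k+1}$; these are ideals of $\mathfrak g$. Suppose $x_1,y_1,\ldots,y_j\in\mathcal L^{(k)}$. The terms with $i\leq j$ have $Q_X(y_i)\in[\mathcal L^{(k)},\mathcal L^{(k)},\mathcal L,\ldots,\mathcal L]=\mathcal L^{(k+1)}$. The terms with $i>j$ acquire a new entry $Q_X(y_i)\in\mathcal L^{(k)}$. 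Hence $[H_{k,j},H_{k,j}]\subseteq H_{k,j+1}$ for $j<n-1$ and $[H_{k,n-1},H_{k,n-1}]\subseteq H_{k+1}$, so $\mathfrak g$ is solvable.

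For Filippov's series, by contrast, your bookkeeping breaks exactly at the terms where $y_i$ already lies in $\mathcal L^{(k)}$. There $Q_X(y_i)$ is only known to lie in $\mathcal L^{(k)}$ unless all $n$ entries come from $\mathcal L^{(k)}$, so $j$ does not increase. For $n=3$ this already happens at $i=1$. To cover that reading you need an extra ingredient: a comparison showing that Filippov solvability forces the two-slot series to terminate. I believe this is Kasymov's equivalence of the notions of $k$-solvability.

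\textbf{The fallback does not avoid this.} For $n\geq3$, passing from a codimension-one ideal $K$ to $\mathcal L=K\oplus\mathbb Fz$ adjoins the whole noncommuting family $\rho(z\wedge\bigwedge^{n-2}K)$, not a single operator. Extending the common eigenvector then again requires the solvability of the Lie algebra these operators generate.
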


For a solvable Poisson $n$-Lie algebra, we combine this triangularization result with the nilpotency of the associative algebra to obtain a common eigenvector for the multiplication operators and the inner derivations.

\begin{thm}[Lie criterion]\label{thm5.5}
Let $\mathcal P$ be a finite-dimensional solvable Poisson $n$-Lie algebra over an algebraically closed field of characteristic zero. Then the family
$$\{P_x\mid x\in\mathcal P\}\cup\{Q_X\mid X\in\textstyle\bigwedge^{n-1}\mathcal P\}$$
has a common eigenvector.
\end{thm}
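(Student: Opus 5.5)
The approach is to find the common eigenvector inside the annihilator of the associative product, where the multiplication operators act trivially, and then apply Theorem~\ref{thm5.4} there.

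First, recall from the remark after Definition~\ref{defn5.3} that $\mathcal P_A$ is nilpotent and $\mathcal P_L$ is solvable. If $\mathcal P=0$ there is nothing to prove, so assume $\mathcal P\neq0$. Nilpotency of $\mathcal P_A$ gives $s\ge1$ with $\mathcal P_A^{s}\neq0$ and $\mathcal P_A^{s+1}=0$. Set
$$V=\operatorname{Ann}(\mathcal P_A)=\{v\in\mathcal P\mid x\cdot v=0\ \text{for all }x\in\mathcal P\}.$$
This space contains $\mathcal P_A^{s}$, so $V\neq0$. Every $P_x$ vanishes on $V$. Hence every nonzero vector of $V$ is an eigenvector of all $P_x$, with eigenvalue $0$.

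Next, show that $V$ is invariant under every $Q_X$. Take $v\in V$, $x\in\mathcal P$ and $X=x_1\wedge\cdots\wedge x_{n-1}$. The Leibniz rule \eqref{eq2.2}, combined with skew-symmetry (the last argument can be moved to the front), gives
$$0=Q_X(x\cdot v)=x\cdot Q_X(v)+v\cdot Q_X(x).$$
Since $Q_X(x)\in\mathcal P$ and $v\in V$, the term $v\cdot Q_X(x)$ is $0$. Therefore $x\cdot Q_X(v)=0$ for all $x$, so $Q_X(v)\in V$. Thus $V$ is a submodule of the adjoint module of $\mathcal P_L$, with $\rho(x_1,\ldots,x_{n-1})=Q_{x_1\wedge\cdots\wedge x_{n-1}}|_V$. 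More precisely, the fundamental identity \eqref{eq2.1} is exactly the statement that the adjoint action is a representation, so restricting it to an invariant subspace again gives an $\mathcal P_L$-module.

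Then apply Theorem~\ref{thm5.4} to the solvable $n$-Lie algebra $\mathcal P_L$ and the finite-dimensional module $V$. This produces an invariant flag $0=V_0\subset V_1\subset\cdots\subset V_r=V$ with $\dim V_1=1$. Any nonzero $v\in V_1$ is then an eigenvector of every $Q_{x_1\wedge\cdots\wedge x_{n-1}}$. A general $X\in\bigwedge^{n-1}\mathcal P$ is a linear combination of decomposable wedges, and $Q_X$ is linear in $X$, so $v$ is an eigenvector of every $Q_X$. Together with the first step, $v$ is a common eigenvector of the whole family.

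The main point to check is the step where Theorem~\ref{thm5.4} is invoked. One must confirm that the flag it provides is a complete flag, or at least begins with a one-dimensional invariant line, as in Kasymov's formulation of triangularizability. If the cited result only gives some invariant flag, the fallback is to take a minimal nonzero invariant subspace of $V$. The version of Lie's theorem in \cite{Kasymov}, that irreducible modules of solvable $n$-Lie algebras over an algebraically closed field of characteristic zero are one-dimensional, then forces that subspace to be a line. Everything else is a direct consequence of the Leibniz rule and the nilpotency of $\mathcal P_A$.
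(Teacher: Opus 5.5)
Your proof is correct and follows essentially the paper's route. You take $\operatorname{Ann}(\mathcal P_A)$, which is nonzero by nilpotency of $\mathcal P_A$ and invariant under every $Q_X$ by the Leibniz rule, and use Kasymov's Lie theorem to find a line in it that is stable under all inner derivations. The only cosmetic difference is that the paper triangularizes the whole adjoint module and intersects the resulting complete flag of ideals with the annihilator at the first index where the intersection is nonzero, whereas you apply Theorem~\ref{thm5.4} directly to the submodule.
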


\begin{proof}
Set $m=\dim\mathcal P$. Since $\mathcal P$ is solvable, $\mathcal P_A$ is nilpotent and $\mathcal P_L$ is solvable. Applying Theorem~\ref{thm5.4} to the regular representation of $\mathcal P_L$, we obtain a complete flag of ideals
\begin{equation}\label{eq5.8}
0=\mathcal L_0\subset\mathcal L_1\subset\cdots\subset\mathcal L_m=\mathcal P_L,\qquad \dim\mathcal L_i=i.
\end{equation}

Let
$$\mathcal C=\operatorname{Ann}(\mathcal P_A)=\{c\in\mathcal P\mid c\mathcal P=0\}.$$
The nilpotency of $\mathcal P_A$ implies that $\mathcal C\neq0$. Moreover, the Leibniz identity shows that $\mathcal C$ is an ideal of $\mathcal P_L$, and hence an ideal of $\mathcal P$. Choose the least $i_0$ for which $\mathcal L_{i_0}\cap\mathcal C\neq0$. Since $\dim(\mathcal L_{i_0}/\mathcal L_{i_0-1})=1$, the ideal $\mathcal V=\mathcal L_{i_0}\cap\mathcal C$ is one-dimensional. Therefore, if $0\neq v\in\mathcal V$, then $P_x(v)=0$ and $Q_X(v)=\lambda_Xv$ for all $x\in\mathcal P$ and $X\in\bigwedge^{n-1}\mathcal P$.
\end{proof}

\begin{cor}
Let $\mathcal P$ be as in Theorem~\ref{thm5.5}. Then $\mathcal P$ admits a complete flag of ideals
$$0=\mathcal P_0\subset\mathcal P_1\subset\cdots\subset\mathcal P_m=\mathcal P,\qquad \dim\mathcal P_i=i.$$
\end{cor}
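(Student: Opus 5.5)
We argue by induction on $m=\dim\mathcal P$, using Theorem~\ref{thm5.5} to produce a one-dimensional ideal and then passing to the quotient. The case $m=0$ is trivial, and if $m=1$ the flag $0\subset\mathcal P$ works.

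For the inductive step, apply Theorem~\ref{thm5.5} to $\mathcal P$ to obtain a common eigenvector $v$ of all $P_x$ and all $Q_X$. Put $\mathcal P_1=\mathbb F v$. It is an ideal of $\mathcal P_A$, since $x\cdot v=P_x(v)\in\mathbb F v$. It is also an ideal of $\mathcal P_L$: by skew-symmetry, any bracket with one entry equal to $v$ equals $\pm Q_X(v)$ for a suitable $X\in\bigwedge^{n-1}\mathcal P$, and this lies in $\mathbb F v$. Hence $\mathcal P_1$ is an ideal of the Poisson $n$-Lie algebra $\mathcal P$.

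Now consider the quotient $\overline{\mathcal P}=\mathcal P/\mathcal P_1$. It is a Poisson $n$-Lie algebra of dimension $m-1$, and it is solvable because solvability passes to quotients, as remarked after Definition~\ref{defn5.3}. It is defined over the same algebraically closed field of characteristic zero. By the induction hypothesis, $\overline{\mathcal P}$ has a complete flag of ideals
$$0=\overline{\mathcal P}_0\subset\cdots\subset\overline{\mathcal P}_{m-1}=\overline{\mathcal P},\qquad \dim\overline{\mathcal P}_j=j.$$
Let $\mathcal P_{j+1}$ be the full preimage of $\overline{\mathcal P}_j$ under the projection $\mathcal P\to\overline{\mathcal P}$, so that $\dim\mathcal P_{j+1}=j+1$.

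It remains to check that each $\mathcal P_{j+1}$ is an ideal of $\mathcal P$. If $y\in\mathcal P_{j+1}$ and $x\in\mathcal P$, then the image of $x\cdot y$ lies in $\overline{\mathcal P}_j$, because $\overline{\mathcal P}_j$ is an associative ideal; hence $x\cdot y\in\mathcal P_{j+1}$. The same argument applies to brackets $[x_1,\ldots,x_{n-1},y]$, using that $\overline{\mathcal P}_j$ is an ideal of $\overline{\mathcal P}_L$. This gives the required flag $0=\mathcal P_0\subset\mathcal P_1\subset\cdots\subset\mathcal P_m=\mathcal P$.

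The only point needing care is that the eigenvector from Theorem~\ref{thm5.5} spans an ideal for both structures simultaneously, and this is exactly what that theorem guarantees. No other step presents an obstacle.
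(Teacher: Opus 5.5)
Your proposal is correct and follows the same argument as the paper. The common eigenvector from Theorem~\ref{thm5.5} spans a one-dimensional ideal, the quotient by it is again solvable, and induction on $\dim\mathcal P$, pulling back the quotient's flag via full preimages, gives the result. You only spell out the routine checks (that the eigenline is an ideal for both structures and that preimages of ideals are ideals) which the paper leaves implicit.
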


\begin{proof}
The common eigenline supplied by Theorem~\ref{thm5.5} is a one-dimensional ideal of $\mathcal P$. The quotient by this ideal is again solvable. The result follows by induction on $\dim\mathcal P$.
\end{proof}

We now characterize solvability in terms of the square ideal
$$\mathcal P^2=[\mathcal P,\ldots,\mathcal P]+\mathcal P\cdot\mathcal P.$$

\begin{pro}\label{prop5.8}
Let $\mathcal P$ be a finite-dimensional Poisson $n$-Lie algebra over an algebraically closed field of characteristic zero. Then $\mathcal P$ is solvable if and only if $\mathcal P^2$ is a nilpotent ideal of $\mathcal P$.
\end{pro}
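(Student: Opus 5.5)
The proof splits into the two implications; the substantive one is that solvability forces $\mathcal P^2$ to be nilpotent.

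\emph{The easy direction.} Suppose $\mathcal P^2$ is a nilpotent ideal. By Lemma~\ref{lem5.1} it is solvable, so $(\mathcal P^2)^{(s)}=0$ for some $s$. Since $\mathcal P^{(2)}\subseteq\mathcal P^2$ and derived series are monotone, we get $\mathcal P^{(s+1)}=(\mathcal P^{(2)})^{(s)}\subseteq(\mathcal P^2)^{(s)}=0$. Hence $\mathcal P$ is solvable.

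\emph{The hard direction: triangular form.} Assume $\mathcal P$ is solvable and let $m=\dim\mathcal P$. The Corollary following Theorem~\ref{thm5.5} gives a complete flag of ideals $0=\mathcal P_0\subset\cdots\subset\mathcal P_m=\mathcal P$ with $\dim\mathcal P_i=i$. Choose a basis $e_1,\ldots,e_m$ adapted to this flag. In this basis every $P_x$ and every $Q_X$ is upper triangular. Each quotient $\mathcal P_i/\mathcal P_{i-1}$ is one-dimensional, so the diagonal entries define linear functionals
\[
x\mapsto\alpha_i(x),\qquad X\mapsto\lambda_i(X),
\]
given by the action on $\mathcal P_i/\mathcal P_{i-1}$.

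\emph{Step 1: the multiplication part of $\mathcal P^2$.} Since $\mathcal P_A$ is nilpotent, each $P_x$ is nilpotent, so $\alpha_i\equiv0$. Thus $\mathcal P\cdot\mathcal P_i\subseteq\mathcal P_{i-1}$ for all $i$.

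\emph{Step 2: the bracket part of $\mathcal P^2$.} I want to show that $\mathcal P^2$ acts strictly upper triangularly via the inner derivations. Concretely, $\lambda_i(X)=0$ whenever some entry of $X$ lies in $\mathcal P^2$. This requires two facts.
\begin{enumerate}
\item[(a)] $\lambda_i$ vanishes when an entry of $X$ is a product $yz$. Using the Leibniz rule,
\[
Q_{yz\wedge\cdots}=P_y\,Q_{z\wedge\cdots}+P_z\,Q_{y\wedge\cdots}+\text{(correction terms)}.
\]
The correction terms come from $[y\,\cdot,\ldots]$ acting on $z$ and similar expressions, and they should also contain a factor $P_y$ or $P_z$. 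Since every $P_y$ is strictly triangular while every $Q$ is upper triangular, the diagonal of each term vanishes. I would verify this carefully: on $\mathcal P_i/\mathcal P_{i-1}$ we have $[yz,x_2,\ldots,x_{n-1},e_i]$, and I expand it via the Leibniz rule in the first argument, $y[z,\ldots,e_i]+z[y,\ldots,e_i]$, and each summand lies in $\mathcal P\cdot\mathcal P_i\subseteq\mathcal P_{i-1}$.
\item[(b)] $\lambda_i$ vanishes when an entry of $X$ is a bracket. This is the $n$-Lie analogue of the statement that $\operatorname{tr}$ of a commutator vanishes on each one-dimensional quotient. I would use the fundamental identity: the operator $Q$ with entry $[y_1,\ldots,y_n]$ is expressed as a sum of commutators of operators $Q_Y$, $Q_Z$, together with terms where the bracket is moved. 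On the one-dimensional quotient the commutators vanish. The term arising from $\operatorname{ad}$ acting inside then needs the standard Kasymov-type fact that $\lambda_i([\mathcal P,\ldots,\mathcal P],\ldots)=0$ for solvable $n$-Lie algebras (the functional vanishes on the derived algebra). I expect to cite \cite{Kasymov} for this.
\end{enumerate}

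\emph{Step 3: conclude nilpotency.} By Steps 1 and 2, every $P_x$ with $x\in\mathcal P^2$, and every $Q_X$ with an entry in $\mathcal P^2$, maps $\mathcal P_i$ into $\mathcal P_{i-1}$. Define the filtration $F_i=\mathcal P^2\cap\mathcal P_i$. Each step of the lower central series of the ideal $\mathcal I=\mathcal P^2$,
\[
\mathcal I^{k+1}=[\mathcal I^k,\mathcal I,\mathcal P,\ldots,\mathcal P]+\mathcal I^k\cdot\mathcal I,
\]
applies such an operator to $\mathcal I^k$. If $\mathcal I^k\subseteq\mathcal P_j$, then $\mathcal I^{k+1}\subseteq\mathcal P_{j-1}$. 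By induction $\mathcal I^{k}\subseteq\mathcal P_{m-k+1}$, so $\mathcal I^{m+1}=0$.

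The main obstacle is Step 2(b): the vanishing of the diagonal weights of inner derivations on brackets. The derivation and trace argument is less clean for $n$-ary brackets than for Lie algebras, and I will most likely need to rely on the corresponding $n$-Lie result.
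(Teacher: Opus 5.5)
Your proof is correct, but the hard direction takes a different route from the paper's. The paper does not triangularize at all. It writes $\mathcal P^2=[\mathcal P,\ldots,\mathcal P]+\mathcal B$ with $\mathcal B=\mathcal P\cdot\mathcal P$ and takes from Kasymov's Theorem~6 that $[\mathcal P,\ldots,\mathcal P]$ is a nilpotent ideal of $\mathcal P_L$. It then proves $\mathcal B_L^k\subseteq\mathcal P_A^{k+1}$ by a Leibniz induction, which is the counterpart of your Step~2(a). It adds the two pieces using the fact that sums of nilpotent $n$-Lie ideals are nilpotent. Finally, Lemma~\ref{lem5.2} upgrades ``nilpotent as an $n$-Lie ideal and associatively nilpotent'' to ``nilpotent as an ideal of $\mathcal P$''. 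You instead use the complete flag of Poisson ideals from the corollary to Theorem~\ref{thm5.5}. You show that every operator generating the lower central series of $\mathcal P^2$ is strictly triangular with respect to that flag. This bypasses Lemma~\ref{lem5.2} and the sum-of-nilpotent-ideals fact, and it gives the explicit bound $(\mathcal P^2)^{m+1}=0$. The price is that you also rely on the Lie criterion, and hence on Kasymov's Lie theorem, on top of the one external input both proofs share.

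That shared input is what actually carries your Step~2(b). You should invoke it directly, because the fundamental-identity heuristic does not work as sketched. On a one-dimensional quotient, the identity $[Q_X,Q_Y]=\sum_{i}Q_{(y_1,\ldots,Q_X(y_i),\ldots,y_{n-1})}$ only kills a sum over positions. The other form of the identity expresses $\lambda_j([y_1,\ldots,y_n],x_2,\ldots,x_{n-1})$ as an alternating sum of products of weights, not of commutators, so nothing vanishes for free. The clean argument runs as follows. Put $\mathcal D=[\mathcal P,\ldots,\mathcal P]$, which is nilpotent as an ideal of $\mathcal P_L$ by Kasymov. For $w\in\mathcal D$, the operator $Q_{(w,x_2,\ldots,x_{n-1})}$ maps $\mathcal P$ into $\mathcal D$. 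It also maps $\mathcal D_L^k$ into $[\mathcal D_L^k,\mathcal D,\mathcal P,\ldots,\mathcal P]=\mathcal D_L^{k+1}$ by skew-symmetry. Hence it is nilpotent, and so it has zero diagonal in your flag basis. With (b) justified this way, your Steps 1--3 go through as written.
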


\begin{proof}
Suppose first that $\mathcal P^2$ is nilpotent. Choose $k$ such that
$(\mathcal P^2)^{2^{k-1}}=0$. Lemma~\ref{lem5.1} gives
$$\mathcal P^{(k+1)}=(\mathcal P^{(2)})^{(k)}=(\mathcal P^2)^{(k)}\subseteq(\mathcal P^2)^{2^{k-1}}=0,$$
so $\mathcal P$ is solvable.

Conversely, assume that $\mathcal P$ is solvable. Then $\mathcal P_A$ is nilpotent and $\mathcal P_L$ is solvable. By \cite[Theorem~6]{Kasymov}, the $n$-Lie ideal $[\mathcal P,\ldots,\mathcal P]$ is nilpotent. Put $\mathcal B=\mathcal P\cdot\mathcal P$ and denote by $\mathcal B_L^k$ its lower central series as an ideal of $\mathcal P_L$. An induction based on the Leibniz identity yields
\begin{equation}\label{eq5.9}
\mathcal B_L^k\subseteq\mathcal P_A^{k+1}, \qquad k\geq1.
\end{equation}
Thus $\mathcal B$ is also a nilpotent ideal of $\mathcal P_L$. Since the sum of nilpotent ideals of an $n$-Lie algebra is nilpotent,
$$\mathcal P^2=[\mathcal P,\ldots,\mathcal P]+\mathcal B$$
is nilpotent as an ideal of $\mathcal P_L$. It is also nilpotent as an associative ideal because $\mathcal P_A$ is nilpotent. Lemma~\ref{lem5.2}, applied to $\mathcal I=\mathcal P^2$, now shows that $\mathcal P^2$ is nilpotent as an ideal of $\mathcal P$.
\end{proof}

\begin{cor}
Let $\mathcal P$ be a finite-dimensional Poisson $n$-Lie algebra over an algebraically closed field of characteristic zero. Then $\mathcal P$ is solvable if and only if $\mathcal P_L$ is solvable and $\mathcal P_A$ is nilpotent.
\end{cor}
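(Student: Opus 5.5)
The forward implication is the remark made right after Definition~\ref{defn5.3}, and I plan to check it directly from \eqref{eq5.1}. For every $k$ the definition gives the two inclusions
\[
\mathcal P^{(k)}\cdot\mathcal P^{(k)}\subseteq\mathcal P^{(k+1)},\qquad [\mathcal P^{(k)},\mathcal P^{(k)},\mathcal P,\ldots,\mathcal P]\subseteq\mathcal P^{(k+1)}.
\]
Write $\mathcal P_L^{(k)}$ for the derived series of $\mathcal P_L$ and $\mathcal P_A^{(k)}$ for that of $\mathcal P_A$ (squares). An induction on $k$ then shows that $\mathcal P_L^{(k)}\subseteq\mathcal P^{(k)}$ and $\mathcal P_A^{(k)}\subseteq\mathcal P^{(k)}$. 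So if $\mathcal P$ is solvable, then $\mathcal P_L$ is solvable and $\mathcal P_A$ is solvable. A solvable commutative associative algebra is nilpotent, because $\mathcal P_A^{(k)}$ contains the associative power $\mathcal P_A^{2^{k-1}}$. Hence $\mathcal P_A$ is nilpotent.

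For the converse I will reduce to Proposition~\ref{prop5.8}, so the target is to show that $\mathcal P^2$ is a nilpotent ideal of $\mathcal P$. I will reuse the second half of the proof of Proposition~\ref{prop5.8}, since that argument used solvability of $\mathcal P$ only through two facts: $\mathcal P_L$ is solvable and $\mathcal P_A$ is nilpotent. Concretely, solvability of $\mathcal P_L$ and \cite[Theorem~6]{Kasymov} give that $[\mathcal P,\ldots,\mathcal P]$ is nilpotent as an ideal of $\mathcal P_L$. Inclusion \eqref{eq5.9}, together with nilpotency of $\mathcal P_A$, gives that $\mathcal B=\mathcal P\cdot\mathcal P$ is nilpotent as an ideal of $\mathcal P_L$. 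The sum $\mathcal P^2$ of these two ideals is then nilpotent in $\mathcal P_L$, and it is associatively nilpotent because $\mathcal P_A$ is. Lemma~\ref{lem5.2}, applied with $\mathcal I=\mathcal P^2$, then yields that $\mathcal P^2$ is nilpotent in $\mathcal P$, and Proposition~\ref{prop5.8} concludes.

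The step needing care is the application of Lemma~\ref{lem5.2}. It must be applied to an ideal $\mathcal I\neq\mathcal P$, with $\mathcal I_L^r$ denoting the lower central series taken inside $\mathcal P_L$, which is exactly the notion of nilpotency coming from \cite{Kasymov} and \eqref{eq5.9}. The bounding argument is that of Proposition~\ref{prop5.3}. If $\mathcal I_A^a=0$ and $\mathcal I_L^b=0$, then a nonzero summand in \eqref{eq5.5} is an associative product of $r_0+t$ elements of $\mathcal I$. This forces $r_0+t<a$ and each $r_j<b$, so the total weight $k$ is bounded by roughly $ab$, and $\mathcal I^k=0$ beyond that. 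Alternatively, one can note that the statement is literally the content of the converse half of the proof of Proposition~\ref{prop5.8} and cite it directly.
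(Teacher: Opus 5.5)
Your proposal is correct and follows the paper's proof. For the converse, the paper likewise reruns the second half of the proof of Proposition~\ref{prop5.8}: Kasymov's theorem for $[\mathcal P,\ldots,\mathcal P]$, inclusion \eqref{eq5.9} for $\mathcal P\cdot\mathcal P$, and Lemma~\ref{lem5.2} give that $\mathcal P^2$ is nilpotent, and Proposition~\ref{prop5.8} then yields solvability. The only difference is that you check the forward implication explicitly through the derived series, whereas the paper takes it as the immediate remark after Definition~\ref{defn5.3}.
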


\begin{proof}
Only the converse requires proof. If $\mathcal P_L$ is solvable and $\mathcal P_A$ is nilpotent, the argument in the second half of the proof of Proposition~\ref{prop5.8} shows that $\mathcal P^2$ is nilpotent. Proposition~\ref{prop5.8} then implies that $\mathcal P$ is solvable.
\end{proof}

For a finite-dimensional Poisson $n$-Lie algebra $\mathcal P$, let $\operatorname{Nil}(\mathcal P)$ denote its nilradical, that is, its largest nilpotent ideal.

\begin{cor}\label{cor5.11}
Let $\mathcal P$ be a finite-dimensional solvable Poisson $n$-Lie algebra over an algebraically closed field of characteristic zero. Then $\operatorname{Nil}(\mathcal P)$ coincides with the nilradical of $\mathcal P_L$.
\end{cor}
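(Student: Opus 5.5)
We prove the two inclusions $\operatorname{Nil}(\mathcal P)\subseteq\operatorname{Nil}(\mathcal P_L)$ and $\operatorname{Nil}(\mathcal P_L)\subseteq\operatorname{Nil}(\mathcal P)$, where $\operatorname{Nil}(\mathcal P_L)$ denotes the largest nilpotent ideal of the $n$-Lie algebra $\mathcal P_L$.

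\emph{First inclusion.} Put $\mathcal N=\operatorname{Nil}(\mathcal P)$. It is an ideal of $\mathcal P_L$. From $\mathcal N^{k+1}=[\mathcal N^k,\mathcal N,\mathcal P,\ldots,\mathcal P]+\mathcal N^k\cdot\mathcal N$ an immediate induction gives $\mathcal N_L^k\subseteq\mathcal N^k$. Hence $\mathcal N$ is a nilpotent ideal of $\mathcal P_L$, and so $\mathcal N\subseteq\operatorname{Nil}(\mathcal P_L)$.

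\emph{Second inclusion.} Put $\mathcal M=\operatorname{Nil}(\mathcal P_L)$; we show that $\mathcal M$ is a nilpotent ideal of $\mathcal P$.

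First, $\mathcal M$ is stable under multiplication by $\mathcal P$. Since $\mathcal P$ is solvable, Proposition~\ref{prop5.8} says that $\mathcal P^2$ is a nilpotent ideal of $\mathcal P$. By the first inclusion, $\mathcal P^2\subseteq\operatorname{Nil}(\mathcal P_L)=\mathcal M$. In particular $\mathcal P\cdot\mathcal M\subseteq\mathcal P\cdot\mathcal P\subseteq\mathcal P^2\subseteq\mathcal M$. So $\mathcal M$ is an associative ideal, and hence an ideal of $\mathcal P$.

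Second, $\mathcal M$ is nilpotent in both underlying structures. It is nilpotent as an ideal of $\mathcal P_L$ by definition. Its associative powers satisfy $\mathcal M_A^r\subseteq\mathcal P_A^r$, and $\mathcal P_A$ is nilpotent because $\mathcal P$ is solvable. So $\mathcal M_A^a=0$ and $\mathcal M_L^b=0$ for some $a,b$.

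Third, nilpotency as a Poisson ideal. Apply Lemma~\ref{lem5.2} with $\mathcal I=\mathcal M$ and argue as in Proposition~\ref{prop5.3}, adapted to an ideal rather than the whole algebra. In a summand $\mathcal M_A^{r_0}\prod_{j=1}^t\mathcal M_L^{r_j}$, each $\mathcal M_L^{r_j}$ lies in $\mathcal M$. The summand is therefore an associative product of $r_0+t$ factors from $\mathcal M$, so it lies in $\mathcal M_A^{r_0+t}$ and vanishes when $r_0+t\geq a$. It also vanishes when some $r_j\geq b$. Hence every nonzero summand has total weight $r_0+r_1+\cdots+r_t$ less than $a+ab$, so $\mathcal M^k=0$ for large $k$. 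Thus $\mathcal M\subseteq\operatorname{Nil}(\mathcal P)$.

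The only point needing care is the first step of the second inclusion: showing that $\operatorname{Nil}(\mathcal P_L)$ is closed under multiplication by $\mathcal P$. It is handled by the chain $\mathcal P\cdot\mathcal P\subseteq\mathcal P^2\subseteq\operatorname{Nil}(\mathcal P)\subseteq\operatorname{Nil}(\mathcal P_L)$, which uses solvability through Proposition~\ref{prop5.8}.
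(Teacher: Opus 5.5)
Your proof is correct and follows the paper's argument essentially step for step. The paper dismisses the inclusion $\operatorname{Nil}(\mathcal P)\subseteq\operatorname{Nil}(\mathcal P_L)$ as clear, and you justify it via $\mathcal N_L^k\subseteq\mathcal N^k$. You then use the chain $\mathcal P\cdot\mathcal M\subseteq\mathcal P^2\subseteq\operatorname{Nil}(\mathcal P)\subseteq\mathcal M$ from Proposition~\ref{prop5.8} to make $\operatorname{Nil}(\mathcal P_L)$ an associative ideal, and finish with Lemma~\ref{lem5.2}, spelling out explicitly the weight bound that the paper leaves implicit.
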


\begin{proof}
Let $\mathcal N_L$ be the nilradical of $\mathcal P_L$. Clearly, $\operatorname{Nil}(\mathcal P)\subseteq\mathcal N_L$. By Proposition~\ref{prop5.8},
$$\mathcal N_L\cdot\mathcal P\subseteq\mathcal P^2\subseteq\operatorname{Nil}(\mathcal P)\subseteq\mathcal N_L.$$
Hence $\mathcal N_L$ is also an associative ideal. Since $\mathcal P_A$ is nilpotent, $\mathcal N_L$ is nilpotent as an associative algebra. It is nilpotent as an $n$-Lie ideal by definition, and Lemma~\ref{lem5.2} therefore shows that it is a nilpotent ideal of $\mathcal P$. Thus
$\mathcal N_L\subseteq\operatorname{Nil}(\mathcal P)$.
\end{proof}

\begin{cor}
Let $\mathcal P$ be as in Corollary~\ref{cor5.11}. Then $\operatorname{Nil}(\mathcal P)$ is invariant under every derivation of $\mathcal P$.
\end{cor}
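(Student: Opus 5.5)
The plan is to reduce the statement to the corresponding fact for $n$-Lie algebras. By Corollary~\ref{cor5.11}, $\operatorname{Nil}(\mathcal P)$ equals the nilradical $\mathcal N_L$ of $\mathcal P_L$. A derivation $D$ of the Poisson $n$-Lie algebra $\mathcal P$ is in particular a derivation of $\mathcal P_L$:
\[
D[x_1,\ldots,x_n]=\sum_{i=1}^n[x_1,\ldots,Dx_i,\ldots,x_n].
\]
It therefore suffices to show that the nilradical of a finite-dimensional $n$-Lie algebra over a field of characteristic zero is invariant under every derivation. This is the $n$-ary analogue of the classical Lie algebra statement. It may already be available in \cite{Kasymov}; if so, one simply cites it. Otherwise I would prove it directly as follows.

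\textbf{Direct argument.} Pass to the semidirect extension $\widehat{\mathcal L}=\mathcal P_L\oplus\mathbb F D$. The extended product $[-,\ldots,-]$ does not naturally make this an $n$-Lie algebra. So I would instead argue with the exponential, which is the standard characteristic-zero route. Since $\mathcal P$ is finite-dimensional, $\exp(tD)$ makes sense after passing to the algebraic closure. More robustly, one can work with the Zariski closure: the automorphism group $\operatorname{Aut}(\mathcal P)$ is an algebraic group whose Lie algebra is $\operatorname{Der}(\mathcal P)$.

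First, every automorphism $\varphi$ of $\mathcal P$ maps nilpotent ideals to nilpotent ideals, because $\varphi(\mathcal I^k)=\varphi(\mathcal I)^k$. Hence $\varphi(\operatorname{Nil}(\mathcal P))=\operatorname{Nil}(\mathcal P)$. Thus the subspace $\operatorname{Nil}(\mathcal P)$ is stabilized by the algebraic group $\operatorname{Aut}(\mathcal P)$. The stabilizer of a subspace is an algebraic subgroup of $GL(\mathcal P)$. Its Lie algebra is contained in $\{T\in\mathfrak{gl}(\mathcal P): T(\operatorname{Nil}(\mathcal P))\subseteq\operatorname{Nil}(\mathcal P)\}$. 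Since $\operatorname{Lie}(\operatorname{Aut}(\mathcal P))=\operatorname{Der}(\mathcal P)$ and it is contained in this stabilizer's Lie algebra, every derivation preserves $\operatorname{Nil}(\mathcal P)$.

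\textbf{Elementary alternative.} If one prefers to avoid algebraic groups, there is a more hands-on route via Corollary~\ref{cor5.11} and Proposition~\ref{prop5.8}. Since $\mathcal P^2\subseteq\operatorname{Nil}(\mathcal P)$ and $D(\mathcal P^2)\subseteq\mathcal P^2$ by the derivation rules, it suffices to consider $D$ modulo $\mathcal P^2$. The plan is to show that $\mathcal N+D(\mathcal N)$ is a nilpotent ideal, where $\mathcal N=\operatorname{Nil}(\mathcal P)$. It is an ideal because $[D\mathcal N,\mathcal P,\ldots,\mathcal P]\subseteq D[\mathcal N,\mathcal P,\ldots]+[\mathcal N,\ldots,D\mathcal P,\ldots]\subseteq\mathcal P^2$, and similarly $D(\mathcal N)\cdot\mathcal P\subseteq\mathcal P^2$. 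Nilpotency would come from the estimate
\[
(\mathcal N+D\mathcal N)^{k}\subseteq \mathcal N^{k-1}+D(\mathcal N^{k-1})\quad\text{or similar},
\]
using the Leibniz rule for $D$ on each product and bracket. This follows the classical proof for Lie algebras, where $\mathcal N^k$ terms with at most one $D$ applied are controlled.

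The main obstacle is making this last estimate precise. Alternatively, one must justify the identification $\operatorname{Lie}(\operatorname{Aut}\mathcal P)=\operatorname{Der}\mathcal P$ in characteristic zero, which I would take as standard. I expect to present the algebraic-group argument, with the reduction via Corollary~\ref{cor5.11} used only to note that it matches the $n$-Lie statement.
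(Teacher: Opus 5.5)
Your opening reduction is exactly the paper's proof: Corollary~\ref{cor5.11} identifies $\operatorname{Nil}(\mathcal P)$ with the nilradical of $\mathcal P_L$, and that nilradical is characteristic by \cite[Theorem~2]{Kasymov}. The argument you say you will actually present is a genuinely different route, and it is correct.

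Automorphisms carry nilpotent ideals to nilpotent ideals. Hence $\operatorname{Aut}(\mathcal P)$ lies in the closed subgroup of $GL(\mathcal P)$ stabilizing $\operatorname{Nil}(\mathcal P)$, whose Lie algebra is $\{T\mid T(\operatorname{Nil}(\mathcal P))\subseteq\operatorname{Nil}(\mathcal P)\}$. In characteristic zero, the Lie algebra of the stabilizer of the structure tensors $(\cdot,[-,\ldots,-])$ is their annihilator, i.e.\ $\operatorname{Lie}(\operatorname{Aut}(\mathcal P))=\operatorname{Der}(\mathcal P)$. So every derivation preserves $\operatorname{Nil}(\mathcal P)$.

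Your route buys generality and independence. It never uses Corollary~\ref{cor5.11}, Proposition~\ref{prop5.8}, solvability, or any $n$-Lie structure theory. It therefore shows that $\operatorname{Nil}(\mathcal P)$ is derivation-invariant for every finite-dimensional Poisson $n$-Lie algebra over an algebraically closed field of characteristic zero. The price is importing the standard algebraic-group fact. The paper's route buys brevity, given the $n$-Lie literature. However, it genuinely needs the solvability hypothesis to identify $\operatorname{Nil}(\mathcal P)$ with $\mathcal N_L$.

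Two side remarks, neither of which affects the algebraic-group argument.
\begin{itemize}
\item For non-nilpotent $D$, $\exp(tD)$ needs an analytic base field such as $\mathbb C$; passing to the algebraic closure does not make it meaningful. Over $\mathbb C$, differentiating $\exp(tD)(\operatorname{Nil}(\mathcal P))=\operatorname{Nil}(\mathcal P)$ at $t=0$ already finishes the proof.
\item Your ``elementary alternative'' is not a proof as it stands. The ideal property of $\mathcal N+D(\mathcal N)$ is automatic, since any subspace containing $\mathcal P^2$ is an ideal. The nilpotency estimate carries all the content, and you leave it open.
\end{itemize}
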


\begin{proof}
By Corollary~\ref{cor5.11}, $\operatorname{Nil}(\mathcal P)$ is the nilradical of $\mathcal P_L$. The latter is characteristic by \cite[Theorem~2]{Kasymov}. Every derivation of $\mathcal P$ is, in particular, a derivation of $\mathcal P_L$, and the assertion follows.
\end{proof}

For ordinary Poisson algebras, classical nilpotency criteria involving a nonsingular derivation or a fixed-point-free automorphism of prime order remain valid; see \cite{Amir2,Jacobson}. These criteria do not extend to $n$-Lie algebras when $n>2$ \cite[Example~3.1]{Williams}, and therefore they fail in general for Poisson $n$-Lie algebras as well.

For Lie algebras, an ideal is nilpotent if and only if it is nilpotent as a subalgebra. For $n$-Lie algebras with $n>2$, the two notions differ; see \cite{Abdurasulov,BaiR}. This motivates the following definition.

\begin{defn} An ideal $\mathcal I$ of a Poisson $n$-Lie algebra $\mathcal P$ is called a {\it hypo-nilpotent ideal} if $\mathcal I$ is nilpotent as a subalgebra but not nilpotent as an ideal. A hypo-nilpotent ideal that is not properly contained in any other hypo-nilpotent ideal is called {\it a maximal hypo-nilpotent ideal} of $\mathcal P$.
\end{defn}

For an ideal $\mathcal I$, its lower central series as a subalgebra is
$$\mathcal I^{[1]}=\mathcal I,\qquad\mathcal I^{[k+1]}=[\mathcal I^{[k]},\mathcal I,\ldots,\mathcal I]+\mathcal I^{[k]}\cdot\mathcal I,\qquad k\geq1.$$

The following example shows that hypo-nilpotent ideals need not be closed under sums.

\begin{example}
Let $4\leq n\leq m$, and let $\mathcal P$ have basis $e_1,\ldots,e_{m+1}$. For $1\leq p\leq m-n+1$ and $m-n+2\leq i,j\leq m$, define the nonzero brackets and products by
$$[e_p,e_{m-n+2},\ldots,e_m]=e_p, \qquad e_i e_j=\alpha_{ij}e_{m+1}.$$
Here $\alpha_{ij}=\alpha_{ji}\in\mathbb C$, and all products and brackets not forced by commutativity, skew-symmetry, and multilinearity are zero.

For $r=1,2$, set
$$\mathcal I_r=\operatorname{span}\{e_1,\ldots,\widehat e_{m-r},\ldots,e_{m+1}\}.$$
Both $\mathcal I_1$ and $\mathcal I_2$ are ideals. Because $\mathcal I_r$ omits one of the entries required for a nonzero bracket,
$$\mathcal I_r^{[2]}\subseteq\mathbb C e_{m+1}\qquad\text{and hence}\qquad\mathcal I_r^{[3]}=0.$$
On the other hand, the defining bracket shows that $\mathcal I_r^k\neq0$ for every $k\geq1$. Thus each $\mathcal I_r$ is hypo-nilpotent. Moreover, $\mathcal I_1+\mathcal I_2=\mathcal P$, whereas $\mathcal P$ is not nilpotent as a subalgebra. Consequently, a sum of hypo-nilpotent ideals need not be hypo-nilpotent.
\end{example}

Whenever a finite-dimensional Poisson $n$-Lie algebra contains a hypo-nilpotent ideal, finite dimensionality ensures that this ideal is contained in a maximal one. The following result relates such maximal ideals to the nilradical.

\begin{pro}\label{prop5.12}
Let $\mathcal P$ be a finite-dimensional solvable Poisson $n$-Lie algebra over an algebraically closed field of characteristic zero. Then $\operatorname{Nil}(\mathcal P)$ is contained in every maximal hypo-nilpotent ideal of $\mathcal P$.
\end{pro}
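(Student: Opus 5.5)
The plan is to show that for a maximal hypo-nilpotent ideal $\mathcal M$, the sum $\mathcal M+\operatorname{Nil}(\mathcal P)$ is again a hypo-nilpotent ideal. Maximality then forces $\mathcal M+\operatorname{Nil}(\mathcal P)=\mathcal M$, that is, $\operatorname{Nil}(\mathcal P)\subseteq\mathcal M$. Put $\mathcal N=\operatorname{Nil}(\mathcal P)$ and $\mathcal S=\mathcal M+\mathcal N$.

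First, $\mathcal S$ is an ideal as a sum of ideals. It is not nilpotent as an ideal, since it contains $\mathcal M$ and $\mathcal M^k\subseteq\mathcal S^k$ for every $k$. So the only real content is that $\mathcal S$ is nilpotent as a subalgebra, i.e. $\mathcal S^{[k]}=0$ for large $k$.

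\textbf{Step 1: mixed-term estimate.} By multilinearity, $\mathcal S^{[k]}$ is a sum of iterated brackets and products whose entries come from $\mathcal M$ or $\mathcal N$. Since $\mathcal N$ is an ideal of $\mathcal P$, any such term containing at least one entry from $\mathcal N$ lies in $\mathcal N$. Moreover, it lies in a term $\mathcal N^{j}$ of the lower central series of $\mathcal N$ as an ideal, where $j-1$ is at least the number of operations applied after $\mathcal N$ first enters. This uses the inclusions $[\mathcal N^j,\mathcal P,\ldots,\mathcal P]\subseteq\mathcal N^{j+1}$ and $\mathcal N^j\cdot\mathcal P\subseteq\mathcal N^{j+1}$, which hold up to a shift in index in the spirit of Lemma~\ref{lem5.1}. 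Applying these bluntly to an ideal is too crude, since $\mathcal N^{j+1}$ is defined using $\mathcal N$ in the second slot. I will therefore use instead the series $\mathcal N^{\{j+1\}}=[\mathcal N^{\{j\}},\mathcal P,\ldots,\mathcal P]+\mathcal N^{\{j\}}\mathcal P$, which is the ideal-lower-central series of $\mathcal N$ relative to $\mathcal P$.

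\textbf{Step 2: nilpotency of $\mathcal N^{\{j\}}$ (main obstacle).} Here I invoke solvability and Corollary~\ref{cor5.11}: $\mathcal N$ is the nilradical of $\mathcal P_L$ and contains $\mathcal P^2$. I would first try to show that $\mathcal N^{\{j\}}$ becomes zero, or at least lands in $\mathcal N^{j'}$ with $j'\to\infty$. The obstacle is that $\mathcal P$ acting on $\mathcal N$ need not be nilpotent, since the action of $\mathcal P/\mathcal N$ can have nonzero eigenvalues. The fallback is to count differently. A term of depth $k$ either has at least $\lceil k/2\rceil$ operations with all entries in $\mathcal M$, placing it in $\mathcal M^{[\lceil k/2\rceil]}$, which vanishes for large $k$. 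Otherwise it has many operations in which a second entry from $\mathcal N$ is used. Each such operation raises the $\mathcal N$-exponent by one, by the second inclusion of Lemma~\ref{lem5.1} adapted to the ideal $\mathcal N$: $[\mathcal N^{a},\mathcal N^{b},\mathcal P,\ldots]\subseteq\mathcal N^{a+b}$ and $\mathcal N^a\mathcal N^b\subseteq\mathcal N^{a+b}$. Nilpotency of $\mathcal N$ then kills such terms. Making this dichotomy rigorous requires a careful induction tracking, for each term, the pair consisting of its pure-$\mathcal M$ depth and its $\mathcal N$-exponent. This is the same mixed-term induction the paper uses for $(\mathcal I+\mathcal J)^r$, and it is the step I expect to require the most care.

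\textbf{Step 3: conclusion.} With $\mathcal S^{[k]}\subseteq\mathcal M^{[\lfloor k/2\rfloor]}+\mathcal N^{\lfloor k/2\rfloor}$ established (or a similar bound), $\mathcal S$ is nilpotent as a subalgebra. Hence $\mathcal S$ is hypo-nilpotent, and maximality of $\mathcal M$ gives $\mathcal N\subseteq\mathcal M$.
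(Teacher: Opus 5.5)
Your strategy (show that $\mathcal S=\mathcal M+\operatorname{Nil}(\mathcal P)$ is again hypo-nilpotent, then use maximality) is sound, and the bound you aim for in Step~3 is true. But the proposal does not prove it.

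The decisive claim in Step~2 is that a term with at least $\lceil k/2\rceil$ operations whose new entries all lie in $\mathcal M$ belongs to $\mathcal M^{[\lceil k/2\rceil]}$. You assert this without the fact that makes it work. The series $\mathcal M^{[j]}$ only advances when the running element already lies in $\mathcal M^{[j]}$. In a general term of $\mathcal S^{[k]}$, however, the running element may start in $\mathcal N$, and the all-$\mathcal M$ operations are interleaved with operations using $\mathcal N$-entries. Nothing in your argument prevents such an $\mathcal N$-operation from moving an element of $\mathcal M^{[j]}$ out of $\mathcal M^{[j]}$. Tracking a ``pure-$\mathcal M$ depth'' in the literal sense (the all-$\mathcal M$ part of the term) is therefore the wrong bookkeeping. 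Your worry that $\mathcal P/\mathcal N$ may act non-nilpotently on $\mathcal N$ is a symptom of this. The only operators that matter are those with all entries in $\mathcal M$, and these map $\mathcal P$ into $\mathcal M$.

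The missing idea is that every $\mathcal M^{[j]}$ is an ideal of $\mathcal P$, not merely of $\mathcal M$. This follows by induction on $j$. Each $Q_X$, $X\in\bigwedge^{n-1}\mathcal P$, is a derivation of both operations and preserves $\mathcal M$, hence preserves $\mathcal M^{[j+1]}$. For $p\in\mathcal P$, $z\in\mathcal M^{[j]}$ and $m,m_i\in\mathcal M$, the Leibniz rule gives $p[z,m_2,\ldots,m_n]=[pz,m_2,\ldots,m_n]-z[p,m_2,\ldots,m_n]$, and also $p(zm)=(pz)m$; both lie in $\mathcal M^{[j+1]}$. With this, and since the $\mathcal N^a$ are ideals, an induction on $k$ gives, with the convention $\mathcal M^{[0]}=\mathcal N^0=\mathcal P$,
\[
\mathcal S^{[k]}\subseteq\sum_{j+a=k}\mathcal M^{[j]}\cap\mathcal N^{a}.
\]
The reason is that each operation raises exactly one index. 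An operation whose new entries all lie in $\mathcal M$ raises $j$ by one and keeps $a$; when $j=0$ it lands in the ideal $\mathcal M$. An operation with some new entry in $\mathcal N$ raises $a$ by one and keeps $j$. Since both series decrease, $\mathcal S^{[k]}=0$ for large $k$, and the rest of your proof goes through. The series $\mathcal N^{\{j\}}$ from Step~1 and the appeal to solvability and Corollary~\ref{cor5.11} can be dropped.

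Completed this way, your route genuinely differs from the paper's. The paper passes to $\mathcal P_L$, takes a maximal hypo-nilpotent $n$-Lie ideal $\mathcal K\supseteq\mathcal H$, and imports $\mathcal N_L\subseteq\mathcal K$ from the $n$-Lie literature. It then uses $\mathcal P\cdot\mathcal P\subseteq\mathcal P^2\subseteq\operatorname{Nil}(\mathcal P)$, a consequence of solvability, to make $\mathcal K$ a Poisson ideal. Your argument is self-contained and needs neither solvability, algebraic closure, nor characteristic zero. It also shows that every nilpotent ideal lies in every maximal hypo-nilpotent ideal.
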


\begin{proof}
Let $\mathcal H$ be a maximal hypo-nilpotent ideal of $\mathcal P$. Since $\mathcal H$ is nilpotent as a Poisson $n$-Lie subalgebra, both $\mathcal H_A$ and $\mathcal H_L$ are nilpotent as subalgebras. If $\mathcal H$ were nilpotent as an ideal of $\mathcal P_L$, Lemma~\ref{lem5.2} would imply that it is nilpotent as an ideal of $\mathcal P$, a contradiction. Thus $\mathcal H$ is a hypo-nilpotent ideal of the $n$-Lie algebra $\mathcal P_L$.

Choose a maximal hypo-nilpotent ideal $\mathcal K$ of $\mathcal P_L$ containing $\mathcal H$. The corresponding result for $n$-Lie algebras \cite{BaiR} gives
$$\mathcal N_L\subseteq\mathcal K,$$
where $\mathcal N_L$ denotes the nilradical of $\mathcal P_L$. By Corollary~\ref{cor5.11}, $\mathcal N_L=\operatorname{Nil}(\mathcal P)$. Moreover,
$$\mathcal P\cdot\mathcal K\subseteq\mathcal P\cdot\mathcal P\subseteq\mathcal P^2\subseteq\operatorname{Nil}(\mathcal P)\subseteq\mathcal K.$$
Hence $\mathcal K$ is an ideal of the Poisson $n$-Lie algebra $\mathcal P$. Its associative part is nilpotent because $\mathcal P_A$ is nilpotent, and its $n$-Lie part is nilpotent as a subalgebra. Proposition~\ref{prop5.3} shows that $\mathcal K$ is nilpotent as a Poisson $n$-Lie subalgebra. Since it is not nilpotent as an $n$-Lie ideal, it is a hypo-nilpotent ideal of $\mathcal P$. The maximality of $\mathcal H$ now gives $\mathcal K=\mathcal H$, and therefore $\operatorname{Nil}(\mathcal P)\subseteq\mathcal H$.
\end{proof}

Let $\mathcal P$ be non-abelian and satisfy the hypotheses of Proposition~\ref{prop5.12}, and let $\mathcal H$ be a maximal hypo-nilpotent ideal. Propositions~\ref{prop5.8} and \ref{prop5.12} yield
\begin{equation}\label{eq5.10}
0\neq\mathcal P^2\subseteq\operatorname{Nil}(\mathcal P)\subsetneq\mathcal H\subsetneq\mathcal P.
\end{equation}
Choose a vector-space complement $\mathcal Q$ of $\mathcal H$ in $\mathcal P$. We call $\mathcal P=\mathcal H\oplus\mathcal Q$ a solvable extension of $\mathcal H$. 

We next examine how the maximality of $\mathcal H$ constrains the inner derivations involving elements of the complement $\mathcal Q$.

\begin{thm}\label{thm5.13}
Let $\mathcal P=\mathcal H\oplus\mathcal Q$ be as above. For every nonzero $x\in\mathcal Q$, there exist $m_1,\ldots,m_{n-2}\in\mathcal H$ such that $Q_{(x,m_1,\ldots,m_{n-2})|\mathcal H}$ is not nilpotent.
\end{thm}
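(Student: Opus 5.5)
Argue by contradiction. Fix a nonzero $x\in\mathcal Q$ and suppose that for all $m_1,\ldots,m_{n-2}\in\mathcal H$ the restriction $Q_{(x,m_1,\ldots,m_{n-2})}|_{\mathcal H}$ is nilpotent. Set $\mathcal K=\mathcal H+\mathbb F x$. The goal is to show that $\mathcal K$ is a hypo-nilpotent ideal of $\mathcal P$ properly containing $\mathcal H$, which contradicts maximality.

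\emph{Step 1: $\mathcal K$ is an ideal and not a nilpotent ideal.} By \eqref{eq5.10}, $\mathcal P^2\subseteq\operatorname{Nil}(\mathcal P)\subseteq\mathcal H$. Hence $[\mathcal K,\mathcal P,\ldots,\mathcal P]+\mathcal K\cdot\mathcal P\subseteq\mathcal P^2\subseteq\mathcal H\subseteq\mathcal K$, so $\mathcal K$ is an ideal of $\mathcal P$. Since $\mathcal K\supseteq\mathcal H$ and $\mathcal H^k\neq0$ for all $k$ (the lower central series as an ideal is monotone in $\mathcal I$), $\mathcal K$ is not nilpotent as an ideal.

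\emph{Step 2: $\mathcal K$ is nilpotent as a subalgebra.} The associative part is nilpotent because $\mathcal P_A$ is nilpotent, as $\mathcal P$ is solvable. By Proposition~\ref{prop5.3}, applied to the Poisson $n$-Lie algebra $\mathcal K$ (its associative and $n$-Lie powers are exactly $\mathcal K_A$- and $\mathcal K_L$-powers as a subalgebra), it suffices to show that $\mathcal K_L$ is a nilpotent $n$-Lie algebra. I will use the Engel theorem for $n$-Lie algebras \cite{Kasymov}, as in the Engel criterion above, so I must check that every inner derivation $Q_Y|_{\mathcal K}$ with $Y=k_1\wedge\cdots\wedge k_{n-1}$, $k_i\in\mathcal K$, is nilpotent. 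Write $k_i=h_i+\lambda_i x$. By multilinearity and skew-symmetry (the term with $x$ twice vanishes), $Q_Y=Q_{h_1\wedge\cdots\wedge h_{n-1}}+\sum_i \lambda_i Q_{(x,\ldots)}$, which has the form $Q_{H}+Q_{x\wedge M}$ with $H\in\bigwedge^{n-1}\mathcal H$ and $M\in\bigwedge^{n-2}\mathcal H$. Indeed, by multilinearity, after rescaling and collecting, one gets $Y=h_1\wedge\cdots\wedge h_{n-1}+x\wedge M$ for a suitable $M$. A cleaner route: if some $\lambda_i\neq0$, change basis of the span of the $k_i$ so that $Y=c\,(x+h)\wedge h_2'\wedge\cdots\wedge h_{n-1}'$ with $h'_j\in\mathcal H$. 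Then $Q_Y$ maps $\mathcal K$ into $\mathcal H$ (since $\mathcal P^2\subseteq\mathcal H$), so nilpotency of $Q_Y|_{\mathcal K}$ reduces to nilpotency of $Q_Y|_{\mathcal H}$.

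\emph{Step 3 (main obstacle): $Q_Y|_{\mathcal H}$ is nilpotent.} If $Y\in\bigwedge^{n-1}\mathcal H$, this holds because $\mathcal H_L$ is a nilpotent subalgebra. In the mixed case $Y=(x+h)\wedge h_2'\wedge\cdots$, the operator is $Q_{x\wedge M}+Q_{h\wedge M}$ on $\mathcal H$: a sum of a nilpotent operator (by hypothesis) and one from $\mathcal H_L$. A sum of two nilpotent operators need not be nilpotent, so more is needed. My plan is to note that $x+h$ again lies outside $\mathcal H$, and that the hypothesis is a statement about $x$ only. I will therefore first replace $\mathcal Q$-representatives: the hypothesis for $x$ should imply the same for $x+h$. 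To see this, consider the Lie algebra $\mathfrak g$ of operators on $\mathcal H$ spanned by $Q_{x\wedge M}|_{\mathcal H}$ and $Q_{H}|_{\mathcal H}$. It is closed under commutators by the fundamental identity together with $[\mathcal P,\ldots,\mathcal P]\subseteq\mathcal H$. It is solvable, being a quotient of the inner derivation algebra of solvable $\mathcal P_L$. By Lie's theorem (over algebraically closed $\mathbb F$) it is triangularizable on $\mathcal H$, and a triangular family spanned by nilpotent generators consists of nilpotent operators, since diagonal entries are linear in the generators. Hence every element of $\mathfrak g$ is nilpotent, which gives nilpotency of $Q_Y|_{\mathcal H}$. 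This triangularization step, in particular checking that the spanning operators all have zero diagonal, is where I expect the real work to lie.

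Combining the steps, $\mathcal K\supsetneq\mathcal H$ is hypo-nilpotent, contradicting the maximality of $\mathcal H$.
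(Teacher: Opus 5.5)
Your proposal is correct and follows the paper's proof. Assume the contrary and enlarge $\mathcal H$ to $\mathcal H\oplus\mathbb F x$, which is an ideal because $\mathcal P^2\subseteq\mathcal H$. Solvability of the inner derivation algebra together with Lie's theorem makes all inner derivations simultaneously strictly triangular, and the $n$-Lie Engel theorem with Proposition~\ref{prop5.3} then yields a hypo-nilpotent ideal properly containing $\mathcal H$. The only differences are cosmetic: you triangularize on $\mathcal H$ rather than on $\mathcal H\oplus\mathbb F x$, and your $\mathfrak g$ is the restriction-image of a subalgebra of $\operatorname{Inn}(\mathcal P_L)$ (not literally a quotient of it), which is still solvable, so nothing changes.
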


\begin{proof}
Suppose that some nonzero $x\in\mathcal Q$ satisfies
$$Q_{(x,y_1,\ldots,y_{n-2})|\mathcal H}\quad\text{nilpotent for all }y_1,\ldots,y_{n-2}\in\mathcal H.$$
Set $\mathcal V=\mathcal H\oplus\mathbb F x$. By \eqref{eq5.10}, $\mathcal P^2\subseteq\mathcal H$, and therefore $\mathcal V$ is an ideal of $\mathcal P$.

Since $\mathcal H$ is nilpotent as an $n$-Lie subalgebra, $Q_{Y|\mathcal H}$ is nilpotent for every $Y\in\bigwedge^{n-1}\mathcal H$. In addition, every inner derivation of $\mathcal V$ maps $\mathcal V$ into $\mathcal H$. It follows that the operators
$$Q_{Y|\mathcal V},\qquad Y\in\bigwedge^{n-1}\mathcal H, \qquad Q_{(x,y_1,\ldots,y_{n-2})|\mathcal V},\qquad y_i\in\mathcal H,$$
are nilpotent.

The Lie algebra of inner derivations of the solvable $n$-Lie algebra $\mathcal V$ is solvable; see \cite[Proposition~3]{Kasymov}. Lie's theorem therefore triangularizes all these operators simultaneously. Since the displayed generators are nilpotent, they are simultaneously strictly upper triangular. Consequently, every inner derivation of $\mathcal V$ is nilpotent. The Engel theorem for $n$-Lie algebras \cite[Theorem~3]{Kasymov} implies that $\mathcal V_L$ is nilpotent.

The associative algebra $\mathcal V_A$ is nilpotent because it is a subalgebra of the nilpotent algebra $\mathcal P_A$. Proposition~\ref{prop5.3} now shows that $\mathcal V$ is nilpotent as a Poisson $n$-Lie subalgebra. It cannot be nilpotent as an ideal of $\mathcal P$, since otherwise its subideal $\mathcal H$ would also be nilpotent as an ideal. Thus $\mathcal V$ is a hypo-nilpotent ideal properly containing $\mathcal H$, contradicting the maximality of $\mathcal H$.
\end{proof}

We next consider the stronger situation in which one of these inner derivations is invertible on $\mathcal P^2$.

\begin{pro}\label{prop5.14}
Let $\mathcal P$ be a finite-dimensional complex solvable Poisson $n$-Lie algebra. Suppose that, for some $x,m_1,\ldots,m_{n-2}\in\mathcal P$, the restriction 
$$Q_{(x,m_1,\ldots,m_{n-2})|\mathcal P^2}$$
is invertible. Then $\mathcal P\cdot\mathcal P=0$.
\end{pro}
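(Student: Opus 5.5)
Set $D=Q_{(x,m_1,\ldots,m_{n-2})}$ and $\mathcal B=\mathcal P\cdot\mathcal P$. The plan is to show that $D$ maps every associative power $\mathcal P_A^k$ into $\mathcal P_A^{k+1}$. Together with the invertibility of $D$ on $\mathcal P^2$ and the nilpotency of $\mathcal P_A$, this forces $\mathcal B=0$.

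First, $\mathcal B\subseteq\mathcal P^2$, and $\mathcal P^2$ is an ideal, so $D$ maps $\mathcal P^2$ into itself; invertibility makes $D|_{\mathcal P^2}$ bijective. Since $\mathcal P$ is solvable, $\mathcal P_A$ is nilpotent, say $\mathcal P_A^{a}=0$ with $a$ minimal.

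Second, I will prove by induction on $k\geq 2$ that
\[
D(\mathcal P_A^k)\subseteq \mathcal P_A^{k+1}.
\]
By the Leibniz rule \eqref{eq2.2} and skew-symmetry, $D$ is a derivation of $\mathcal P_A$. Hence
\[
D(y_1\cdots y_k)=\sum_{i} y_1\cdots D(y_i)\cdots y_k .
\]
Each summand is a product of $k-1$ elements of $\mathcal P$ with $D(y_i)\in[\mathcal P,\ldots,\mathcal P]$. This alone does not give $\mathcal P_A^{k+1}$. Instead, for $k\ge2$ each summand lies in $\mathcal P_A^{k-1}\cdot\mathcal P\subseteq \mathcal P_A^k$, so $D$ preserves every $\mathcal P_A^k$ with $k\geq2$.

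Third, I use that $\mathcal P_A^k\subseteq\mathcal P^2$ for $k\ge2$ and that each $\mathcal P_A^k$ is $D$-invariant. It follows that $D$ restricts to an injective, hence bijective, map on each finite-dimensional space $\mathcal P_A^k$, and so induces bijections on the successive quotients $\mathcal P_A^k/\mathcal P_A^{k+1}$.

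The key step, and the main obstacle, is to show that $D$ induces the zero map on $\mathcal B/\mathcal P_A^3=\mathcal P_A^2/\mathcal P_A^3$. Bijectivity on that quotient then forces $\mathcal P_A^2=\mathcal P_A^3$. Nilpotency gives $\mathcal P_A^2=\mathcal P_A^3=\cdots=\mathcal P_A^a=0$, so $\mathcal P\cdot\mathcal P=0$.

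To get the vanishing on $\mathcal P_A^2/\mathcal P_A^3$, compute
\[
D(yz)=yD(z)+zD(y)\in \mathcal P\cdot[\mathcal P,\ldots,\mathcal P].
\]
So I need $\mathcal P\cdot[\mathcal P,\ldots,\mathcal P]\subseteq\mathcal P_A^3$. If this inclusion is not directly available, I would first treat the top piece. Pick the largest $k\ge2$ with $\mathcal P_A^k\ne0$; then $\mathcal P_A^{k+1}=0$, so $\mathcal P_A^k$ is contained in the annihilator $\mathcal C$. For $c=y_1\cdots y_k\in\mathcal P_A^k$, expand $D(c)=\sum_i y_1\cdots D(y_i)\cdots y_k$.

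Next I would use Lemma~\ref{lem5.1}, which gives $[\mathcal P,\ldots,\mathcal P]\subseteq\mathcal P^2$ and $\mathcal P^i\cdot\mathcal P^j\subseteq\mathcal P^{i+j}$. With this I would try to show $D(\mathcal P_A^k)\subseteq\mathcal P^{k+1}$, where $\mathcal P^{k+1}$ is the Poisson lower central series. The aim is to use the filtration $\mathcal P^j\cap\mathcal P^2$, on which $D$ raises degree and is therefore nilpotent on $\mathcal P_A^k$ modulo deeper terms. The non-nilpotency of $D$ on $\mathcal P^2$ must then be absorbed by the $n$-Lie part, not by the associative part.

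Concretely, I expect to combine the Lie criterion (Theorem~\ref{thm5.5}), which gives a flag of ideals, with the observation that $D$ acts on $\mathcal P_A^k$ with eigenvalues that are sums of eigenvalues on $\mathcal P/\mathcal P^2$. Since $D$ acts on $\mathcal P/\mathcal P^2$ by zero ($D(\mathcal P)\subseteq\mathcal P^2$), all these eigenvalues vanish, so $D$ is nilpotent on $\mathcal P_A^2$. A nilpotent map that is also invertible on $\mathcal P_A^2$ forces $\mathcal P_A^2=0$.

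Making this eigenvalue argument rigorous is the hard part. It amounts to showing that on the associated graded of the filtration by $\mathcal P^j$, the image of $\mathcal P_A^2$ lives in degrees where $D$ acts through the grading-zero part.
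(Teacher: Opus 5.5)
\textbf{There is a genuine gap.} The step you yourself call the key step is never established: that $D$ induces zero on $\mathcal P_A^2/\mathcal P_A^3$, or equivalently that $D$ is nilpotent on $\mathcal P_A^2$. Neither route you sketch for it can work.

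\emph{The eigenvalue claim is false.} Since $D$ is a derivation of $\mathcal P_A$, its eigenvalues on $\mathcal P_A^2$ are sums $\lambda+\mu$ of eigenvalues of $D$ on all of $\mathcal P$, not on $\mathcal P/\mathcal P^2$. The eigenvalues on $\mathcal P^2$ are nonzero precisely by hypothesis. In fact, everything your argument uses is satisfied by a simple algebra with $\mathcal P_A^2\neq0$. You use four facts: $D$ is a derivation of a nilpotent $\mathcal P_A$; $D(\mathcal P)\subseteq\mathcal P^2$; $\mathcal P_A^2\subseteq\mathcal P^2$; and $D|_{\mathcal P^2}$ is bijective. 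Take $\mathcal P_A=\mathbb C s\oplus t\,\mathbb C[t]/(t^3)$, with $s$ annihilating everything. Let $\operatorname{span}\{t,t^2\}$ play the role of $\mathcal P^2$, and set $D(s)=0$, $D(t)=t$, $D(t^2)=2t^2$. All four facts hold, yet $\mathcal P_A^2=\mathbb C t^2\neq0$.

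\emph{The filtration idea also fails.} It is true that $D(\mathcal P^j)\subseteq\mathcal P^{j+1}$. But together with the bijectivity of $D$ on the invariant subspaces $\mathcal P^j\subseteq\mathcal P^2$, this forces $\mathcal P^2=\mathcal P^3=\cdots$. So there are no deeper terms for $D$ to push into.

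Note also that your opening plan, $D(\mathcal P_A^k)\subseteq\mathcal P_A^{k+1}$, was already abandoned in your second step.

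\emph{The missing idea.} You need the structure that distinguishes $D=Q_{(x,m)}$ from an arbitrary derivation of $\mathcal P_A$. First, $D(x)=0$. Second, the Leibniz rule holds in the slot occupied by $x$, that is, $Q_{(yz,m)}=P_yQ_{(z,m)}+P_zQ_{(y,m)}$. The paper's proof uses exactly this, in four steps:
\begin{itemize}
\item $D(x^2)=2xD(x)=0$, so $x^2=0$ by injectivity on $\mathcal P^2$.
\item Then $0=Q_{(x^2,m)}=2P_xD$ on $\mathcal P^2$, and surjectivity of $D|_{\mathcal P^2}$ gives $x\mathcal P^2=0$.
\item Next, $D(xa)=xD(a)=0$ because $D(a)\in\mathcal P^2$, so $x\mathcal P=0$.
\item Finally, $0=Q_{(xb,m)}(a)=xQ_{(b,m)}(a)+bD(a)=bD(a)$ for all $a,b$. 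Hence $D(ab)=0$, and injectivity on $\mathcal P^2$ gives $ab=0$.
\end{itemize}
No eigenvalue, flag, or filtration argument is needed, and solvability is not used at all.
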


\begin{proof}
Write $m=(m_1,\ldots,m_{n-2})$ and $D=Q_{(x,m)}$. Since $D(x)=0$, the Leibniz identity gives
$$D(x^2)=2xD(x)=0.$$
As $x^2\in\mathcal P^2$ and $D|_{\mathcal P^2}$ is injective, we obtain $x^2=0$.

For every $v\in\mathcal P^2$, the Leibniz identity in the first entry of the bracket gives
$$2xD(v)=Q_{(x^2,m)}(v)=0.$$
The surjectivity of $D|_{\mathcal P^2}$ therefore implies $x\mathcal P^2=0$. If $a\in\mathcal P$, then $xa\in\mathcal P^2$ and 
$$D(xa)=D(x)a+xD(a)=0,$$
because $D(a)\in[\mathcal P,\ldots,\mathcal P]\subseteq\mathcal P^2$. Injectivity on $\mathcal P^2$ yields $xa=0$, so $x\mathcal P=0$.

Finally, let $a,b\in\mathcal P$. Since $xb=0$, another application of the Leibniz identity gives
$$0=Q_{(xb,m)}(a)=xQ_{(b,m)}(a)+bD(a)=bD(a),$$
where the first term vanishes because $Q_{(b,m)}(a)\in\mathcal P^2$. Similarly, $aD(b)=0$. Hence
$$D(ab)=D(a)b+aD(b)=0.$$
Since $ab\in\mathcal P^2$ and $D|_{\mathcal P^2}$ is injective, it follows that $ab=0$. Thus $\mathcal P\cdot\mathcal P=0$.
\end{proof}

\begin{rmk}
For $n=2$, an analogous statement was proved in \cite{Amir2} under the stronger assumption that $Q_x$ be invertible on $\operatorname{Nil}(\mathcal P)$. Proposition~\ref{prop5.14} requires invertibility only on the generally smaller ideal $\mathcal P^2$.
\end{rmk}

To establish the existence of an example satisfying the conditions of Proposition~\ref{prop5.14}, we first introduce the notion of a torus in an $n$-Lie algebra.

\begin{defn} A {\it torus} on an $n$-Lie algebra $\mathcal{L}$ is a commutative subalgebra of the Lie algebra $\operatorname{Der}(\mathcal {L})$ which consists of semisimple endomorphisms. A torus is said to be {\it maximal} if it is not contained strictly in any other torus. 
\end{defn}

Assume that $\mathcal L$ is finite-dimensional over an
algebraically closed field of characteristic zero.
The elements of a torus can be simultaneously diagonalized. By Mostow's conjugacy theorem for maximal completely reducible
subgroups \cite{Mostow}, together with the standard conjugacy
theorem for maximal tori in linear algebraic groups, any two
maximal tori $\mathcal T_1$ and $\mathcal T_2$ of $\mathcal L$
are conjugate under $\operatorname{Aut}(\mathcal L)$. Thus all maximal tori have the same dimension, called the
\emph{rank} of $\mathcal L$ and denoted by
$\operatorname{rank}(\mathcal L)$.
The following example uses the diagonal torus of an abelian
$n$-Lie algebra.

\begin{example}
Let $n\geq3$ and $m\geq n-1$, and let $\mathcal N$ be an
$m$-dimensional abelian complex $n$-Lie algebra with basis
$\{e_1,\ldots,e_m\}$. The diagonal derivations
$$
t_i(e_j)=\delta_{ij}e_j,\qquad 1\leq i,j\leq m,
$$
span a maximal torus $\mathcal T$ of $\mathcal N$.

Consider the vector space
$
\mathcal L=\mathcal N\oplus\mathcal Q,
\quad
\mathcal Q=\operatorname{span}\{t_i\mid n-1\leq i\leq m\},
$
with the nonzero brackets
$$
[t_i,e_1,\ldots,e_{n-2},e_i]=e_i,
\qquad n-1\leq i\leq m.
$$
All other brackets of basis elements are zero, apart from
those determined by skew-symmetry.
A direct check of the Filippov identity shows that these
brackets define an $n$-Lie algebra. Moreover,
$$
\mathcal L^2=[\mathcal L,\ldots,\mathcal L]
=\operatorname{span}\{e_{n-1},\ldots,e_m\},
\qquad
[\mathcal L^2,\mathcal L^2,\mathcal L,\ldots,\mathcal L]=0.
$$
Thus $\mathcal L$ is solvable. The defining brackets also
show that $\mathcal N$ is a maximal hypo-nilpotent ideal
of $\mathcal L$ and
$
\operatorname{Nil}(\mathcal L)=\mathcal L^2.
$
For $y=\sum_{i=n-1}^{m}t_i$, the operator
$
\left.Q_{(y,e_1,\ldots,e_{n-2})}\right|_{\mathcal L^2}
$
is invertible.

Now let $\mathcal P$ be a solvable Poisson $n$-Lie algebra
such that $\mathcal P_L=\mathcal L$ and
$\mathcal P\cdot\mathcal P
\subseteq[\mathcal P,\ldots,\mathcal P].
$
Then $\mathcal P^2=\mathcal L^2$, so
$
\left.Q_{(y,e_1,\ldots,e_{n-2})}\right|_{\mathcal P^2}
$
is invertible. Proposition~\ref{prop5.14} therefore gives
$\mathcal P\cdot\mathcal P=0$.
\end{example}

We now turn to ideals arising from the associative product. The following result applies to arbitrary Poisson $n$-Lie algebras and adapts an argument from~\cite{Amir3}.

\begin{pro}
Let $\mathcal P$ be a Poisson $n$-Lie algebra over a field $\mathbb F$. For $a\in\mathcal P$ and $\lambda\in\mathbb F$, the generalized eigenspace
$$\mathcal P_{a,\lambda}=\{z\in\mathcal P\mid  (P_a-\lambda\operatorname{id}_{\mathcal P})^k(z)=0  \text{ for some }k\geq1\}$$
is an ideal of $\mathcal P$.
\end{pro}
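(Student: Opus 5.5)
We need to show that $\mathcal P_{a,\lambda}$ is closed under multiplication by arbitrary elements of $\mathcal P$ and under brackets with arbitrary elements. Write $T=P_a-\lambda\operatorname{id}_{\mathcal P}$.

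The associative part is immediate. Since $\mathcal P_A$ is commutative and associative, $P_a$ commutes with every $P_b$, and hence so does $T$. Thus if $T^k(z)=0$, then $T^k(bz)=bT^k(z)=0$, so $\mathcal P\cdot\mathcal P_{a,\lambda}\subseteq\mathcal P_{a,\lambda}$. No finite-dimensionality is needed here.

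For the $n$-Lie part, fix $x_2,\dots,x_n\in\mathcal P$ and set $D=[-,x_2,\dots,x_n]$. The Leibniz rule \eqref{eq2.2} gives
$$D(az)=aD(z)+D(a)z,$$
that is, $D\circ P_a=P_a\circ D+P_{D(a)}$. Since $D$ commutes with $\lambda\operatorname{id}$, this reads $D\circ T=T\circ D+P_c$ with $c=D(a)$, where $P_c$ commutes with $T$. Put $\delta=[D,T]=P_c$. Then $[\delta,T]=0$. The standard commutator identity
$$D T^k=\sum_{j=0}^{k}\binom{k}{j}T^{k-j}\,\mathrm{ad}_{D}^{\,j}(T)\cdots$$
is awkward to apply directly. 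It is simpler to prove by induction that
$$T^kD=DT^k-kT^{k-1}P_c,$$
which uses only that $T$ commutes with $P_c$.

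This formula alone does not close the argument. Applying it to $z$ with $T^k(z)=0$ gives $T^k(Dz)=-kP_cT^{k-1}(z)$, which still contains the nonzero vector $T^{k-1}(z)$. We need one more power of $T$, so we compute $T^{N}D$ for larger $N$. We have $T^{N}D=DT^{N}-NT^{N-1}P_c$. Applying $T^{k}$ to $z$ with $T^k z=0$ and taking $N=2k$, both terms kill $z$, because $T^{N-1}P_cz=P_cT^{N-1}z=0$ once $N-1\ge k$. Hence $T^{2k}(Dz)=0$, so $Dz\in\mathcal P_{a,\lambda}$.

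By skew-symmetry, $[\mathcal P_{a,\lambda},\mathcal P,\dots,\mathcal P]$ equals the span of the vectors $Dz$, so it lies in $\mathcal P_{a,\lambda}$. Both closures together show that $\mathcal P_{a,\lambda}$ is an ideal.

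The main point to check carefully is the inductive commutator formula $T^kD=DT^k-kT^{k-1}P_c$. It relies on $T$ commuting with $P_c$, and that follows from the commutativity and associativity of $\mathcal P_A$. In the statement, $\mathcal P_{a,\lambda}$ is a subspace, since the union of the increasing chain $\ker T^k$ is a subspace. If $\mathcal P_{a,\lambda}=0$, the statement is trivially true.
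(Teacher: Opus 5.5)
Your proof is correct and follows the paper's argument. Both use the commutator formula $T^kD=DT^k-kT^{k-1}P_c$ (which the paper writes as $T^kQ_Y=Q_YT^k-kP_{Q_Y(a)}T^{k-1}$) with a raised exponent, and both handle the associative part by the commutativity of the multiplication operators. The only difference is that the paper takes exponent $k+1$ instead of your $2k$. That already suffices, because $T^k(z)=0$ kills the correction term $(k+1)P_cT^{k}(z)$, so the formula with $k+1$ in place of $k$ does close the argument.
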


\begin{proof}
Set $T=P_a-\lambda\operatorname{id}_{\mathcal P}$. Since all multiplication operators commute,
$$T^kP_y=P_yT^k$$
for every $y\in\mathcal P$ and $k\geq1$. Hence $\mathcal P\cdot\mathcal P_{a,\lambda}\subseteq\mathcal P_{a,\lambda}$.

Let $Y\in\bigwedge^{n-1}\mathcal P$. The Leibniz identity gives the operator relation
$$TQ_Y=Q_YT-P_{Q_Y(a)}.$$
Since $P_{Q_Y(a)}$ commutes with $T$, induction on $k$ yields
\begin{equation}\label{eq5.11}
T^kQ_Y=Q_YT^k-kP_{Q_Y(a)}T^{k-1},\qquad k\geq1.
\end{equation}
If $T^k(z)=0$, then \eqref{eq5.11}, with $k+1$ in place of $k$, gives $T^{k+1}Q_Y(z)=0$. Therefore
$$[\mathcal P,\ldots,\mathcal P,  \mathcal P_{a,\lambda}]\subseteq\mathcal P_{a,\lambda},$$
and $\mathcal P_{a,\lambda}$ is an ideal.
\end{proof}

Finally, the usual Peirce argument for associative algebras extends to the present setting. Suppose that $\mathcal P$ is finite-dimensional over an algebraically closed field of characteristic zero and that $\mathcal P_A$ contains a nonzero idempotent $e$. The following consequences are proved exactly as in the binary Poisson case \cite{Amir2}:
\begin{itemize}
\item $e$ belongs to the center of $\mathcal P_L$;
\item if $\mathcal P_L$ is solvable and has trivial center, then $\mathcal P_A$ is nilpotent;
\item if $\mathcal P_L$ is solvable, then the Peirce decomposition associated with $e$ yields a split Poisson $n$-Lie algebra. In particular, if $\mathcal P_L$ is non-split, then $\mathcal P_A$ is nilpotent.
\end{itemize}

\

{\bf Funding} {{\small{\ The third author was supported by Basic Research Program of Jiangsu (BK20251784).}}}
 \\[3mm]
{\bf Data availability} {{\small{\ Data sharing is not applicable to this article as no datasets were generated or analysed during
the current study.}}}
\\[3mm]
{\bf Conflict of interest} {{\small{\ The authors have no competing interests to declare that are relevant to the content of this
article.}}}

\end{document}